\documentclass[10pt,reqno]{amsart}

\usepackage{fullpage}

\usepackage{amsmath,amsthm,amssymb}
\usepackage{hyperref}

\numberwithin{equation}{section}

\newtheorem{theorem}{Theorem}[section]
\newtheorem{proposition}[theorem]{Proposition}
\newtheorem{lemma}[theorem]{Lemma}
\newtheorem{corollary}[theorem]{Corollary}
\newtheorem{application}[theorem]{Application}
\theoremstyle{definition}

\newcommand{\R}{\mathbb R}
\newcommand{\N}{\mathbb N}
\newcommand{\Prob}{\mathbb P}
\newcommand{\E}{\mathbb E}
\newcommand{\1}{\mathbf 1}
\newcommand{\abs}[1]{\left|#1\right|}
\newcommand{\norm}[1]{\left\|#1\right\|}
\newcommand{\osc}{\operatorname{osc}}
\newcommand{\esssup}{\operatorname*{ess sup}}
\newcommand{\essinf}{\operatorname*{ess inf}}
\newcommand{\cQ}{\mathcal Q}
\newcommand{\cB}{\mathcal B}
\newcommand{\cC}{\mathcal C}
\newcommand{\cA}{\mathcal A}
\newcommand{\cI}{\mathcal I}
\newcommand{\cJ}{\mathcal J}
\newcommand{\Adm}{\operatorname{Adm}}
\newcommand{\vol}{\operatorname{vol}}

\title[Product-profile anti-concentration]{Product-profile anti-concentration for block-structured multi-affine polynomials}

\author{Evgeny Abakumov}
\address{Univ Gustave Eiffel, Univ Paris Est Creteil, CNRS, LAMA UMR 8050, F-77447 Marne-la-Vall\'ee, France}
\email{evgueni.abakoumov@univ-eiffel.fr}

\author{Omer Friedland}
\address{Institut de Math\'ematiques de Jussieu-Paris Rive Gauche (IMJ-PRG), Sorbonne Universit\'e, Campus Pierre et Marie Curie, 4 place Jussieu, Bo\^ite Courrier 247, 75252 Paris Cedex 5, France}
\email{omer.friedland@imj-prg.fr}

\author{Yosef Yomdin}
\address{Department of Mathematics, Faculty of Mathematics and Computer Science, Weizmann Institute of Science, Jacob Ziskind Building, Room 153, Rehovot, Israel}
\email{yosef.yomdin@weizmann.ac.il}

\subjclass[2020]{Primary 60E15; Secondary 26D05, 41A17, 60E05}
\keywords{multi-affine polynomial, anti-concentration, product profile, admissible block partition, polynomial image density, Remez inequality}

\dedicatory{In memory of Yosef Yomdin}

\date{}

\begin{document}

\begin{abstract}
We establish product-profile anti-concentration, density, and Remez
estimates for multi-affine polynomials on the cube. Let $X$ be uniformly distributed on $[0,1]^n$, and let $P:[0,1]^n\to\R$ be a nonconstant multi-affine
polynomial of exact degree $d$, with all variables active. For a
partition $\cB$ of the variables such that no monomial contains two
variables from the same block, define
$$
w_d(\cB) = 
\sum_{\cI\subseteq\cB, \abs{\cI} = d-1}
\left(\prod_{B\in\cI}\sqrt{\abs B}\right)
\sqrt{\sum_{C\in\cB\setminus\cI}\abs C}.
$$
We prove the all-center small-ball estimate
$$
\sup_{u\in\R}
\Prob\{\abs{P(X)-u}\le\rho\} \le
\Phi_d\left(
C_dw_d(\cB)\frac{\rho}{\osc(P)}
\right),
$$
where $\osc(P)$ is the length of the range of $P$ and
$\Phi_d(t) = t\sum_{j = 0}^{d-1}\log^j(1/t)/j!$ on $(0,1]$, capped at one.
Every multi-affine polynomial admits the singleton partition, which
yields the universal scale $n^{d-1/2}$; an admissible partition into at
most $q$ blocks yields the improved scale
$q^{(d-1)/2}n^{d/2}$. If $\cB$ has exactly $d$ blocks, then
$$
w_d(\cB) = d\prod_{B\in\cB}\sqrt{\abs B},
$$
and products of centered block averages show that both the small-ball
profile and the dependence on the full block-size vector are optimal,
up to constants depending only on $d$.

We also prove that $P(X)$ has a density $f_P$ satisfying
$$
\norm{f_P}_{L^p(\R)} \le
C_dp^{d-1}
\left(
\frac{w_d(\cB)}{\osc(P)}
\right)^{1-1/p},
\quad 1<p<\infty,
$$
with matching $p$- and block-scale growth for $p\ge2$ on the
block-product models. As an application, we derive
translation-invariant quotient Remez inequalities with the same
structural scale. The proof combines weighted block selection, affine
cube slicing, degree-lowering contractions, and the exact recursion
underlying the product profile.
\end{abstract}

\maketitle

\section{Introduction} \label{sec:introduction}

\subsection{The problem}

Let $\mu_n$ denote normalized Lebesgue measure on $[0,1]^n$. Let
$X = (X_1,\ldots,X_n)$ be a random point sampled uniformly from the cube,
meaning that
$$
\Prob\{X\in A\} = \mu_n(A)
$$
for every Borel set $A\subseteq[0,1]^n$; we write $X\sim\mu_n$.
For a nonconstant polynomial $P:[0,1]^n\to\R$, the notation $P(X)$
denotes the real-valued random variable obtained by evaluating $P$ at
$X$. We seek upper bounds for the concentration function
\begin{align*}
\cQ_P(\rho) & = 
\sup_{u\in\R}\Prob\{\abs{P(X)-u}\le\rho\} \\
& = \sup_{u\in\R}
\mu_n\{x\in[0,1]^n:\abs{P(x)-u}\le\rho\},
\quad \rho\ge0,
\end{align*}
in terms of the relative radius $\rho/\osc(P)$, where
$$
\osc(P) = \max_{x\in[0,1]^n}P(x) - \min_{x\in[0,1]^n}P(x).
$$

For arbitrary degree-$d$ polynomials, the natural general estimates have a power profile of order $t^{1/d}$. Products behave differently: if $U_1,\ldots,U_d$ are independent and uniform on $[0,1]$, then
$$
\Prob\{U_1\cdots U_d\le t\} = \Phi_d(t),\quad \Phi_d(t) = t\sum_{j = 0}^{d-1}\frac{\log^j(1/t)}{j!},\quad 0<t\le1.
$$
We set $\Phi_d(0) = 0$ and $\Phi_d(t) = 1$ for $t\ge1$.
Thus
\begin{align} \label{eq:Phi-asymp-intro}
\Phi_d(t)\sim\frac{t\log^{d-1}(1/t)}{(d-1)!}\quad(t\downarrow0).
\end{align} The question addressed here is whether this product profile persists for a broad class of nonhomogeneous multi-affine polynomials, and whether the relevant scale can be expressed in terms of their block structure and global oscillation.

\subsection{History and context}

Remez inequalities control a polynomial on a full set by its size on a subset of positive measure. Their multivariate form, due to Brudnyi and Ganzburg, applies to arbitrary polynomials on convex bodies \cite{BrudnyiGanzburg73}. In probability, Nazarov, Sodin, and Volberg developed a dimension-free geometric approach yielding sharp two-sided distribution inequalities for polynomials on convex bodies, normalized by a fixed quantile, together with related moment estimates \cite{NazarovSodinVolberg03}. They note that their upper-tail estimate is essentially due to Bourgain. Closely related dimension-free distribution and $L^q$-norm inequalities were developed by Carbery and Wright under general log-concave measures, including Banach-valued polynomials \cite{CarberyWright01}.

For polynomials of bounded individual degree, logarithmically corrected linear behavior was already known in other settings. Kosov proved such regularity for Gaussian polynomial images under restrictions on the powers of the variables \cite{KosovIndividualDegree22}. In published work, Glazer and Mikulincer proved all-center anti-concentration under an aggregate coefficient normalization, with the generic power profile \cite{GlazerMikulincer22}. Hu and Paouris obtained logarithmically corrected small-ball estimates for projections of simple random tensors, including an overlap with homogeneous scalar block-multilinear forms \cite{HuPaouris26}. Published work of Kosov gives fractional Besov regularity for polynomial images under much broader measure-theoretic assumptions \cite{Kosov18,Kosov22}.

Two recent preprints are particularly relevant. Kosov's product-measure preprint yields the product-order profile for multi-affine polynomials under a maximum top-degree coefficient normalization \cite{Kosov25}. The recent preprint of Glazer and Mikulincer treats $L^p$ balls and symmetric measures \cite{GlazerMikulincer26}.

The product profile is therefore not new in isolation. The point of the present paper is to combine it with global oscillation normalization, an explicit weighted scale attached to an admissible block partition, matching unequal-block examples, finite-$p$ density bounds with the same scale, and a quotient-form Remez application.

\subsection{Main results}

Let $\N = \{1,2,\ldots\}$ and $[n] = \{1,\ldots,n\}$. A polynomial $P:[0,1]^n\to\R$ is \emph{multi-affine} if it is affine in each coordinate separately. Equivalently,
\begin{align} \label{eq:expansion}
P(x) = \sum_{S\subseteq[n]}a_Sm_S(x),\quad m_S(x) = \prod_{i\in S}x_i,
\end{align}
with $m_\varnothing = 1$. Its degree is
$$
\deg(P) = \max\{\abs S:a_S\ne0\}.
$$
A variable is \emph{active} if $P$ depends on it. In the main statements all variables are active, so $n$ is both the
dimension of the cube and the number of active variables, while
$d=\deg(P)$ is the actual degree. Constants denoted by $c_d,C_d$ are positive and depend only on $d$; their values may change from line to line and are not optimized. We write $A\lesssim_dB$ if $A\le C_dB$, and $A\asymp_dB$ if both $A\lesssim_dB$ and $B\lesssim_dA$. The notation $\norm P_\infty$ means the supremum norm on the relevant cube.

A partition $\cB$ of the active variables is \emph{admissible for $P$} if
$$
\abs{S\cap B}\le1
$$
for every $B\in\cB$ and every $S$ with $a_S\ne0$. We write $\Adm(P)$ for the set of admissible partitions. Equivalently, after the coordinates outside $B$ are fixed, the dependence on the whole vector $x_B$ is affine. Every multi-affine polynomial admits the singleton partition. Nontrivial examples need not be products: the polynomial $x_1x_2+x_3x_4$ admits $\{\{1,3\},\{2,4\}\}$, whereas $\{\{1,2\},\{3,4\}\}$ is not admissible.

The block-deletion recursion in the proof leads to a weighted sum over choices of $r-1$ blocks, with each term recording the product of their square-root sizes and the square-root size of the remaining coordinates. For a finite partition $\cC$ and $1\le r\le\#\cC$, define
\begin{align} \label{eq:w-scale}
w_r(\cC) = \sum_{{\cI\subseteq\cC, \abs{\cI} = r-1}}\left(\prod_{B\in\cI}\sqrt{\abs B}\right)\sqrt{\sum_{C\in\cC\setminus\cI}\abs C}.
\end{align}
If $P$ has exact degree $d$ and $\cB\in\Adm(P)$, then $\#\cB\ge d$, so $w_d(\cB)$ is defined.

The scale interpolates between several familiar regimes. For the
singleton partition,
$$
w_d(\cB) = \binom{n}{d-1}\sqrt{n-d+1} \asymp_d n^{d-1/2}.
$$
If $\cB$ has at most $q$ blocks, then
$$
w_d(\cB)\lesssim_d q^{(d-1)/2}n^{d/2},
$$
while for a partition into exactly $d$ blocks,
$$
w_d(\cB) = d\prod_{B\in\cB}\sqrt{\abs B}.
$$

\begin{theorem}[Small-ball inequality] \label{thm:main-smallball}
Let $P:[0,1]^n\to\R$ be a nonconstant multi-affine polynomial of exact degree $d$, assume that all variables are active, and let $\cB\in\Adm(P)$. Then, for every $\rho\ge0$,
\begin{align} \label{eq:main-weighted}
\cQ_P(\rho)\le\Phi_d\left(C_dw_d(\cB)\frac{\rho}{\osc(P)}\right).
\end{align}
\end{theorem}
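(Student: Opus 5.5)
The argument will be an induction on the degree $d$, following the "block-deletion recursion" alluded to before \eqref{eq:w-scale}. The target is a statement of the form
$$
\cQ_P(\rho)\le \Phi_d(C_d w_d(\cB)\rho/\osc(P)),
$$
proved simultaneously for all admissible partitions.

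\emph{Base case $d=1$.} Here $P(x)=a_\varnothing+\sum_i a_ix_i$ and $\osc(P)=\sum_i\abs{a_i}$. Since $w_1(\cB)=\sqrt n$, the claim is that a sum of independent uniform variables has concentration at most $C\sqrt n\,\rho/\sum\abs{a_i}$. To see this, write $\sum\abs{a_i}\le\sqrt n(\sum a_i^2)^{1/2}$ and use that a weighted sum of uniforms has density at most $C/\max_i\abs{a_i}$. Alternatively, use the standard bound that the density of $\sum a_iU_i$ is at most $C/\norm a_2$ (Ball's cube slicing). Cube slicing gives exactly $\sqrt{\sum a_i^2}\ge\osc/\sqrt n$, so the base case follows.

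\emph{Inductive step.} Fix a block $B\in\cB$. By admissibility,
$$
P(x)=Q_B(x_{B^c})+\sum_{i\in B}x_iR_i(x_{B^c}),
$$
where $Q_B$ and the $R_i$ do not depend on $x_B$. Conditionally on $x_{B^c}$, the variable $P$ is affine in $x_B$. By the base case (affine cube slicing), its conditional concentration is at most $C\sqrt{\abs B}\rho/\sum_{i\in B}\abs{R_i(x_{B^c})}$. The functions $R_i$, and more usefully a contraction such as $L_B=\sum_{i\in B}\epsilon_iR_i$ with signs chosen suitably, are multi-affine of degree $\le d-1$. They are admissible for the partition $\cB\setminus\{B\}$ restricted to the remaining variables. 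Applying the inductive hypothesis to the degree-lowered contraction $L_B$ bounds $\Prob\{\abs{L_B}\le s\}\le\Phi_{d-1}(C w_{d-1}(\cB\setminus\{B\})s/\osc(L_B))$. Integrating the conditional bound $\min(1,C\sqrt{\abs B}\rho/\abs{L_B})$ against this distribution is the exact recursion for products. Indeed, $\int\min(1,t/y)\,d\Phi_{d-1}(y)$ gives $\Phi_d(t)$, since $\Phi_d$ is the distribution function of $U_d\cdot(U_1\cdots U_{d-1})$. The resulting bound is
$$
\Phi_d\left(C\sqrt{\abs B}\,w_{d-1}(\cB\setminus\{B\})\,\rho/\osc(L_B)\right).
$$

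\emph{Weighted block selection.} To close the induction we need a block $B$ with
$$
\osc(L_B)\gtrsim_d \osc(P)\,\frac{\sqrt{\abs B}\,w_{d-1}(\cB\setminus\{B\})}{w_d(\cB)}.
$$
The identity $\sum_{B}\sqrt{\abs B}\,w_{d-1}(\cB\setminus\{B\})\asymp_d w_d(\cB)$ holds up to combinatorial factors; it is essentially how \eqref{eq:w-scale} decomposes. So it suffices to show that $\sum_B\osc(L_B)\gtrsim_d\osc(P)$, with $L_B$ taken with a good sign choice, and then pick $B$ by averaging with weights. Lower-degree terms of $P$ need care, since they do not appear in every contraction. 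One route is to consider the oscillation of $P$ along the path between a maximizer and a minimizer vertex, changing coordinates block by block. The total change $\osc(P)$ then splits into increments of the form $\sum_{i\in B}(y_i-x_i)R_i(\cdot)$, and each increment is bounded by $\osc$ of a contraction of $R$'s with signs $\epsilon_i=\pm1$. This bounds $\osc(P)\le\sum_B\max_{\epsilon}\norm{L_{B,\epsilon}}_\infty$. Here $\norm{\cdot}_\infty$ rather than $\osc$ is what appears. This is harmless: the inductive claim can be stated for $\cQ$ with a center $u$, and one can apply it around $0$ using $\norm{L}_\infty\le\osc(L)+\abs{L(x_0)}$. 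Alternatively, carry the induction with the normalization $\norm{L}_\infty$ for the event $\{\abs L\le s\}$. This works because the conditional bound only involves the small-ball event of $L_B$ at center $0$.

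\emph{Main obstacle.} I expect the main difficulty to be this selection step. It requires matching the weights exactly to $w_d$, while handling the fact that the degree-lowered contractions $L_B$ may not have exact degree $d-1$ or may lose active variables. To handle this, I would first prove the inductive statement with $w_{d-1}$ monotone under passing to sub-partitions and subsets of variables. The constants $C_d$ would be allowed to absorb the combinatorial losses, and $\Phi_{d-1}\le\Phi_d$ would handle any degree drop.
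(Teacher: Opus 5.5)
Your proposal is correct and is essentially the paper's proof. It uses induction on the degree through one fixed sign contraction $Q_{B,w}$, selected by a weighted block-by-block telescoping path with weights $\sqrt{\abs B}$ times the residual scale of $\cB\setminus\{B\}$. The block fiber is handled by Ball's slicing through $\norm{a_B(y)}_2\ge\abs{Q_{B,w}(y)}/\sqrt{\abs B}$; use the slicing route here, since your $1/\max_i\abs{a_i}$ suggestion only gives scale $n$. The contraction is controlled by a zero-centered, $\norm{\cdot}_\infty$-normalized induction hypothesis, and the integration closes through $\int_0^1\Phi_{d-1}(s/t)\,dt=\Phi_d(s)$.

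The paper differs only in packaging. It runs the induction with a recursive scale $a_k(\cC)$ that is monotone in $k$ and under restriction of partitions. Its extra $+1$ absorbs the $1/(1-\rho)$ loss from telescoping between a point of $\{\abs P\le\rho\}$ and an extremal vertex; this is the role of your $\norm L_\infty\le\osc(L)+\abs{L(x_0)}$ with $x_0$ in the sublevel set. Only at the end does it compare $a_k\le C_kw_k$, using the deletion identity you cite together with $w_j\le j\,w_{j+1}$. The latter inequality is what handles contractions whose degree drops by more than one.
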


The second result identifies the profile and the complete block-size scale when the partition has the minimal possible number of blocks. Let $\mathbf m = (m_1,\ldots,m_d)\in\N^d$. Choose disjoint blocks $B_1,\ldots,B_d$ with $\abs{B_k} = m_k$, and put
$$
N_{\mathbf m} = m_1+\cdots+m_d,\quad M_{\mathbf m} = \prod_{k = 1}^d\sqrt{m_k},
$$
and define
\begin{align} \label{eq:unequal-product}
P_{\mathbf m,d}(x) = \prod_{k = 1}^d\left(\frac2{m_k}\sum_{i\in B_k}x_i-1\right),\quad x\in[0,1]^{N_{\mathbf m}}.
\end{align}

\begin{theorem}[Block-product models] \label{thm:block-products}
For every $d\ge1$, there are constants $c_d,C_d,s_d>0$, depending only on $d$, such that, for every $\mathbf m\in\N^d$ and $0\le s\le s_d$,
\begin{align} \label{eq:block-products-two-sided}
\Phi_d(c_ds)\le\Prob\left\{\abs{P_{\mathbf m,d}(X)}\le\frac{s}{M_{\mathbf m}}\right\}\le\Phi_d(C_ds),
\end{align}
where $X$ is uniform on $[0,1]^{N_{\mathbf m}}$. The upper bound holds for every $s\ge0$, and
\begin{align} \label{eq:block-product-osc}
\norm{P_{\mathbf m,d}}_\infty = 1,\quad \osc(P_{\mathbf m,d}) = 2.
\end{align}
The scale $M_{\mathbf m}$ is optimal for prescribed admissible $d$-block partitions in the following sense. Suppose that $\Gamma:\N^d\to(0,\infty)$ and $K_d>0$ satisfy
$$
\cQ_P(\rho)\le\Phi_d\left(K_d\Gamma(\mathbf m)\frac{\rho}{\osc(P)}\right)
$$
for every $\mathbf m\in\N^d$, every exact degree-$d$ multi-affine polynomial $P:[0,1]^{N_{\mathbf m}}\to\R$ with all variables active and with a prescribed admissible $d$-block partition of sizes $m_1,\ldots,m_d$, and every $\rho\ge0$. Then
$$
\Gamma(\mathbf m)\ge c_dK_d^{-1}M_{\mathbf m},\quad \mathbf m\in\N^d,
$$
where $c_d>0$ depends only on $d$.

Moreover, fix $q\ge d$. If constants $K_{d,q}$ and $\alpha\ge0$ satisfy
\begin{align} \label{eq:hypothetical-alpha}
\cQ_P(\rho)\le\Phi_d\left(K_{d,q}n^\alpha\frac{\rho}{\osc(P)}\right)
\end{align}
for every exact degree-$d$ multi-affine polynomial with $n$ active variables and an admissible partition into at most $q$ blocks, then
\begin{align} \label{eq:alpha-barrier}
\alpha\ge\frac d2.
\end{align}
\end{theorem}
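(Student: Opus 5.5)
The plan is to write $P_{\mathbf m,d}(X)=\prod_{k=1}^dY_k$ with $Y_k=\frac2{m_k}\sum_{i\in B_k}X_i-1$. The $Y_k$ are independent, symmetric and take values in $[-1,1]$. Then $\norm{P_{\mathbf m,d}}_\infty=1$ (attained at vertices) and, by symmetry, $\osc=2$, which gives \eqref{eq:block-product-osc}.

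\textbf{Rescaling.} Put $Z_k=\sqrt{m_k}\,\abs{Y_k}$. The density of $Y_k$ is an Irwin--Hall density rescaled by $m_k/2$, hence bounded by $C\sqrt{m_k}$. So $Z_k$ has density bounded by an absolute constant $C$ on $[0,\sqrt{m_k}]$. In the other direction, this density is at least $c>0$ on $[0,\min(1,\sqrt{m_k})]$: by the local CLT for large $m_k$, and by direct inspection of the Irwin--Hall density near its center for small $m_k$. The event in question is
$$
\Bigl\{\prod_k Z_k\le s\Bigr\}.
$$

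\textbf{Upper bound.} Each $Z_k$ is stochastically larger than $U_k/C$ with $U_k$ uniform on $[0,1]$. Indeed,
$$
\Prob\{Z_k\le t\}\le\min(Ct,1)=\Prob\{U_k/C\le t\}.
$$
By independence the product is stochastically larger too, so $\Prob\{\prod Z_k\le s\}\le\Prob\{\prod U_k\le C^ds\}=\Phi_d(C^ds)$ for all $s\ge0$.

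\textbf{Lower bound.} Let $a=\min(1,1/\sqrt2)$; one can check $\sqrt{m_k}\ge1$ always, so the lower density bound holds on $[0,a]$. On $[0,a]$, $Z_k$ has density at least $c$. Hence
$$
\Prob\Bigl\{\prod Z_k\le s\Bigr\}\ge c^da^d\,\Prob\Bigl\{\prod(aU_k)\le s\Bigr\}= (ca)^d\Phi_d(s/a^d)\ge\Phi_d(c_ds).
$$
The last step uses that for $t\le s_d$ small we have $\Phi_d(\lambda t)\ge\lambda\Phi_d(t)$ when $\lambda<1$ (since $t\log^j(1/t)$ is increasing near $0$), together with a rescaling of the argument. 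The main obstacle in the whole theorem is this uniform two-sided density control for $Z_k$ across all $m_k$, including $m_k=1,2$; I would handle it via the explicit piecewise-polynomial Irwin--Hall density near the center plus the Berry--Esseen/local CLT bound for large $m_k$.

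\textbf{Optimality of $M_{\mathbf m}$.} Apply the hypothesis to $P=P_{\mathbf m,d}$, using the block partition $\{B_k\}$, which is admissible. Take $\rho=s/M_{\mathbf m}$ with $s\le s_d$ small, so that $\Phi_d(c_ds)\le\Phi_d(K_d\Gamma s/(2M_{\mathbf m}))$. Since $\Phi_d$ is strictly increasing on $[0,1]$, either the right argument is at least $1$, which already gives $\Gamma\gtrsim M_{\mathbf m}/(sK_d)\ge M_{\mathbf m}/K_d$, or $K_d\Gamma/(2M_{\mathbf m})\ge c_d$. In both cases $\Gamma\ge c_dK_d^{-1}M_{\mathbf m}$.

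\textbf{The barrier $\alpha\ge d/2$.} Take $m_k=m$ for all $k$, so $n=dm$ and $M=m^{d/2}$; the block partition has $d\le q$ blocks. The previous argument gives
$$
K_{d,q}(dm)^\alpha\ge c_dm^{d/2}
$$
for all $m$. Letting $m\to\infty$ forces $\alpha\ge d/2$.
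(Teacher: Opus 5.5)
Your proposal is correct in substance and follows the paper's outline. You write $P_{\mathbf m,d}(X)$ as a product of independent rescaled centered block averages, bound each factor's density above and below near the origin, and compare with a product of uniforms.

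\textbf{Upper bound.} This is essentially the paper's argument. Your bound $C\sqrt{m_k}$ on the density of $Y_k$ is what Ball's slicing gives in Proposition~\ref{prop:affine-slicing}, so $C=\sqrt2$ works for $Z_k$. Cite that, or a Fourier estimate, rather than just invoking Irwin--Hall. Your stochastic domination of $\prod_kZ_k$ by $C^{-d}\prod_kU_k$ is the case $d_j=1$ of Lemma~\ref{lem:tensorization}.

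\textbf{Lower bound.} Here your route differs: you need a lower bound on the density of $\sqrt{m_k}\,Y_k$ near $0$, uniformly in $m_k$. The paper uses no limit theorem. It notes that $\sqrt{m_k}\,Y_k$ is even, log-concave by Pr\'ekopa, and has variance $1/3$. Lemma~\ref{lem:central-density} then shows every such density is at least $1/6$ on $[-1/64,1/64]$, covering all $m_k$ at once with explicit constants. Your route is also valid: the local limit theorem for $m_k\ge m_0$, then positivity and continuity of the finitely many Irwin--Hall densities on $[-1,1]$ for $m_k<m_0$. Its costs are non-explicit constants and the need for the \emph{density} form of the local CLT. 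Berry--Esseen alone controls distribution functions, not pointwise densities. The density version applies here because the characteristic function of a sum of $m\ge2$ uniforms is integrable.

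\textbf{Optimality.} Your argument is slightly more direct than the paper's. Since $c_ds<1$, strict monotonicity of $\Phi_d$ on $[0,1]$ at one fixed $s$ already forces $K_d\Gamma(\mathbf m)/(2M_{\mathbf m})\ge c_d$. The paper instead divides by $\Phi_d(s)$ and lets $s\downarrow0$ using \eqref{eq:Phi-ratio}.

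\textbf{Two small repairs.} First, your justification of $(ca)^d\Phi_d(s/a^d)\ge\Phi_d(c_ds)$ points the wrong way. Concavity gives $\Phi_d(\lambda t)\ge\lambda\Phi_d(t)$ for $\lambda<1$, but you need $\Phi_d(c_dt)\le\lambda\Phi_d(t)$. That follows from \eqref{eq:Phi-ratio}, or explicitly as follows. If $0<\mu<1$ and $0<t\le\mu$, then $\log(1/(\mu t))\le2\log(1/t)$, hence $\Phi_d(\mu t)\le2^{d-1}\mu\Phi_d(t)$. Take $\mu=2^{1-d}(ca)^d$ and use $\Phi_d(s/a^d)\ge\Phi_d(s)$.

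Second, to apply the hypothesis to $P_{\mathbf m,d}$ you must check that it lies in the hypothesis class. Each monomial choosing one variable from every block has coefficient $\prod_k(2/m_k)\ne0$. So $P_{\mathbf m,d}$ has exact degree $d$ and all its variables are active.
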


The same weighted contraction argument controls the complete image density.

\begin{theorem}[Density estimate] \label{thm:main-density}
Under the assumptions of Theorem~\ref{thm:main-smallball}, the random
variable $P(X)$ has an absolutely continuous distribution. Denote its
density by $f_P$. Then, for every $1<p<\infty$,
\begin{align} \label{eq:main-density}
\norm{f_P}_{L^p(\R)}\le C_dp^{d-1}\left(\frac{w_d(\cB)}{\osc(P)}\right)^{1-1/p}.
\end{align}
For the block products in \eqref{eq:unequal-product}, let $f_{\mathbf m,d}$ be the density of $P_{\mathbf m,d}(X)$ for $X\sim\mu_{N_{\mathbf m}}$. Then
$$
\norm{f_{\mathbf m,d}}_{L^p(\R)}\asymp_dp^{d-1}M_{\mathbf m}^{1-1/p},\quad p\ge2,
$$
uniformly over $\mathbf m\in\N^d$; see Proposition~\ref{prop:block-density}.
\end{theorem}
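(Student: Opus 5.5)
The upper bound \eqref{eq:main-density} will follow from a \emph{set version} of Theorem~\ref{thm:main-smallball}, combined with a layer-cake computation. For the two-sided estimate on the block products we defer to Proposition~\ref{prop:block-density}.

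\emph{Absolute continuity.} $P$ is a nonconstant polynomial, so $\nabla P$ vanishes only on a proper algebraic subset of $[0,1]^n$, which is Lebesgue-null. By the coarea formula, the pushforward of $\mu_n$ under $P$ is absolutely continuous. We write $\nu$ for the law of $P(X)$, $f_P$ for its density, and
\[
a=C_dw_d(\cB)/\osc(P).
\]

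\emph{Key step: set version of the small-ball bound.} We will show that
\[
\nu(E)\le\Phi_d\bigl(a\,\abs E\bigr)
\]
holds for every Borel set $E\subseteq\R$, and not only for intervals. Our plan is to rerun the argument behind Theorem~\ref{thm:main-smallball} (weighted block selection, affine cube slicing, degree-lowering contraction) with intervals replaced by Borel sets.
\begin{itemize}
\item[(i)] \emph{Base case.} After conditioning, $P$ is affine in a block linear form $\ell(x_B)$. The conditional law of $P$ is then a rescaled copy of the law of $\ell(X_B)$. By the slicing estimate, that law has a bounded density, with bound controlled by $\sqrt{\abs B}$ over the coefficient size. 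Since the bound is on a density, it applies to arbitrary sets, not just intervals.
\item[(ii)] \emph{Recursion.} The recursion only integrates conditional probabilities against the distribution of the lower-degree coefficient polynomial. The exact product recursion for $\Phi_d$ then goes through verbatim.
\end{itemize}
We expect this step to be the main obstacle. One has to check that no step of the earlier proof uses convexity or connectedness of the target set, for instance through a choice of centre $u$. If some step does, we would try to replace it with the bound $\abs{(E-u)/c}=\abs E/\abs c$ for the affine pieces.

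\emph{Layer cake for $p\ge2$.} Let $E_\lambda=\{f_P>\lambda\}$. Since $\lambda\abs{E_\lambda}\le\nu(E_\lambda)\le\Phi_d(a\abs{E_\lambda})$, we get the following.
\begin{itemize}
\item[(i)] For $\lambda\ge C a$, set $t=a\abs{E_\lambda}$. Then $\lambda/a\le\Phi_d(t)/t=\sum_{j<d}\log^j(1/t)/j!$, which forces
\[
\abs{E_\lambda}\le a^{-1}\exp\bigl(-c_d(\lambda/a)^{1/(d-1)}\bigr).
\]
\item[(ii)] For $\lambda<Ca$ we use $\abs{E_\lambda}\le1/\lambda$.
\end{itemize}
Hence
\[
\norm{f_P}_p^p=p\int_0^\infty\lambda^{p-1}\abs{E_\lambda}\,d\lambda
\le \frac{p}{p-1}(Ca)^{p-1}+p\,a^{p-1}\int_0^\infty\mu^{p-1}e^{-c_d\mu^{1/(d-1)}}\,d\mu .
\]
Substituting $\mu=v^{d-1}$, the last integral is $(d-1)\Gamma((d-1)p)\,c_d^{-(d-1)p}\le(C_dp)^{(d-1)p}$. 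Taking $p$-th roots gives $\norm{f_P}_p\le C_dp^{d-1}a^{1-1/p}$ for $p\ge2$.

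\emph{Range $1<p<2$.} Here the factor $p/(p-1)$ is harmful, so we interpolate instead. Since $\norm{f_P}_1=1$, Hölder gives
\[
\norm{f_P}_p\le\norm{f_P}_1^{2/p-1}\norm{f_P}_2^{2-2/p}\le(C_da^{1/2})^{2-2/p}=C_d'a^{1-1/p}.
\]
This is the claimed bound, since $p^{d-1}\ge1$.

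\emph{Block products.} For $f_{\mathbf m,d}$ with $p\ge 2$, the upper bound is the case above, using $w_d(\cB)=dM_{\mathbf m}$ and $\osc=2$. For the lower bound we plan to use the lower half of \eqref{eq:block-products-two-sided}, upgraded to a pointwise density bound:
\[
f_{\mathbf m,d}(u)\gtrsim_d M_{\mathbf m}\log^{d-1}\bigl(1/(M_{\mathbf m}\abs u)\bigr)\quad\text{for }\abs u\le s_d/M_{\mathbf m}.
\]
To get this, we would use that each centred block average has a density of order $\sqrt{m_k}$ near $0$. Integrating the $p$-th power of this pointwise bound over $\abs u\le s_d/M_{\mathbf m}$ yields $\bigl(c_dp^{d-1}\bigr)^pM_{\mathbf m}^{p-1}$. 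This is exactly what Proposition~\ref{prop:block-density} records, and we would invoke it.
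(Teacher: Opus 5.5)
Your proof is correct, but it reaches \eqref{eq:main-density} by a different route from the paper's.

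\textbf{The paper's route.} The paper never leaves intervals.
\begin{enumerate}
\item It selects $(B,w)$ from a minimizer and a maximizer of $P$.
\item It bounds each fiber density in $L^p$ by $(\sqrt{2b}/\abs{Q(y)})^{1-1/p}$.
\item It applies Minkowski's integral inequality, which requires the measurable disintegration of Lemma~\ref{lem:disintegration}.
\item It controls the negative moment $\int\abs{Q}^{-\eta}\,d\mu_J$ by stochastic domination by a product of uniforms (Lemma~\ref{lem:inverse-moments}).
\end{enumerate}
The factor $p^{r-1}$ is then exactly $(1-\eta)^{-s}$ with $\eta=1-1/p$. So every $1<p<\infty$ is handled at once, with an explicit constant and no interpolation.

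\textbf{Your route.} You upgrade the small-ball bound to the rearrangement-type statement $\nu(E)\le\Phi_d(a\abs E)$ for all Borel sets $E$. This needs only Fubini and the $L^\infty$ slicing bound on fibers. It gives absolute continuity and Corollary~\ref{cor:rearrangement} immediately. The $L^p$ bounds then follow from a distribution-function computation. The price is less explicit constants and a separate interpolation for $1<p<2$.

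\textbf{The key step, made precise.} The fiber step is not where the geometry of the target set enters. The density bound $\sqrt2/\norm{a(y)}_2$ handles any Borel set and gives $\min\{1,\sqrt{2b}\abs E/\abs{Q(y)}\}$. The step that does use the target set is the selection of $(B,w)$ in the proof of Proposition~\ref{prop:zero-centered}. There it enters through a point $x$ with $\abs{P(x)}\le\rho$ and the normalization $\norm P_\infty=1$. Your suggested remedy via $\abs{(E-u)/c}=\abs E/\abs c$ is aimed at the fiber step, not at this one.

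The fix is to choose $x,z$ as a minimizer and a maximizer of $P$, as the paper does in its density proof. Then $\norm{Q}_\infty\ge\sqrt b\,a_{r-1}(\cB\setminus\{B\})\osc(P)/S_{r-1}(\cB)$, independently of $E$. The recursion only needs the already proved zero-centered estimate for $Q$ on sublevel sets of $\abs Q$. You no longer need an induction over Borel sets, and you obtain $\nu(E)\le\Phi_r(a_r(\cB)\abs E/\osc(P))$.

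\textbf{A minor point.} Your tail bound with exponent $1/(d-1)$ requires $d\ge2$. For $d=1$, the set bound gives $f_P\le a$ almost everywhere directly.
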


Finally, the all-center estimate has a deterministic Remez consequence. By Lemma~\ref{lem:Phi-calculus} below, $\Phi_d$ is a continuous strictly increasing bijection of $[0,1]$ onto itself; we denote its inverse by $\Phi_d^{-1}$. For a measurable set $\Omega\subseteq[0,1]^n$ of measure $\theta>0$, put
\begin{align} \label{eq:rOmega}
r_\Omega(P) = \inf_{c\in\R}\esssup_{x\in\Omega}\abs{P(x)-c}.
\end{align}

\begin{application}[Translation-invariant Remez inequality] \label{thm:main-remez}
Under the assumptions of Theorem~\ref{thm:main-smallball}, if $\mu_n(\Omega) = \theta>0$, then
\begin{align} \label{eq:main-remez}
\osc(P)\le\frac{C_dw_d(\cB)}{\Phi_d^{-1}(\theta)}r_\Omega(P).
\end{align}
Equivalently, after changing the degree-dependent constant,
\begin{align} \label{eq:main-remez-log}
\osc(P)\le C_dw_d(\cB)\frac{\log^{d-1}(e/\theta)}{\theta}r_\Omega(P).
\end{align}
For the block-product sublevel sets, the dependence on the full block-size vector is attained up to constants depending only on $d$.
\end{application}

\subsection{Comparison with related work}

The preceding theorems combine a profile and a scale that appear separately in the closest coefficient-normalized results. Put
$$
A_\infty(P) = \max_{\abs S = d}\abs{a_S},\quad A_2(P) = \left(\sum_{\abs S = d}\abs{a_S}^2\right)^{1/2}.
$$
Kosov's recent product-measure preprint, specialized to individual degree one, gives the product-order profile under normalization by $A_\infty(P)$ \cite{Kosov25}. The published theorem of Glazer and Mikulincer gives an all-center estimate with the generic power profile under normalization by $A_2(P)$ \cite{GlazerMikulincer22}.

For the block model \eqref{eq:unequal-product}, there are $M_{\mathbf m}^2$ top-degree monomials, each with coefficient $2^d/M_{\mathbf m}^2$. Consequently,
$$
A_\infty(P_{\mathbf m,d}) = \frac{2^d}{M_{\mathbf m}^2},\quad A_2(P_{\mathbf m,d}) = \frac{2^d}{M_{\mathbf m}}.
$$
Thus the maximum-coefficient theorem has the product-order profile but the scale $M_{\mathbf m}^2$, while the aggregate coefficient theorem has the natural scale $M_{\mathbf m}$ but the generic power profile. Theorem~\ref{thm:main-smallball} combines the product profile with the scale $M_{\mathbf m}$.

The normalizations are not uniformly comparable. For fixed $d$ and $n>d$, the polynomial
$$
P_n(x) = x_1\cdots x_d+n^{-2}\sum_{j = d+1}^nx_j
$$
has $A_\infty(P_n) = A_2(P_n) = 1$ and $\osc(P_n)\asymp1$. In every admissible partition, the variables $x_1,\ldots,x_d$ belong to distinct blocks; the terms in $w_d$ obtained by omitting one of these blocks show that $w_d\gtrsim_d\sqrt n$. The present theorem is therefore adapted to admissible block geometry rather than a universal replacement of coefficient normalization.

For homogeneous scalar block-multilinear forms, the tensor results of Hu and Paouris overlap the zero-centered product-profile phenomenon \cite{HuPaouris26}. The present results also cover nonhomogeneous multi-affine polynomials, are uniform over all centers, use global oscillation, allow arbitrary admissible partitions, and yield density and quotient-Remez estimates.

On the density side, published work of Kosov gives fractional Besov regularity under much broader hypotheses \cite{Kosov18,Kosov22}. In the Gaussian multi-affine setting, the modulus of continuity obtained in \cite{KosovIndividualDegree22} already implies that the density belongs to every finite $L^p$.\footnote{Indeed, $t\log^{d-1}(e/t)\lesssim_{d,\alpha}t^\alpha$ for every $\alpha<1$, and the standard one-dimensional Besov embedding $B^\alpha_{1,\infty}(\R)\hookrightarrow L^p(\R)$ applies whenever $\alpha>1-1/p$.} The new content of Theorem~\ref{thm:main-density} is the explicit and sharp $p^{d-1}$ norm growth with the global-oscillation/admissible-block normalization, together with matching unequal-block examples. The Remez application should be compared with the Brudnyi-Ganzburg theorem for arbitrary polynomials on convex bodies \cite{BrudnyiGanzburg73}.

The claims of sharpness in this paper have the following precise scope. They concern the small-argument order of $\Phi_d$, the dependence on the full block-size vector for prescribed admissible partitions with exactly $d$ blocks, and the $p^{d-1}$ density growth on the block-product models for $p\ge2$. No optimality is asserted for $w_d(\cB)$ when $\#\cB>d$, for the complete dependence on the number of blocks, for a different profile, or for arbitrary Remez observation sets.

For the singleton partition, Theorem~\ref{thm:main-smallball} gives the dimensional exponent $d-1/2$. The block-product models rule out every exponent below $d/2$ among uniform all-center oscillation-normalized estimates with profile $\Phi_d$. We do not know whether the exponent $d-1/2$ can be reduced; to the authors' knowledge, the optimal exponent in this setting is unknown.

\subsection{Proof strategy and organization}

The proof is based on one weighted contraction principle. A telescoping path between two points selects a block $B$ and a sign vector $w$ for which the scalar contraction $Q_{B,w}$ has large supremum norm. The weight assigned to $B$ is the recursive scale of the partition left after $B$ is removed. The pair $(B,w)$ is then fixed. Conditioning on the complementary variables and applying affine cube slicing in $B$ reduces the problem to the lower-degree polynomial $Q_{B,w}$. The child scale in the induction hypothesis cancels with the same scale in the selection inequality, and the integration closes through
$$
\int_0^1\Phi_{d-1}(s/t)dt = \Phi_d(s).
$$
The density proof uses the same fixed contraction and a negative-moment estimate. The Remez application follows by inverting the all-center small-ball bound.

Section~\ref{sec:consequences} records the main corollaries and examples. Section~\ref{sec:preliminaries} develops the product profile, affine slicing, block contractions, and the weighted recursive scale. Sections~\ref{sec:smallball}, \ref{sec:models}, and \ref{sec:density} prove the three main theorems. Section~\ref{sec:remez} proves the Remez application.

\section{Consequences, corollaries, and examples} \label{sec:consequences}

For a finite partition $\cB$, define
$$
\sigma(\cB) = \sum_{B\in\cB}\sqrt{\abs B},\quad w_*(P) = \min_{\cB\in\Adm(P)}w_{\deg(P)}(\cB).
$$

\begin{corollary}[Useful forms of the small-ball estimate] \label{cor:smallball-consequences}
Under the assumptions of Theorem~\ref{thm:main-smallball},
\begin{align} \label{eq:main-intrinsic}
\cQ_P(\rho)\le\Phi_d\left(C_dw_*(P)\frac{\rho}{\osc(P)}\right).
\end{align}
Moreover,
\begin{align} \label{eq:w-coarse-intro}
w_d(\cB)\le\sqrt n \sigma(\cB)^{d-1}.
\end{align}
Consequently,
\begin{align} \label{eq:main-sigma}
\cQ_P(\rho)\le\Phi_d\left(C_d\sqrt n \sigma(\cB)^{d-1}\frac{\rho}{\osc(P)}\right).
\end{align}
If $\cB$ has at most $q$ blocks, then
\begin{align} \label{eq:main-fixed-q}
\cQ_P(\rho)\le\Phi_d\left(C_dq^{(d-1)/2}n^{d/2}\frac{\rho}{\osc(P)}\right).
\end{align}
If $\cB = \{B_1,\ldots,B_d\}$ has exactly $d$ blocks, then
\begin{align} \label{eq:main-minimal-block}
\cQ_P(\rho)\le\Phi_d\left(C_d\prod_{j = 1}^d\sqrt{\abs{B_j}}\frac{\rho}{\osc(P)}\right).
\end{align}
\end{corollary}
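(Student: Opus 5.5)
The plan is to deduce each part of the corollary from Theorem~\ref{thm:main-smallball} together with two elementary facts: monotonicity of $\Phi_d$, and a few combinatorial bounds on $w_d(\cB)$.

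\emph{Intrinsic form \eqref{eq:main-intrinsic}.} The set $\Adm(P)$ is finite, since there are finitely many partitions of $n$ variables. Hence the minimum defining $w_*(P)$ is attained at some $\cB_*\in\Adm(P)$. Applying Theorem~\ref{thm:main-smallball} with $\cB=\cB_*$ gives \eqref{eq:main-intrinsic} directly.

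\emph{Coarse bound \eqref{eq:w-coarse-intro}.} For each $\cI$ with $\abs{\cI}=d-1$, bound the complementary factor crudely:
$$\sum_{C\in\cB\setminus\cI}\abs C\le n,$$
so its square root is at most $\sqrt n$. Summing over all such $\cI$ leaves
$$\sum_{\abs{\cI}=d-1}\prod_{B\in\cI}\sqrt{\abs B},$$
which is the elementary symmetric polynomial $e_{d-1}$ evaluated at the nonnegative numbers $\sqrt{\abs B}$, $B\in\cB$. Expanding $\sigma(\cB)^{d-1}$ produces every product of $d-1$ distinct blocks at least once (in fact $(d-1)!$ times), and all other terms are nonnegative. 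Therefore $e_{d-1}\le\sigma(\cB)^{d-1}$, and \eqref{eq:w-coarse-intro} follows. Inserting this into Theorem~\ref{thm:main-smallball} and using that $\Phi_d$ is nondecreasing (Lemma~\ref{lem:Phi-calculus}) yields \eqref{eq:main-sigma}.

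\emph{At most $q$ blocks, \eqref{eq:main-fixed-q}.} By Cauchy--Schwarz,
$$\sigma(\cB)\le\sqrt{\#\cB}\,\Bigl(\sum_{B\in\cB}\abs B\Bigr)^{1/2}\le\sqrt{qn},$$
so $\sqrt n\,\sigma(\cB)^{d-1}\le q^{(d-1)/2}n^{d/2}$. Monotonicity of $\Phi_d$ then gives \eqref{eq:main-fixed-q}.

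\emph{Exactly $d$ blocks, \eqref{eq:main-minimal-block}.} Here $\cI$ is obtained by omitting a single block $B_j$, so the complementary sum is exactly $\abs{B_j}$. Each term of $w_d(\cB)$ therefore equals $\prod_k\sqrt{\abs{B_k}}$, and there are $d$ such terms, giving
$$w_d(\cB)=d\prod_k\sqrt{\abs{B_k}}.$$
The factor $d$ is absorbed into $C_d$.

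There is no real obstacle in this argument. The only points needing care are the facts that $\Adm(P)$ is finite and nonempty (it always contains the singleton partition) and the monotonicity of $\Phi_d$, including its capped extension to $[1,\infty)$.
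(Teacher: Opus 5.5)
Your proposal is correct and follows the paper's own argument step for step. You minimize Theorem~\ref{thm:main-smallball} over $\Adm(P)$, noting explicitly that this set is finite and nonempty, a point the paper leaves implicit. You then bound the complementary factor by $\sqrt n$ and $e_{d-1}\bigl((\sqrt{\abs B})_{B\in\cB}\bigr)$ by $\sigma(\cB)^{d-1}$, use Cauchy--Schwarz to get $\sigma(\cB)^2\le qn$, and compute $w_d(\cB)=d\prod_j\sqrt{\abs{B_j}}$ when there are exactly $d$ blocks.
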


\begin{proof}
Minimizing Theorem~\ref{thm:main-smallball} over $\Adm(P)$ gives \eqref{eq:main-intrinsic}. For every $(d-1)$-block family $\cI\subseteq\cB$,
$$
\sqrt{\sum_{C\in\cB\setminus\cI}\abs C}\le\sqrt n.
$$
The sum of the products $\prod_{B\in\cI}\sqrt{\abs B}$ is the $(d-1)$st elementary symmetric polynomial in the numbers $(\sqrt{\abs B})_{B\in\cB}$ and is bounded by $\sigma(\cB)^{d-1}$. This proves \eqref{eq:w-coarse-intro}. If $\#\cB\le q$, Cauchy-Schwarz gives
\begin{align} \label{eq:sigma-cs}
\sigma(\cB)^2\le qn.
\end{align}
This proves \eqref{eq:main-fixed-q}. Finally, when $\cB$ has exactly $d$ blocks, every $(d-1)$-block family is obtained by omitting one block, and
$$
w_d(\cB) = \sum_{j = 1}^d\left(\prod_{i\ne j}\sqrt{\abs{B_i}}\right)\sqrt{\abs{B_j}} = d\prod_{j = 1}^d\sqrt{\abs{B_j}}.
$$
Absorbing $d$ into $C_d$ proves \eqref{eq:main-minimal-block}.
\end{proof}

The weighted scale is explicit. If $\cB$ consists of $q\ge d$ equal blocks of size $n/q$, then
\begin{align} \label{eq:w-equal-blocks}
w_d(\cB) = \binom q{d-1}\sqrt{q-d+1}\left(\frac nq\right)^{d/2}\asymp_dq^{(d-1)/2}n^{d/2}.
\end{align}
For the singleton partition,
\begin{align} \label{eq:w-singleton}
w_d(\{\{1\},\ldots,\{n\}\}) = \binom n{d-1}\sqrt{n-d+1}\asymp_dn^{d-1/2}.
\end{align}
If $\cB$ has exactly $d$ blocks of sizes $M,1,\ldots,1$, then $w_d(\cB) = d\sqrt M$, whereas the coarse scale $\sqrt n \sigma(\cB)^{d-1}$ is of order $M^{d/2}$.

The monomial $M_d(x) = x_1\cdots x_d$ is the exact product prototype. Its distribution function is $\Phi_d$, and its density is
\begin{align} \label{eq:monomial-density-intro}
f_{M_d}(t) = \frac{\log^{d-1}(1/t)}{(d-1)!}\1_{(0,1)}(t).
\end{align}
In particular, $\norm{f_{M_d}}_{L^p(\R)}\asymp_dp^{d-1}$ as $p\to\infty$, and the density is unbounded when $d\ge2$.

\begin{proposition}[Sharp density scaling for block products] \label{prop:block-density}
Let $X\sim\mu_{N_{\mathbf m}}$, and let $f_{\mathbf m,d}$ be the density of $P_{\mathbf m,d}(X)$. Then
\begin{align} \label{eq:block-density}
\norm{f_{\mathbf m,d}}_{L^p(\R)}\asymp_dp^{d-1}M_{\mathbf m}^{1-1/p},\quad p\ge2,
\end{align}
uniformly over $\mathbf m\in\N^d$.
\end{proposition}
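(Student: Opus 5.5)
The upper bound is the general density estimate, and the lower bound comes from an explicit description of $P_{\mathbf m,d}(X)$ as a product of nearly independent factors.

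\emph{Upper bound.} Apply Theorem~\ref{thm:main-density} with the admissible $d$-block partition $\{B_1,\ldots,B_d\}$. Here $w_d(\cB)=dM_{\mathbf m}$ by Corollary~\ref{cor:smallball-consequences}, and $\osc(P_{\mathbf m,d})=2$ by \eqref{eq:block-product-osc}. This gives $\norm{f_{\mathbf m,d}}_{L^p}\lesssim_d p^{d-1}M_{\mathbf m}^{1-1/p}$ for all $1<p<\infty$.

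\emph{Lower bound: reduction.} Write $P_{\mathbf m,d}(X)=\prod_k Y_k$ with independent factors
$$
Y_k=\frac2{m_k}\sum_{i\in B_k}X_i-1 .
$$
Each $Y_k$ is an affine image of a centered Irwin--Hall variable. It has variance $1/(3m_k)$ and a density $g_k$ on $[-1,1]$. I will use one uniform estimate: there are $c,c'>0$, depending on nothing, such that
$$
g_k(y)\ge c\sqrt{m_k}\quad\text{for } \abs y\le c'/\sqrt{m_k}.
$$
For large $m_k$ this follows from the local CLT for sums of uniforms. For small $m_k$ it follows from the explicit piecewise-polynomial density, which is bounded below near the center. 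Set $Z_k=\sqrt{m_k}\,Y_k$, so $P_{\mathbf m,d}(X)=M_{\mathbf m}^{-1}\prod_kZ_k$ and
$$
f_{\mathbf m,d}(t)=M_{\mathbf m}\,h(M_{\mathbf m}t),
$$
where $h$ is the density of $\prod_kZ_k$. Changing variables gives
$$
\norm{f_{\mathbf m,d}}_{L^p}=M_{\mathbf m}^{1-1/p}\norm h_{L^p}.
$$
So it suffices to show $\norm h_{L^p}\gtrsim_d p^{d-1}$ for $p\ge2$, uniformly in $\mathbf m$.

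\emph{Lower bound: comparison with the monomial.} Each $\abs{Z_k}$ has density at least $2c$ on $[0,c']$. Rescale to $W_k=\abs{Z_k}/c'$. On the event $\{W_k\le1 \text{ for all } k\}$, the density of $W_k$ dominates $2cc'$ times the uniform density on $[0,1]$, and the $W_k$ are independent. Hence the density of $\prod_kW_k$ on $(0,1)$ dominates $(2cc')^d f_{M_d}$ pointwise, since products of independent variables with dominated densities have dominated product densities. Therefore
$$
h_{\abs{\prod Z_k}}(s)\gtrsim_d \log^{d-1}\!\left(\frac{c'^d}{s}\right)\quad\text{for } 0<s<c'^d.
$$
It remains to compute $\int_0^{a}\log^{(d-1)p}(a/s)\,ds=a\,\Gamma((d-1)p+1)$. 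This gives
$$
\norm h_{L^p}\gtrsim_d \Gamma((d-1)p+1)^{1/p}\gtrsim_d p^{d-1},
$$
by Stirling's formula, as in the remark after \eqref{eq:monomial-density-intro}. The symmetric density $h$ of $\prod_kZ_k$ is half that of its absolute value on each side, which only costs a constant.

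\emph{Main obstacle.} The delicate step is the uniform-in-$m_k$ lower bound for the Irwin--Hall density on a window of width $\asymp m_k^{-1/2}$. I would prove it directly from log-concavity rather than through a local CLT. The density $g_k$ is log-concave and symmetric, so it is maximized at $0$, and $g_k(0)\asymp\sqrt{m_k}$ because the variance is $1/(3m_k)$. Log-concavity together with the variance bound then forces $g_k\ge g_k(0)/e$ within a constant multiple of the standard deviation. The restriction $p\ge2$ is only used in the upper side's matching; the lower argument works for every $p>1$.
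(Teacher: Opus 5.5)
Your proposal is correct. The upper bound is exactly the paper's, but the lower bound takes a genuinely different route.

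\textbf{How the paper gets the lower bound.} It never looks at the density of the product pointwise. It combines H\"older's inequality, $\Prob\{\abs{P_{\mathbf m,d}(X)}\le\rho\}\le(2\rho)^{1-1/p}\norm{f_{\mathbf m,d}}_{L^p(\R)}$, with the small-ball lower bound $\Phi_d(c_ds)\ge b_ds\log^{d-1}(1/s)$ from Theorem~\ref{thm:block-products}. It applies this at the $p$-dependent radius $s=e^{-A_dp}$, $\rho=s/M_{\mathbf m}$, so that $s^{1/p}=e^{-A_d}$ and $\log^{d-1}(1/s)=(A_dp)^{d-1}$.

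\textbf{How your route differs.} You prove a pointwise statement: the density of $\abs{\prod_kZ_k}$ dominates $(2c)^d\log^{d-1}(c'^d/s)/(d-1)!$ on $(0,c'^d)$. This uses the fact that multiplicative convolution $\psi(t)=\int_0^\infty\phi_1(u)\phi_2(t/u)\,du/u$ is monotone under pointwise domination of the factors. You then read off the $L^p$ norm from the Gamma integral.

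\textbf{Shared input.} Both arguments rest on the same uniform central lower bound for the normalized block sums. Your log-concavity sketch of it (symmetric maximum at $0$ of order $\sqrt{m_k}$, and $g_k\ge g_k(0)/e$ on a window comparable to the standard deviation) is essentially Lemma~\ref{lem:central-density}. The local-CLT alternative would also work but is heavier.

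\textbf{What each buys.} The paper's route is shorter once the two-sided small-ball estimate is in hand, and it uses only distribution-function information. Yours gives more: a logarithmic singularity of the rescaled density at the origin, uniformly in $\mathbf m$. This is the density-level version of the paper's volume bound $b^d\vol_d\{\cdots\}$, and it shows directly that Corollary~\ref{cor:rearrangement} is sharp on the whole block-product family, not only for the monomial. It also needs no smallness restriction on $s$.

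\textbf{A small correction.} Your remark about where $p\ge2$ enters is slightly off. The upper bound holds for all $1<p<\infty$. In the paper, $p\ge2$ is used only on the lower side, to ensure $e^{-A_dp}\le e^{-2A_d}\le s_d$.
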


For a nonnegative measurable function $g$ on $\R$, let $g^{*}$ denote its right-continuous decreasing rearrangement,
$$
g^{*}(s) = \inf\{t\ge0:\vol_1\{y\in\R:g(y)>t\}\le s\},\quad s>0,
$$
where $\vol_1$ denotes Lebesgue measure on $\R$ and $\inf\varnothing = +\infty$.

\begin{corollary}[Logarithmic rearrangement] \label{cor:rearrangement}
Under the assumptions of Theorem~\ref{thm:main-density}, put
$$
K = \frac{w_d(\cB)}{\osc(P)},\quad g(y) = K^{-1}f_P(y/K).
$$
Then $g$ is the density of $KP(X)$ and
\begin{align} \label{eq:main-rearrangement}
g^{*}(s)\le C_d\log^{d-1}(e/s),\quad 0<s\le1.
\end{align}
The logarithmic power is optimal.
\end{corollary}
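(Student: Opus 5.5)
The plan is to read \eqref{eq:main-rearrangement} off the density estimate of Theorem~\ref{thm:main-density} by a Chebyshev-type argument in which $p$ is optimized against $s$. Optimality will then come from the monomial.

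First, the scaling. Since $KP(X)$ has density $g(y)=K^{-1}f_P(y/K)$, a change of variables gives
$$
\norm g_{L^p}=K^{1/p-1}\norm{f_P}_{L^p}.
$$
Theorem~\ref{thm:main-density} bounds $\norm{f_P}_{L^p}$ by $C_dp^{d-1}K^{1-1/p}$, and the powers of $K$ cancel exactly. Hence
$$
\norm g_{L^p(\R)}\le C_dp^{d-1}\quad\text{for every }1<p<\infty,
$$
with $C_d$ independent of $p$. This cancellation is the reason for the particular normalization $K=w_d(\cB)/\osc(P)$.

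Second, the rearrangement bound. Rearrangement preserves $L^p$ norms, and $g^{*}$ is decreasing, so for every $s>0$
$$
s\,g^{*}(s)^p\le\int_0^s g^{*}(\tau)^pd\tau\le\norm g_p^p.
$$
This gives $g^{*}(s)\le s^{-1/p}C_dp^{d-1}$. For $0<s\le1$ I would choose $p=\log(e/s)+1\ge2$. Then
$$
s^{-1/p}=\exp\!\left(\frac{\log(1/s)}{p}\right)\le e,
$$
and $p^{d-1}\le 2^{d-1}\log^{d-1}(e/s)$, since $\log(e/s)\ge1$. Together these give $g^{*}(s)\le C_d\log^{d-1}(e/s)$.

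Third, optimality of the logarithmic power. Take $P=M_d=x_1\cdots x_d$. It is admissible with the singleton partition, which has exactly $d$ blocks, so $w_d=d$ and $\osc(P)=1$, giving $K=d$. By \eqref{eq:monomial-density-intro},
$$
g(y)=d^{-1}f_{M_d}(y/d)=\frac{\log^{d-1}(d/y)}{d\,(d-1)!}\quad\text{on }(0,d).
$$
This function is decreasing, so $g^{*}(s)=g(s)\ge c_d\log^{d-1}(e/s)$ for small $s$. Thus no power smaller than $d-1$ can replace the exponent in \eqref{eq:main-rearrangement}.

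The only delicate point is that the constant in Theorem~\ref{thm:main-density} must be uniform in $p$ with exactly $p^{d-1}$ growth; this is how the theorem is stated, so the argument closes.
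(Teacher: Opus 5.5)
Your proposal is correct and follows the paper's proof. The powers of $K$ cancel in $\norm g_{L^p}=K^{1/p-1}\norm{f_P}_{L^p}$, Chebyshev for the rearrangement gives $g^{*}(s)\le s^{-1/p}\norm g_{L^p}$, a logarithmic choice of $p$ closes the bound, and the monomial $M_d$ gives optimality. The only differences are cosmetic: your choice $p=\log(e/s)+1$ also covers $s=1$, which the paper treats separately via $g^{*}(1)\le\norm g_{L^1}=1$, and your explicit computation of $g^{*}(s)=g(s)$ for $K=d$ spells out what the paper leaves as a one-line remark.
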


\begin{corollary}[Conventional Remez form] \label{cor:remez-sup}
Under the assumptions of Application~\ref{thm:main-remez},
\begin{align} \label{eq:main-remez-sup}
\norm P_{L^\infty([0,1]^n)}\le C_dw_d(\cB)\frac{\log^{d-1}(e/\theta)}{\theta}\norm P_{L^\infty(\Omega,\mu_n)}.
\end{align}
\end{corollary}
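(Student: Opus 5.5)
The plan is to deduce Corollary~\ref{cor:remez-sup} from the quotient form \eqref{eq:main-remez-log} of Application~\ref{thm:main-remez}. Two elementary comparisons carry the argument, one on each side of the inequality.

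First, on the observation set I will bound $r_\Omega(P)$ by the sup norm on $\Omega$. Taking $c=0$ in \eqref{eq:rOmega} gives
$$
r_\Omega(P)\le\esssup_{x\in\Omega}\abs{P(x)}=\norm P_{L^\infty(\Omega,\mu_n)}.
$$

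Second, on the full cube I will bound $\norm P_{L^\infty([0,1]^n)}$ by $\osc(P)$ plus the value of $\abs P$ at a point of $\Omega$. Since $P$ is continuous, for every $x,y\in[0,1]^n$ we have $\abs{P(x)}\le\abs{P(y)}+\osc(P)$. Because $\mu_n(\Omega)=\theta>0$, for any $\varepsilon>0$ the set of $y\in\Omega$ with $\abs{P(y)}\le\norm P_{L^\infty(\Omega,\mu_n)}+\varepsilon$ has positive measure, so it is nonempty. Choosing such a $y$ and letting $\varepsilon\to0$ gives
$$
\norm P_{L^\infty([0,1]^n)}\le\osc(P)+\norm P_{L^\infty(\Omega,\mu_n)}.
$$

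Combining the two comparisons with \eqref{eq:main-remez-log}, and writing $L=\norm P_{L^\infty(\Omega,\mu_n)}$ for brevity, yields
$$
\norm P_\infty\le\left(C_dw_d(\cB)\frac{\log^{d-1}(e/\theta)}{\theta}+1\right)L.
$$
To absorb the additive $1$ into the constant it suffices to check that the main factor is bounded below by a positive constant depending only on $d$:

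- Since $0<\theta\le1$, we have $\log(e/\theta)\ge1$ and $1/\theta\ge1$.
- Since $P$ is nonconstant of exact degree $d$ with $\cB\in\Adm(P)$, the partition $\cB$ has at least $d\ge1$ blocks, so $w_d(\cB)$ contains at least one term. Each term is a product of square roots of positive integers, so it is at least $1$, and hence $w_d(\cB)\ge1$.

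Therefore $1\le w_d(\cB)\log^{d-1}(e/\theta)/\theta$, and enlarging $C_d$ by $1$ gives \eqref{eq:main-remez-sup}.

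None of these steps is a real obstacle. The only points needing care are to use the essential supremum correctly, which is where $\theta>0$ enters, and to verify the lower bound $w_d(\cB)\ge1$ so that the additive term can be absorbed.
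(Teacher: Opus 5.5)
Your proposal is correct and follows the paper's proof essentially step for step. Like the paper, you bound $r_\Omega(P)\le\norm P_{L^\infty(\Omega,\mu_n)}$ by taking $c=0$, bound $\norm P_{L^\infty([0,1]^n)}\le\osc(P)+\norm P_{L^\infty(\Omega,\mu_n)}$ via a good point of $\Omega$, apply the logarithmic quotient form, and absorb the additive term using $w_d(\cB)\ge1$ and $\log^{d-1}(e/\theta)/\theta\ge1$. Your $\varepsilon$-approximation is a harmless variant of the paper's direct choice of $x_0\in\Omega$ outside the exceptional null set, and $\varepsilon=0$ would already work.
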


For comparison, the Brudnyi-Ganzburg inequality for arbitrary degree-$d$ polynomials on convex bodies has, for fixed $d$ and small relative measure $\theta$, a factor of order $(n/\theta)^d$ \cite{BrudnyiGanzburg73,BrudnyiYomdin16}. Under an admissible partition with at most $q$ blocks, Application~\ref{thm:main-remez} gives instead
$$
C_dq^{(d-1)/2}n^{d/2}\frac{\log^{d-1}(e/\theta)}{\theta}.
$$
The improvement reflects the additional admissible block structure.

For $s>0$ and $X\sim\mu_{N_{\mathbf m}}$, put
$$
\Omega_{\mathbf m,s} = \left\{x:\abs{P_{\mathbf m,d}(x)}\le\frac{s}{M_{\mathbf m}}\right\},\quad \theta_{\mathbf m,s} = \Prob\{X\in\Omega_{\mathbf m,s}\}.
$$

\begin{proposition}[Remez lower model] \label{prop:remez-model}
For every $d\ge1$, there are constants $a_d,b_d,\kappa_d,s_d>0$ such that, for every $\mathbf m\in\N^d$ and $0<s\le s_d$,
\begin{align} \label{eq:theta-two-sided}
\Phi_d(a_ds)\le\theta_{\mathbf m,s}\le\Phi_d(b_ds)
\end{align}
and
\begin{align} \label{eq:remez-model}
\frac{\osc(P_{\mathbf m,d})}{r_{\Omega_{\mathbf m,s}}(P_{\mathbf m,d})}\ge\kappa_d\frac{M_{\mathbf m}}{\Phi_d^{-1}(\theta_{\mathbf m,s})}.
\end{align}
\end{proposition}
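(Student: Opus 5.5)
The two-sided bound \eqref{eq:theta-two-sided} is Theorem~\ref{thm:block-products} in different notation. Since $\theta_{\mathbf m,s}=\Prob\{\abs{P_{\mathbf m,d}(X)}\le s/M_{\mathbf m}\}$, the estimate \eqref{eq:block-products-two-sided} gives it directly with $a_d=c_d$, $b_d=C_d$, and the same $s_d$. If needed, I will shrink $s_d$ so that $b_ds_d\le 1$. Then every argument of $\Phi_d$ lies in $(0,1]$, where $\Phi_d$ is a strictly increasing bijection (Lemma~\ref{lem:Phi-calculus}).

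For \eqref{eq:remez-model}, I will bound the numerator and denominator separately.

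\textbf{Numerator.} By \eqref{eq:block-product-osc}, $\osc(P_{\mathbf m,d})=2$.

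\textbf{Denominator.} Take the center $c=0$ in \eqref{eq:rOmega}. Since $\abs{P_{\mathbf m,d}}\le s/M_{\mathbf m}$ on $\Omega_{\mathbf m,s}$, this gives $r_{\Omega_{\mathbf m,s}}(P_{\mathbf m,d})\le s/M_{\mathbf m}$. We also need this quantity to be positive, which holds because $\theta_{\mathbf m,s}>0$ and $P_{\mathbf m,d}$ is nonconstant on a set of positive measure. In any case, an upper bound on the denominator is all that is required. Hence
$$
\frac{\osc(P_{\mathbf m,d})}{r_{\Omega_{\mathbf m,s}}(P_{\mathbf m,d})}\ge\frac{2M_{\mathbf m}}{s}.
$$

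\textbf{Comparing $s$ with $\Phi_d^{-1}(\theta_{\mathbf m,s})$.} It remains to show $s\lesssim_d\Phi_d^{-1}(\theta_{\mathbf m,s})$. This follows from the lower half of \eqref{eq:theta-two-sided}: $\theta_{\mathbf m,s}\ge\Phi_d(a_ds)$ with $a_ds\le1$. Applying the increasing map $\Phi_d^{-1}$ gives $\Phi_d^{-1}(\theta_{\mathbf m,s})\ge a_ds$. Therefore
$$
\frac{2M_{\mathbf m}}{s}\ge\frac{2a_dM_{\mathbf m}}{\Phi_d^{-1}(\theta_{\mathbf m,s})},
$$
and \eqref{eq:remez-model} holds with $\kappa_d=2a_d$.

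\textbf{Main obstacle.} There is little difficulty here; the substance is in the lower bound of Theorem~\ref{thm:block-products}, which is assumed. The only points needing care are that $\Phi_d^{-1}$ is defined on $\theta_{\mathbf m,s}\in(0,1]$ and that the arguments $a_ds$, $b_ds$ stay in $(0,1]$. Both are handled by choosing $s_d$ small.
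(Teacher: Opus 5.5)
Your proposal is correct and follows essentially the same route as the paper: \eqref{eq:theta-two-sided} is Theorem~\ref{thm:block-products} rewritten, the center $c=0$ gives $r_{\Omega_{\mathbf m,s}}(P_{\mathbf m,d})\le s/M_{\mathbf m}$, and together with $\osc(P_{\mathbf m,d})=2$ and $\Phi_d^{-1}(\theta_{\mathbf m,s})\ge a_ds$ this yields $\kappa_d=2a_d$. Your extra checks are sound: you need $a_ds\le 1$ to invert $\Phi_d$, and the positivity of $r_{\Omega_{\mathbf m,s}}$ follows from Lemma~\ref{lem:zero-set}.
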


\section{Preliminaries} \label{sec:preliminaries}

For a finite partition $\cC$, put $N(\cC) = \sum_{C\in\cC}\abs C$ and $N(\varnothing) = 0$. If $E$ is a finite coordinate set and $I\subseteq E$, write $I^c = E\setminus I$, with the ambient set always clear from the context. We identify $[0,1]^I$ and $\R^I$ with the spaces of vectors indexed by $I$. For $x\in[0,1]^E$, write $x_I = (x_i)_{i\in I}$. Let $\mu_I$ denote product probability measure on $[0,1]^I$; in particular, $\mu_\varnothing$ is unit mass on the one-point space $[0,1]^\varnothing$. If $I = [m]$, we also write $\mu_m$. When a function on $[0,1]^I$ depends only on coordinates in $A\subseteq I$, we identify it with the corresponding function on $[0,1]^A$; integrating the coordinates in $I\setminus A$ then contributes the factor one. For vectors, $\norm{\cdot}_p$ denotes the usual $\ell^p$-norm, while function norms are written as $\norm{\cdot}_{L^p}$; in particular, $L^\infty$-norms are essential suprema. We write $\vol_k$ for $k$-dimensional Lebesgue measure. The notions of active variables, admissible partitions, and block cost are used verbatim for polynomials indexed by an arbitrary finite coordinate set. Finally, $\norm P_\infty$ denotes the supremum norm on the full cube.

\subsection{Elementary facts and the product profile}
\begin{lemma}[Vertex extrema] \label{lem:vertex}
A multi-affine polynomial attains its maximum and minimum on $\{0,1\}^n$.
\end{lemma}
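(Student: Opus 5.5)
The plan is to argue one coordinate at a time, pushing an arbitrary point of the cube to a vertex without lowering (respectively, without raising) the value of $P$. Since $[0,1]^n$ is compact and $P$ is continuous, the maximum is attained at some point $x^\ast\in[0,1]^n$. We then replace the coordinates of $x^\ast$ one by one with values in $\{0,1\}$ so that the value of $P$ never decreases. The minimum is handled by the same argument applied to $-P$, which is again multi-affine.

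The key step is the following. Fix $i\in[n]$ and the coordinates $x_j$ for $j\ne i$. By \eqref{eq:expansion}, the function $t\mapsto P(x_1,\ldots,x_{i-1},t,x_{i+1},\ldots,x_n)$ has the form $\alpha+\beta t$, where
$$
\alpha=\sum_{S\not\ni i}a_Sm_S(x),\quad \beta=\sum_{S\ni i}a_S\prod_{j\in S\setminus\{i\}}x_j .
$$
Both $\alpha$ and $\beta$ are independent of $t$. An affine function on $[0,1]$ attains its maximum at an endpoint: at $t=1$ if $\beta\ge0$ and at $t=0$ if $\beta<0$. Hence replacing $x_i$ by this endpoint does not decrease $P$.

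We apply this successively for $i=1,\ldots,n$, starting from $x^\ast$. Each replacement keeps the already-fixed coordinates in $\{0,1\}$ and does not decrease the value. After $n$ steps we reach a point $v\in\{0,1\}^n$ with $P(v)\ge P(x^\ast)=\max P$, so $P(v)=\max_{[0,1]^n}P$. An equivalent way to package the same argument is to note that every $x\in[0,1]^n$ satisfies
$$
P(x)=\sum_{v\in\{0,1\}^n}\Bigl(\prod_{i}x_i^{v_i}(1-x_i)^{1-v_i}\Bigr)P(v),
$$
which exhibits $P(x)$ as a convex combination of the vertex values. This immediately gives $\min_{\{0,1\}^n}P\le P(x)\le\max_{\{0,1\}^n}P$.

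There is no real obstacle in this argument. The only point needing care is that fixing the other coordinates really leaves an affine function of $x_i$. This holds because every monomial $m_S$ contains $x_i$ with exponent at most one.
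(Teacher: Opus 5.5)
Your proof is correct and follows the paper's argument: with the other coordinates fixed, $P$ is affine in one coordinate, so an extremizer can be moved coordinate by coordinate to a vertex without worsening its value. The order in which you process the coordinates ($x_1,\ldots,x_n$ versus the paper's $x_n,\ldots,x_1$) is immaterial. Your convex-combination identity is a valid alternative packaging, obtained by iterating $P(x)=(1-x_i)P(x)|_{x_i=0}+x_iP(x)|_{x_i=1}$ over all coordinates.
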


\begin{proof}
With all other coordinates fixed, the polynomial is affine in $x_n$, so an extremum in that coordinate occurs at an endpoint. Repeating this operation for $x_{n-1},\ldots,x_1$ moves an extremizer to a vertex without worsening its value.
\end{proof}

\begin{lemma}[Null level sets] \label{lem:zero-set}
If $P$ is a nonzero multi-affine polynomial, then $\mu_n\{P = 0\} = 0$. Consequently, every nonconstant multi-affine polynomial satisfies
\begin{align} \label{eq:Q-zero}
\cQ_P(0) = 0.
\end{align}
\end{lemma}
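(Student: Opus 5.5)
The plan is to prove the first assertion by induction on the number of variables $n$, using Fubini and the fact that a nonzero multi-affine polynomial is affine in each coordinate.

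\emph{Base case.} For $n=0$ a nonzero polynomial is a nonzero constant, so its zero set is empty.

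\emph{Inductive step.} Write
$$
P(x)=x_nA(x')+B(x'),\quad x'=(x_1,\ldots,x_{n-1}),
$$
where $A$ and $B$ are multi-affine in $x'$. Since $P\ne0$, at least one of $A,B$ is a nonzero polynomial. Split $[0,1]^{n-1}$ according to whether $A(x')=0$.
\begin{itemize}
\item On $\{A\ne0\}$, the fiber $\{x_n:P(x',x_n)=0\}$ is a single point, so it has one-dimensional measure zero.
\item On $\{A=0\}$, the fiber is either empty or all of $[0,1]$. It is all of $[0,1]$ only when $B(x')=0$ as well.
\end{itemize}
If $A\ne0$ as a polynomial, the induction hypothesis gives $\mu_{n-1}\{A=0\}=0$. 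If $A\equiv0$, then $B\ne0$, and the induction hypothesis gives $\mu_{n-1}\{B=0\}=0$. In both cases the set of $x'$ whose fiber is all of $[0,1]$ is null. Fubini then yields $\mu_n\{P=0\}=0$.

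\emph{Consequence \eqref{eq:Q-zero}.} By definition,
$$
\cQ_P(0)=\sup_u\mu_n\{P=u\}.
$$
For each $u\in\R$, the function $P-u$ is multi-affine. It is nonzero because $P$ is nonconstant. Applying the first part to $P-u$ gives $\mu_n\{P=u\}=0$ for every $u$, so the supremum is $0$.

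The only delicate point is to handle the case $A\equiv0$ separately when choosing which polynomial to apply the induction hypothesis to. The rest is routine measurability, since all the sets involved are closed.
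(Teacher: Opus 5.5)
Your proof is correct and follows the paper's argument essentially verbatim: induction on $n$ via $P=x_nA(x')+B(x')$, Fubini to reduce to the null set $\{A=0\}\cap\{B=0\}$, and then applying the result to $P-u$ (the paper starts the induction at $n=1$ rather than $n=0$, which is immaterial). One cosmetic fix: on $\{A\ne0\}$ the fiber is \emph{at most} one point, since $-B(x')/A(x')$ need not lie in $[0,1]$.
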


\begin{proof}
Induct on $n$. If $n = 1$, a nonzero affine function is either a nonzero constant, whose zero set is empty, or has at most one zero. Thus the assertion holds in dimension one. Write in dimension $n$
$$
P(x',x_n) = x_nQ(x')+R(x'),
$$
where $Q$ and $R$ are not both zero. For fixed $x'$, the zero fiber has one-dimensional measure zero unless $Q(x') = R(x') = 0$. Fubini therefore gives
$$
\mu_n\{P = 0\} = \mu_{n-1}(\{Q = 0\}\cap\{R = 0\}).
$$
Every nonzero one of $Q,R$ has a null zero set by induction, so the right-hand side is zero. Applying the result to $P-u$ proves \eqref{eq:Q-zero} for every $u$.
\end{proof}

Consequently, for a nonconstant multi-affine polynomial, replacing a strict sublevel inequality by the corresponding non-strict inequality does not change its measure. We use this fact without further comment.

\begin{lemma}[Profile calculus] \label{lem:Phi-calculus}
For every $d\ge1$, $\Phi_d$ is continuous on $[0,\infty)$ and strictly increasing on $[0,1]$. For $0<t<1$,
\begin{align} \label{eq:Phi-derivative}
\Phi_d'(t) = \frac{\log^{d-1}(1/t)}{(d-1)!}.
\end{align}
If $1\le r\le d$, then $\Phi_r\le\Phi_d$. For $d\ge2$ and $s\ge0$,
\begin{align} \label{eq:Phi-recursion}
\int_0^1\Phi_{d-1}(s/t) dt = \Phi_d(s).
\end{align}
Finally, for $a,b>0$,
\begin{align} \label{eq:Phi-ratio}
\lim_{t\downarrow0}\frac{\Phi_d(at)}{\Phi_d(bt)} = \frac ab.
\end{align}
\end{lemma}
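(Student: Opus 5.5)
The plan is to work directly from the explicit formula $\Phi_d(t)=t\sum_{j=0}^{d-1}L^j/j!$ with $L=\log(1/t)$ on $(0,1]$, together with $\Phi_d(0)=0$ and $\Phi_d=1$ on $[1,\infty)$. Continuity on $(0,1)$ and on $(1,\infty)$ is clear. At $t=1$ we have $L=0$, so $\Phi_d(1)=1$, which matches the cap. At $t\downarrow0$ each term $tL^j\to0$, so $\Phi_d$ is continuous at $0$.

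For the derivative formula \eqref{eq:Phi-derivative}, differentiate termwise on $(0,1)$. Using $\frac{d}{dt}(tL^j/j!)=L^j/j!-L^{j-1}/(j-1)!$ for $j\ge1$, and $\frac{d}{dt}t=1$ for $j=0$, the sum telescopes to $L^{d-1}/(d-1)!$. This derivative is strictly positive on $(0,1)$, so together with continuity $\Phi_d$ is strictly increasing on $[0,1]$. For the monotonicity in the index, note that $\Phi_d-\Phi_r=t\sum_{j=r}^{d-1}L^j/j!\ge0$ on $(0,1]$, and both functions equal $1$ beyond $1$ and $0$ at $0$.

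For the recursion \eqref{eq:Phi-recursion}, the quickest route is probabilistic. We have $\Phi_{d-1}(s/t)=\Prob\{U_1\cdots U_{d-1}\le s/t\}$, where the product distribution identity holds at every argument because of the caps. Integrating over $t=U_d$ uniform gives $\Prob\{U_1\cdots U_d\le s\}=\Phi_d(s)$ by Fubini. To keep the argument self-contained, I would first verify that the product of $d$ independent uniforms has distribution function $\Phi_d$. This follows by induction from the same integral, or directly from the fact that $-\log U_i$ are i.i.d.\ exponentials, so their sum is Gamma$(d,1)$ with tail $e^{-x}\sum_{j<d}x^j/j!$; setting $x=\log(1/s)$ gives $\Phi_d(s)$. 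A direct analytic check is also possible: for $s\ge1$ both sides equal $1$; for $s<1$, split the integral at $t=s$, where $\Phi_{d-1}(s/t)=1$ for $t\le s$, and substitute $u=s/t$ on $[s,1]$.

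For the ratio limit \eqref{eq:Phi-ratio}, use \eqref{eq:Phi-asymp-intro}, that is $\Phi_d(t)\sim tL^{d-1}/(d-1)!$. Since $\log(1/(at))/\log(1/(bt))\to1$, the ratio of the dominant terms tends to $a/b$. For $d=1$ the ratio is exactly $a/b$ for small $t$.

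The only mild obstacle is handling the cap in the recursion. The probabilistic identity sidesteps it, provided we use the convention that $\Phi_{d-1}$ evaluated at arguments $\ge1$ equals $1$, which is exactly the stated convention.
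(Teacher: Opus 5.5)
Your proof is correct. Apart from the recursion, it coincides with the paper's proof: the termwise telescoping derivative, continuity at $0$ from $t\log^k(1/t)\to0$, comparing the defining sums to get $\Phi_r\le\Phi_d$, and the ratio limit read off from \eqref{eq:Phi-asymp-intro}.

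For \eqref{eq:Phi-recursion}, the paper uses what you list as your fallback. It handles $s=0$ and $s\ge1$ directly. For $0<s<1$ it splits at $t=s$ and substitutes $u=\log(t/s)$, which turns the integral into $s+s\sum_{j=0}^{d-2}\frac{1}{j!}\int_0^{\log(1/s)}u^j\,du$.

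Your preferred probabilistic route is to read $\Phi_{d-1}(s/t)$ as $\Prob\{U_1\cdots U_{d-1}\le s/t\}$ and integrate out $t=U_d$. This is valid for every $s\ge0$, and it explains why the cap at $1$ is harmless. However, it presupposes the product law of Lemma~\ref{lem:Phi-product}, which the paper proves afterwards via the gamma tail. That tail formula needs the same repeated integration by parts as the direct computation, so the detour saves no work.

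Only the exponential/gamma justification of the product law is usable here. Proving it ``by induction from the same integral'' would require evaluating $\int_0^1\Phi_{d-1}(s/t)\,dt$, which is exactly the identity you are trying to prove, so that option is circular.
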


\begin{proof}
For $j\ge1$,
$$
\frac{d}{dt}\left(\frac{t\log^j(1/t)}{j!}\right) = \frac{\log^j(1/t)}{j!}-\frac{\log^{j-1}(1/t)}{(j-1)!}.
$$
Adding these identities to the derivative of the term $t$ makes all intermediate terms cancel and proves \eqref{eq:Phi-derivative}. Continuity at zero follows from $t\log^k(1/t)\to0$. The inequality $\Phi_r\le\Phi_d$ for $r\le d$ follows because the defining sum for $\Phi_d$ contains all terms of the sum for $\Phi_r$ and additional nonnegative terms.

If $s = 0$, both sides of \eqref{eq:Phi-recursion} are zero. If $s\ge1$, then $s/t\ge1$ for $0<t\le1$, so the integrand equals one and both sides equal one. It remains to consider $0<s<1$, for which
\begin{align*}
\int_0^1\Phi_{d-1}(s/t) dt = s+\int_s^1\frac{s}{t} \sum_{j = 0}^{d-2}\frac{\log^j(t/s)}{j!} dt = s+s\sum_{j = 0}^{d-2}\frac1{j!} \int_0^{\log(1/s)}u^j du = \Phi_d(s).
\end{align*}
The asymptotic \eqref{eq:Phi-asymp-intro} gives \eqref{eq:Phi-ratio}.
\end{proof}

\begin{lemma}[Products of uniforms] \label{lem:Phi-product}
Let $U_1,\ldots,U_d$ be independent and uniform on $[0,1]$. Then
\begin{align} \label{eq:uniform-product}
\Prob\{U_1\cdots U_d\le t\} = \Phi_d(t), \quad t\ge0.
\end{align}
On $(0,1)$ the product has density
\begin{align} \label{eq:uniform-product-density}
h_d(t) = \frac{\log^{d-1}(1/t)}{(d-1)!}.
\end{align}
\end{lemma}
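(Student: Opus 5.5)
I will prove \eqref{eq:uniform-product} by induction on $d$, using the recursion \eqref{eq:Phi-recursion} from Lemma~\ref{lem:Phi-calculus}. Then I will read off the density from \eqref{eq:Phi-derivative}.

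\emph{Base case.} For $d=1$, $\Prob\{U_1\le t\}=\min(t,1)$ for $t\ge0$. This is $\Phi_1(t)$, since $\Phi_1(t)=t$ on $(0,1]$, $\Phi_1(0)=0$, and $\Phi_1=1$ on $[1,\infty)$.

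\emph{Inductive step.} Let $d\ge2$ and put $V=U_1\cdots U_{d-1}$, which is independent of $U_d$. Fix $t\ge0$ and condition on $U_d=u\in(0,1]$. The event $\{V u\le t\}$ is the same as $\{V\le t/u\}$. Fubini and the induction hypothesis give
$$
\Prob\{U_1\cdots U_d\le t\}=\int_0^1\Prob\{V\le t/u\}\,du=\int_0^1\Phi_{d-1}(t/u)\,du .
$$
The point $u=0$ has measure zero, so it does not matter. The induction hypothesis is used at arguments $t/u$ that may exceed one, and there it is consistent with the convention $\Phi_{d-1}=1$ on $[1,\infty)$. By \eqref{eq:Phi-recursion}, the right-hand side equals $\Phi_d(t)$, which completes the induction. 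The case $t=0$ is covered as well, since the product of the uniforms is almost surely positive and $\Phi_d(0)=0$.

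\emph{Density.} By \eqref{eq:uniform-product}, the distribution function of $U_1\cdots U_d$ is $\Phi_d$. On $[0,1]$, $\Phi_d$ is continuous and increases from $0$ to $1$. On $(0,1)$ it is continuously differentiable, with derivative given by \eqref{eq:Phi-derivative}. This derivative is integrable near $0$ because $\int_0^1\log^{d-1}(1/t)\,dt=(d-1)!$. Therefore $\Phi_d(t)=\int_0^t h_d$ for $t\in[0,1]$, and the product is supported in $[0,1]$. So the law is absolutely continuous with density $h_d$ on $(0,1)$.

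There is no real obstacle here. The only point needing care is the bookkeeping of the cap at one: the conditional probability at $t/u\ge1$ must equal $1$, and this is built into both the induction hypothesis and \eqref{eq:Phi-recursion}.
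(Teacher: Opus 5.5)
Your proof is correct, but it takes a different route from the paper's. The paper passes to logarithms. The variables $E_j=-\log U_j$ are independent rate-one exponentials, and their sum $G_d$ is gamma of shape $d$ with tail $\Prob\{G_d\ge u\}=e^{-u}\sum_{j=0}^{d-1}u^j/j!$. Substituting $u=\log(1/t)$ then gives $\Phi_d(t)$ in closed form, with no induction.

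You instead condition on $U_d$ and invoke the recursion \eqref{eq:Phi-recursion}. This is not circular, since Lemma~\ref{lem:Phi-calculus} proves that identity by direct integration, with no probabilistic input. Your handling of the cap at one and of $t=0$ is also right.

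\emph{What each route buys.} Yours shows that \eqref{eq:Phi-recursion} is just the conditioning identity for a product of independent uniforms. That is exactly the role it plays in closing the induction in Proposition~\ref{prop:zero-centered}. The paper's route instead yields the gamma-tail identity $\Prob\{G_d\ge u\}=\Phi_d(e^{-u})$, which is what Lemma~\ref{lem:tensorization} later cites. That identity also follows at once from your result, since a shape-$d$ gamma variable has the law of $-\log(U_1\cdots U_d)$.

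Your density step justifies $\Phi_d(t)=\int_0^t h_d$ via continuity at $0$ and integrability of $h_d$. This is slightly more careful than the paper's one-line appeal to differentiation on $(0,1)$.
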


\begin{proof}
Since $U_j>0$ almost surely, the variables $E_j = -\log U_j$ are well defined almost surely and are independent exponentials of rate one. Their sum $G_d = E_1+\cdots+E_d$ has gamma density $e^{-u}u^{d-1}/(d-1)!$, and repeated integration by parts gives
$$
\Prob\{G_d\ge u\} = e^{-u}\sum_{j = 0}^{d-1}\frac{u^j}{j!}, \quad u\ge0.
$$
Since $-\log(U_1\cdots U_d) = G_d$, this proves \eqref{eq:uniform-product}. Differentiation on $(0,1)$ gives \eqref{eq:uniform-product-density}.
\end{proof}

\begin{lemma}[Layer cake] \label{lem:layer-cake}
If $g\ge0$ is measurable on a probability space $(S,\nu)$, then
\begin{align} \label{eq:layer-cake}
\int_S\min\{1,g\} d\nu = \int_0^1\nu\{g>t\} dt.
\end{align}
\end{lemma}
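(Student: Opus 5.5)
The plan is to reduce the layer-cake identity to Tonelli's theorem applied to an indicator function on the product space $S\times[0,1]$. The starting point is the pointwise identity
$$
\min\{1,a\} = \int_0^1\1\{a>t\} dt,\quad a\ge0,
$$
which holds because the set $\{t\in[0,1]:t<a\}$ is $[0,\min\{1,a\})$, whose length is $\min\{1,a\}$. Applying this with $a = g(s)$ for each $s\in S$ expresses the integrand on the left-hand side of \eqref{eq:layer-cake} as a one-dimensional integral in $t$.

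Next, observe that the set $E = \{(s,t)\in S\times[0,1]: g(s)>t\}$ is measurable for the product $\sigma$-algebra. This follows because $(s,t)\mapsto g(s)-t$ is measurable as a difference of two functions each measurable in one coordinate. Its indicator is nonnegative, so Tonelli's theorem applies without any integrability hypothesis, including the case where $g$ takes the value $+\infty$ or is not integrable. Integrating the indicator of $E$ first in $t$ gives the left-hand side. Integrating first in $s$ gives $\int_0^1\nu\{g>t\} dt$, since the $t$-section of $E$ is exactly $\{g>t\}$. Equating the two iterated integrals proves \eqref{eq:layer-cake}.

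There is no real obstacle. The only points that need care are the measurability of $E$, which is needed to invoke Tonelli, and the use of the strict inequality $g>t$, which is what makes the $s$-section of $E$ equal to $[0,\min\{1,g(s)\})$ exactly. In any case, the endpoint $t = \min\{1,g(s)\}$ has $t$-measure zero, so a non-strict inequality would give the same integrals.
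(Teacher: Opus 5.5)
Your proposal is correct and is essentially the paper's argument: the pointwise identity $\min\{1,g(s)\}=\int_0^1\1_{\{g(s)>t\}}\,dt$ followed by Tonelli on $S\times[0,1]$. One small point: when $g(s)>1$ the section is $[0,1]$ rather than $[0,1)$, but it has the same length, so nothing changes.
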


\begin{proof}
Use $\min\{1,g(x)\} = \int_0^1\1_{\{g(x)>t\}} dt$ and Tonelli's theorem.
\end{proof}

\begin{lemma}[Profile domination and inverse moments] \label{lem:inverse-moments}
Let $Z>0$ almost surely and suppose
$$
\Prob\{Z\le t\}\le\Phi_d(t),\quad t\ge0.
$$
Then, for $0<\eta<1$,
\begin{align} \label{eq:inverse-moment}
\E Z^{-\eta}\le(1-\eta)^{-d}.
\end{align}
\end{lemma}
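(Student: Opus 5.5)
The plan is a stochastic-domination argument: the hypothesis says $Z$ is stochastically larger than the product $U_1\cdots U_d$ of Lemma~\ref{lem:Phi-product}, and $y\mapsto y^{-\eta}$ is decreasing, so $\E Z^{-\eta}$ is at most the same moment of the product, which can be computed exactly.

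First write the moment through the layer-cake formula,
$$
\E Z^{-\eta} = \int_0^\infty \Prob\{Z^{-\eta}>s\} ds = \int_0^\infty \Prob\{Z< s^{-1/\eta}\} ds.
$$
This is valid for a nonnegative random variable with values in $(0,\infty]$; note $Z^{-\eta}$ is finite almost surely because $Z>0$.

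Next bound the integrand using $\Prob\{Z<t\}\le\Prob\{Z\le t\}\le\Phi_d(t)$. By Lemma~\ref{lem:Phi-product}, $\Phi_d(t)=\Prob\{V\le t\}$ where $V=U_1\cdots U_d$. Lemma~\ref{lem:zero-set} is not needed here; we only need that $V$ has a density, so strict and non-strict inequalities for $V$ agree. Thus the integrand is at most $\Prob\{V^{-\eta}>s\}$. Integrating back gives
$$
\E Z^{-\eta}\le \E V^{-\eta}.
$$

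Finally, by independence,
$$
\E V^{-\eta} = \prod_{j=1}^d \E U_j^{-\eta} = \left(\int_0^1 u^{-\eta} du\right)^d = (1-\eta)^{-d},
$$
which is finite since $\eta<1$. This proves \eqref{eq:inverse-moment}.

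The only point needing care is the comparison step. It relies on handling the strict versus non-strict inequality, and on using the fact that $\Phi_d(t)=1$ for $t\ge1$ to cover the range $s\le1$; that part of the integral contributes $1$ on both sides. The rest is routine.
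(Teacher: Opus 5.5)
Your proof is correct and takes the same approach as the paper: dominate $Z$ stochastically from below by $U_1\cdots U_d$, use that $y\mapsto y^{-\eta}$ is decreasing, and compute $\E(U_1\cdots U_d)^{-\eta}=(1-\eta)^{-d}$ by independence. The only difference is in how the comparison is carried out. The paper uses a quantile coupling, whereas you integrate the tail inequality directly via the layer-cake formula, where the strict versus non-strict issue is harmless because $U_1\cdots U_d$ has a density.
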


\begin{proof}
Let $U = U_1\cdots U_d$ as in Lemma~\ref{lem:Phi-product}. For a distribution function $F$, let
$$
F^{-1}(v) = \inf\{t:F(t)\ge v\},\quad 0<v<1.
$$
The distribution-function inequality implies $F_Z^{-1}(v)\ge F_U^{-1}(v)$ for every $v\in(0,1)$. Coupling both generalized inverses with one uniform random variable and using that $x\mapsto x^{-\eta}$ is decreasing gives
$$
\E Z^{-\eta}\le\E U^{-\eta} = \prod_{j = 1}^d\int_0^1u^{-\eta} du = (1-\eta)^{-d}.
$$
This argument allows atoms; positivity rules out an atom at zero.
\end{proof}

\begin{lemma}[Inverse product profile] \label{lem:inverse-Phi}
For every $d\ge1$, there are constants $0<c_d\le C_d<\infty$ such that
\begin{align} \label{eq:inverse-Phi}
c_d\frac{\theta}{\log^{d-1}(e/\theta)} \le\Phi_d^{-1}(\theta) \le C_d\frac{\theta}{\log^{d-1}(e/\theta)}, \quad 0<\theta\le1.
\end{align}
\end{lemma}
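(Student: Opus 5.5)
We prove Lemma~\ref{lem:inverse-Phi} by establishing two-sided bounds on $\Phi_d$ itself, and then inverting them with the monotonicity from Lemma~\ref{lem:Phi-calculus}. Set $g(t) = t\log^{d-1}(e/t)$ on $(0,1]$. The first step is to show
$$
a_d\,g(t)\le\Phi_d(t)\le A_d\,g(t),\quad 0<t\le1 .
$$
Write $L=\log(1/t)\ge0$, so $\log(e/t)=1+L$. Then $\Phi_d(t)=t\sum_{j=0}^{d-1}L^j/j!$ and $g(t)=t(1+L)^{d-1}$. Expanding the binomial gives
$$
(1+L)^{d-1}=\sum_{j}\binom{d-1}{j}L^j .
$$
Comparing coefficients of $L^j$ term by term yields
$$
\frac{(1+L)^{d-1}}{(d-1)!}\le\sum_{j=0}^{d-1}\frac{L^j}{j!}\le(1+L)^{d-1}.
$$
Hence one may take $a_d=1/(d-1)!$ and $A_d=1$. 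The case $d=1$ is trivial.

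The second step is the inversion. Fix $\theta\in(0,1]$ and let $t=\Phi_d^{-1}(\theta)\in(0,1]$. From $\theta=\Phi_d(t)\ge t$ we get $t\le\theta$. From $\theta\le g(t)\le e\,t^{1/2}\cdot C$ (or more simply $\theta\le t(1+\log(1/t))^{d-1}\le C_d t^{1/2}$), we get $t\ge c_d\theta^2$. These crude bounds show that $\log(e/t)\asymp_d\log(e/\theta)$. Indeed, $\log(e/t)\ge\log(e/\theta)$, and $\log(e/t)\le\log(e/(c_d\theta^2))\le C_d'\log(e/\theta)$, using $\log(e/\theta)\ge1$.

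With this comparability in hand, the relation $\theta\asymp_d t\log^{d-1}(e/t)$ from the first step can be rewritten as
$$
t\asymp_d\frac{\theta}{\log^{d-1}(e/t)}\asymp_d\frac{\theta}{\log^{d-1}(e/\theta)}.
$$
This is exactly \eqref{eq:inverse-Phi}.

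The only mildly delicate step is this logarithm comparison. It needs an a priori polynomial lower bound $t\ge c_d\theta^{2}$, coming from $\log^{d-1}(e/t)\lesssim_d t^{-1/2}$ on $(0,1]$. Everything else is routine.
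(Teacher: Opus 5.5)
Your proof is correct, but it takes a different route from the paper's. The paper never compares $\Phi_d$ with $t\log^{d-1}(e/t)$ globally. Instead it substitutes the trial points $t=c\theta/L^{d-1}$ and $t=A_d\theta/L^{d-1}$, with $L=\log(e/\theta)$, directly into $\Phi_d$. It then bounds $\log(1/t)$ in terms of $L$, takes $c$ small and $A_d$ large, and concludes by monotonicity of $\Phi_d$. For the upper bound this works only when $L\ge L_d$, which is needed to get $t<1$ and $\log(1/t)\ge L/2$. The remaining range $e^{1-L_d}\le\theta\le1$ is handled by continuity and compactness.

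You instead first prove the uniform comparison $t\log^{d-1}(e/t)/(d-1)!\le\Phi_d(t)\le t\log^{d-1}(e/t)$ on all of $(0,1]$ by comparing coefficients. Using $\log(e/t)$ rather than $\log(1/t)$ is what lets this comparison hold all the way to $t=1$. You then invert through the a priori sandwich $c_d\theta^2\le\Phi_d^{-1}(\theta)\le\theta$, which gives $\log(e/t)\asymp_d\log(e/\theta)$.

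Your version has these advantages:
\begin{itemize}
\item It treats every $\theta\in(0,1]$ at once and needs no compactness step.
\item It gives explicit constants. For example, the upper bound holds with $C_d=(d-1)!$, since that direction uses only $t\le\theta$, i.e.\ $\log(e/t)\ge\log(e/\theta)$.
\item The polynomial lower bound $t\gtrsim_d\theta^2$ is needed only for the lower bound on $\Phi_d^{-1}$.
\end{itemize}
The paper's argument works with the inverse more directly, but its constants come out non-explicit because of the parameter choices and the compactness step.
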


\begin{proof}
Put $L = \log(e/\theta)\ge1$. We first prove the lower bound. Let $t = c\theta/L^{d-1}$ with $0<c\le1$. Since $\theta = e^{1-L}$,
$$
\log(1/t) = L-1+(d-1)\log L+\log(1/c).
$$
For $L\ge1$, the elementary bound $\log L\le L$ gives $\log(1/t)\le(d+\log(1/c))L$. Hence
$$
\Phi_d(t)\le\frac{c\theta}{L^{d-1}}\sum_{j = 0}^{d-1}\frac{((d+\log(1/c))L)^j}{j!}\le cC_d(1+\log(1/c))^{d-1}\theta.
$$
Since $c(1+\log(1/c))^{d-1}\to0$ as $c\downarrow0$, choosing $c = c_d>0$ sufficiently small makes the last expression at most $\theta$. Since $\Phi_d$ is increasing, $t\le\Phi_d^{-1}(\theta)$.

For the upper bound, let $t = A_d\theta/L^{d-1}$. Choose $L_d$ so large that, whenever $L\ge L_d$, one has $t<1$ and
$$
\log(1/t) = L-1+(d-1)\log L-\log A_d\ge L/2
$$
for a fixed sufficiently large constant $A_d$. Then
$$
\Phi_d(t)\ge\frac{t\log^{d-1}(1/t)}{(d-1)!}\ge\frac{A_d\theta}{2^{d-1}(d-1)!}.
$$
Taking $A_d\ge2^{d-1}(d-1)!$ gives $\Phi_d(t)\ge\theta$, so $\Phi_d^{-1}(\theta)\le t$ whenever $L\ge L_d$. On the remaining compact interval $e^{1-L_d}\le\theta\le1$, the continuous function $\Phi_d^{-1}(\theta)L^{d-1}/\theta$ is bounded. Increasing $C_d$ completes the proof.
\end{proof}

\subsection{Affine slicing of the cube}

The geometric input is Ball's cube-slicing theorem \cite{Ball86}.
\begin{proposition}[Affine slicing] \label{prop:affine-slicing}
Let $a\in\R^m\setminus\{0\}$, $b\in\R$, and let $Z$ be uniform on $[0,1]^m$. The law of $a\cdot Z+b$ has a density $h$ satisfying
\begin{align} \label{eq:affine-Linfty}
\norm h_{L^\infty(\R)}\le\frac{\sqrt2}{\norm a_2}.
\end{align}
Consequently, for $u\in\R$, $\rho\ge0$, and $1<p<\infty$,
\begin{align}
\Prob\{\abs{a\cdot Z+b-u}\le\rho\}
&\le\min\left\{1,\frac{2\sqrt2\rho}{\norm a_2}\right\},
 \label{eq:affine-smallball} \\
\norm h_{L^p(\R)}
&\le\left(\frac{\sqrt2}{\norm a_2}\right)^{1-1/p}.
 \label{eq:affine-Lp}
\end{align}
\end{proposition}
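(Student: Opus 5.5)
The plan is to reduce everything to Ball's cube-slicing theorem \cite{Ball86} through a rotation-and-rescaling argument. The quantitative input is the following form of Ball's theorem: for every unit vector $\theta\in\R^m$ and every hyperplane $H$ through the center of the cube $[-\tfrac12,\tfrac12]^m$ orthogonal to $\theta$, one has $\vol_{m-1}(H\cap[-\tfrac12,\tfrac12]^m)\le\sqrt2$. Central sections are the largest among parallel ones, which follows from Brunn's principle since the cube is symmetric and convex. Hence every affine hyperplane orthogonal to $\theta$ meets the cube in $(m-1)$-volume at most $\sqrt2$.

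Fix $a\ne0$ and $b$, and put $\theta=a/\norm a_2$. The random variable $\theta\cdot Z$ has density
$$
g(y)=\vol_{m-1}\bigl(\{z\in[0,1]^m:\theta\cdot z=y\}\bigr),
$$
which is the coarea formula, or Fubini after an orthogonal change of variables. This density is absolutely continuous and bounded by $\sqrt2$, because translating $[0,1]^m$ to the centered cube does not change section volumes. Next, $a\cdot Z+b=\norm a_2(\theta\cdot Z)+b$ is an affine image of $\theta\cdot Z$. Its density is therefore $h(y)=\norm a_2^{-1}g((y-b)/\norm a_2)$, which gives \eqref{eq:affine-Linfty}.

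The small-ball bound \eqref{eq:affine-smallball} follows by integrating $h$ over an interval of length $2\rho$, which gives at most $2\rho\norm h_\infty\le 2\sqrt2\rho/\norm a_2$, and using the trivial bound $1$ for probabilities. For \eqref{eq:affine-Lp}, interpolate between $\norm h_{L^1}=1$ and the $L^\infty$ bound:
$$
\int h^p\le\norm h_\infty^{p-1}\int h=\norm h_\infty^{p-1}.
$$
Taking $p$-th roots gives $\norm h_{L^p}\le\norm h_\infty^{1-1/p}$.

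The only substantive step is the appeal to Ball's theorem together with the reduction from arbitrary affine sections to central ones. I would justify the latter by Brunn--Minkowski: the function $t\mapsto\vol_{m-1}(\{\theta\cdot z=t\}\cap K)^{1/(m-1)}$ is concave on its support, and for symmetric $K$ it is also even. A concave even function attains its maximum at $0$. Everything else is routine change of variables.
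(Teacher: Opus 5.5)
Your proposal is correct and follows the paper's proof essentially step for step: the coarea formula for the density, Brunn's concavity plus central symmetry to reduce to central sections, Ball's bound $\sqrt2$, and then the same small-ball and $L^1$--$L^\infty$ interpolation arguments. The only detail the paper adds is a separate treatment of $m=1$, where the exponent $1/(m-1)$ in Brunn's principle is undefined but the claim is immediate because $a_1Z_1+b$ is uniform on an interval of length $\abs{a_1}$.
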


\begin{proof}
For $m = 1$, the random variable $a_1Z_1+b$ is uniform on an interval of length $\abs{a_1}$, so the assertions follow directly. Assume $m\ge2$.

Let $L(x) = a\cdot x+b$, and let $\mathcal H^{m-1}$ denote $(m-1)$-dimensional Hausdorff measure. The coarea formula applied to $L$ gives, for every nonnegative Borel function $\varphi$,
$$
\int_{[0,1]^m}\varphi(L(x))dx = \int_\R\varphi(t)\frac{\mathcal H^{m-1}([0,1]^m\cap\{x:L(x) = t\})}{\norm a_2}dt.
$$
Thus the law of $L(Z)$ has the density
$$
h(t) = \frac{\mathcal H^{m-1}([0,1]^m\cap\{x:a\cdot x+b = t\})}{\norm a_2}
$$
for almost every $t$.

Translate $[0,1]^m$ to the centered cube $K = [-1/2,1/2]^m$, and let $A(s)$ be the $(m-1)$-dimensional volume of the section $K\cap\{x:a\cdot x = s\}$. By Brunn's concavity theorem \cite{Gardner02}, $A(s)^{1/(m-1)}$ is concave on its support. Since $K$ is centrally symmetric, $A$ is even, and hence $A(s)\le A(0)$ for every $s$. Ball's cube-slicing theorem \cite{Ball86} gives $A(0)\le\sqrt2$. Consequently,
$$
\norm{h}_{L^\infty(\R)}\le\frac{\sqrt2}{\norm a_2},
$$
which is \eqref{eq:affine-Linfty}.

The event $\abs{L(Z)-u}\le\rho$ is the inverse image of an interval of length $2\rho$. Therefore
$$
\Prob\{\abs{L(Z)-u}\le\rho\}\le2\rho\norm{h}_{L^\infty(\R)},
$$
and the probability is also at most one. This proves \eqref{eq:affine-smallball}. Finally, since $h$ is a probability density,
$$
\norm{h}_{L^p(\R)}^p = \int_\R h(t)^pdt\le\norm{h}_{L^\infty(\R)}^{p-1}\int_\R h(t)dt = \norm{h}_{L^\infty(\R)}^{p-1}.
$$
Taking $p$th roots gives \eqref{eq:affine-Lp}.
\end{proof}

The density proof requires a jointly measurable choice of the conditional affine densities.

\begin{lemma}[Measurable affine disintegration] \label{lem:disintegration}
Let $(\mathsf Y,\mathcal A,\nu)$ be a probability space, let $R:\mathsf Y\to\R$ and $a:\mathsf Y\to\R^b$ be measurable, and assume $a(y)\ne0$ for $\nu$-almost every $y$. If $Y\sim\nu$, $Z\sim\operatorname{Unif}[0,1]^b$ is independent of $Y$, and
$$
T = R(Y)+a(Y)\cdot Z,
$$
then there is a jointly measurable $h:\mathsf Y\times\R\to[0,\infty)$ such that $h_y = h(y,\cdot)$ is a density of $R(y)+a(y)\cdot Z$ for almost every $y$ and
\begin{align} \label{eq:mixture-density}
f_T(t) = \int_{\mathsf Y}h_y(t) d\nu(y)
\end{align}
is a density of $T$. Moreover, if $1\le p<\infty$ and
$$
\int_{\mathsf Y}\norm{h_y}_{L^p(\R)}d\nu(y)<\infty,
$$
then $f_T\in L^p(\R)$ and
\begin{align} \label{eq:mixture-Minkowski}
\norm{f_T}_{L^p(\R)}
\le
\int_{\mathsf Y}\norm{h_y}_{L^p(\R)}d\nu(y).
\end{align}
\end{lemma}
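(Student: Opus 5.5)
The plan is to write down an explicit jointly measurable kernel from the closed-form slice-area formula, check that it gives conditional densities, and then get \eqref{eq:mixture-density} and \eqref{eq:mixture-Minkowski} from Fubini and Minkowski's integral inequality.

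\emph{Construction of $h$.} On the measurable set $G=\{y:a(y)\ne0\}$, which has full $\nu$-measure, define
$$
h(y,t)=\int_{[0,1]^b}\1\{\ldots\}
$$
not by a formula involving Hausdorff measure, which is awkward for measurability, but as a limit of difference quotients:
$$
h(y,t)=\limsup_{k\to\infty}k\,\mu_b\{z\in[0,1]^b: t\le R(y)+a(y)\cdot z<t+1/k\}.
$$
Set $h(y,t)=0$ for $y\notin G$, and replace the value by zero wherever the $\limsup$ is infinite.

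\emph{Joint measurability.} Write $F(y,t)=\mu_b\{z: R(y)+a(y)\cdot z<t\}$. By Tonelli applied to the Borel set $\{(r,\alpha,t,z): r+\alpha\cdot z<t\}$, the function $(r,\alpha,t)\mapsto\mu_b\{z: r+\alpha\cdot z<t\}$ is Borel. Composing with the measurable map $(y,t)\mapsto (R(y),a(y),t)$ shows that $F$ is $\mathcal A\otimes\mathcal B(\R)$-measurable. Each difference quotient $k\bigl(F(y,t+1/k)-F(y,t)\bigr)$ is therefore jointly measurable, and so is their $\limsup$.

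\emph{Conditional densities.} Fix $y\in G$. By Proposition~\ref{prop:affine-slicing}, the law of $R(y)+a(y)\cdot Z$ has a bounded density. Hence $F(y,\cdot)$ is Lipschitz, and by the Lebesgue differentiation theorem its difference quotients converge almost everywhere to a density. So $h_y$ is a density of $R(y)+a(y)\cdot Z$.

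\emph{Mixture formula \eqref{eq:mixture-density}.} For Borel $A\subseteq\R$, independence of $Y$ and $Z$ together with Fubini gives
$$
\Prob\{T\in A\}=\int_{\mathsf Y}\Prob\{R(y)+a(y)\cdot Z\in A\}\,d\nu(y)=\int_{\mathsf Y}\int_A h(y,t)\,dt\,d\nu(y)=\int_A\int_{\mathsf Y}h(y,t)\,d\nu(y)\,dt.
$$
The final exchange is Tonelli, which applies because $h$ is nonnegative and jointly measurable. Thus $f_T(t)=\int_{\mathsf Y}h_y(t)\,d\nu(y)$ is a density of $T$; in particular it is finite for almost every $t$.

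\emph{$L^p$ bound \eqref{eq:mixture-Minkowski}.} This is Minkowski's integral inequality for the nonnegative jointly measurable function $h$ on the product of $(\mathsf Y,\nu)$ with $(\R,\vol_1)$. If one prefers to avoid citing it, pair against $g\in L^{p'}$ with $g\ge0$ and $\norm g_{L^{p'}}\le1$. By Tonelli and Hölder,
$$
\int_\R f_T\,g\,dt=\int_{\mathsf Y}\int_\R h_y\,g\,dt\,d\nu(y)\le\int_{\mathsf Y}\norm{h_y}_{L^p(\R)}\,d\nu(y).
$$
Taking the supremum over such $g$ gives \eqref{eq:mixture-Minkowski}. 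For this duality step it is cleanest to first truncate, using $\min(f_T,n)\1_{[-n,n]}$, and then let $n\to\infty$ by monotone convergence.

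\emph{Main obstacle.} The only delicate point is joint measurability of the kernel. It is resolved by defining $h$ through difference quotients of the jointly measurable distribution function $F$, rather than through slice areas of the cube.
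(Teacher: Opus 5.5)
Your proof is correct, but it builds the kernel differently from the paper.

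\textbf{The paper's construction.} The paper writes down an explicit formula. For each $y$ with $a(y)\ne0$, it measurably selects the smallest index $j$ at which $\abs{a_j(y)}$ is maximal, so that $a_j(y)\ne0$. It then conditions on the remaining coordinates $z_{J_j}$, which makes $R(y)+a(y)\cdot Z$ uniform on an interval of length $\abs{a_j(y)}$. Finally it defines $h(y,t)$ as the $\mu_{J_j}$-average of $\1_{I_j(y,z_{J_j})}(t)/\abs{a_j(y)}$. Joint measurability is then plain parameter integration. That $h_y$ is a density for every such $y$ is read off from the formula, with no differentiation theorem and no prior knowledge that the fiber laws are absolutely continuous.

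\textbf{Your construction.} You differentiate the jointly measurable conditional distribution function $F(y,t)$. This avoids the pivot-index selection entirely and is more general: it yields a jointly measurable density kernel for any measurable family of absolutely continuous laws. The cost is the Lebesgue differentiation theorem, and you need absolute continuity of each fiber law as an input.

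\textbf{A simplification.} For that input, citing Proposition~\ref{prop:affine-slicing} (coarea formula plus Ball's theorem) is heavier than necessary. Your differentiation step uses only absolute continuity, not boundedness of the density or the Lipschitz property of $F(y,\cdot)$. Absolute continuity already follows from conditioning on all coordinates but one, exactly as in the paper.

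The mixture step and the Minkowski step coincide with the paper's. Your duality-with-truncation argument is a valid self-contained substitute for citing Minkowski's integral inequality. Finally, delete the abandoned first display $h(y,t)=\int_{[0,1]^b}\1\{\ldots\}$; it is a leftover fragment.
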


\begin{proof}[Proof of Lemma~\ref{lem:disintegration}]
Put $M(y) = \norm{a(y)}_\infty$. For $1\le j\le b$, define
$$
D_j = \{y:a(y)\ne0, \abs{a_j(y)} = M(y), \abs{a_i(y)}<M(y)\text{ for }i<j\}.
$$
The sets $D_j$ are measurable, pairwise disjoint, and their union is $\{a\ne0\}$. Thus the smallest index at which the maximum coefficient is attained is selected measurably.

Fix $j$ and put $J_j = [b]\setminus\{j\}$. For $y\in D_j$ and $z_{J_j}\in[0,1]^{J_j}$, define
$$
c_j(y,z_{J_j}) = R(y)+\sum_{i\in J_j}a_i(y)z_i.
$$
Let $I_j(y,z_{J_j})$ be the closed interval with endpoints $c_j(y,z_{J_j})$ and $c_j(y,z_{J_j})+a_j(y)$. Define $h(y,t) = 0$ when $a(y) = 0$, and when $a(y)\ne0$ set
\begin{align} \label{eq:explicit-h}
h(y,t) = \sum_{j = 1}^b\1_{D_j}(y)\int_{[0,1]^{J_j}}\frac{\1_{I_j(y,z_{J_j})}(t)}{\abs{a_j(y)}}d\mu_{J_j}(z_{J_j}).
\end{align}
The endpoints of $I_j(y,z_{J_j})$ are measurable functions of $(y,z_{J_j})$. Hence the integrand in \eqref{eq:explicit-h} is jointly measurable in $(y,z_{J_j},t)$, and parameter integration shows that $h$ is jointly measurable in $(y,t)$.

Fix $y\in D_j$. Conditional on $Z_{J_j} = z_{J_j}$,
$$
R(y)+a(y)\cdot Z = c_j(y,z_{J_j})+a_j(y)Z_j.
$$
Since $a_j(y)\ne0$, the last random variable is uniform on $I_j(y,z_{J_j})$ and has density
$$
t\mapsto\frac{\1_{I_j(y,z_{J_j})}(t)}{\abs{a_j(y)}}.
$$
Averaging this conditional density over $z_{J_j}$ proves that $h_y = h(y,\cdot)$ is a density of $R(y)+a(y)\cdot Z$.

For every nonnegative Borel function $\varphi$, independence and Tonelli's theorem give
$$
\E\varphi(T) = \int_{\mathsf Y}\int_\R\varphi(t)h(y,t)dtd\nu(y) = \int_\R\varphi(t)\left(\int_{\mathsf Y}h(y,t)d\nu(y)\right)dt.
$$
Therefore $f_T(t) = \int_{\mathsf Y}h(y,t)d\nu(y)$ is a density of $T$, proving \eqref{eq:mixture-density}. Finally, under the stated finiteness assumption, Minkowski's integral inequality gives
$$
\norm{f_T}_{L^p(\R)} = \norm{\int_{\mathsf Y}h_y d\nu(y)}_{L^p(\R)}\le\int_{\mathsf Y}\norm{h_y}_{L^p(\R)}d\nu(y).
$$
This is \eqref{eq:mixture-Minkowski}.
\end{proof}

\begin{proposition}[Affine base case] \label{prop:affine-base}
Let $L$ be a nonconstant affine polynomial on $[0,1]^n$, let $X\sim\mu_n$, and let $f_L$ denote the density of $L(X)$. Then
\begin{align} \label{eq:affine-base-smallball}
\mu_n\{\abs L\le\rho\} \le\Phi_1 \left(4\sqrt{2n}\frac{\rho}{\norm L_\infty}\right), \quad \rho\ge0,
\end{align}
and its density satisfies
\begin{align} \label{eq:affine-base-density}
\norm{f_L}_{L^p(\R)} \le\left(\frac{\sqrt{2n}}{\osc(L)}\right)^{1-1/p}, \quad 1<p<\infty.
\end{align}
\end{proposition}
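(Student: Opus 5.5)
Write $L(x)=a\cdot x+b$ with $a\in\R^n\setminus\{0\}$, since $L$ is nonconstant. The plan is to reduce both assertions to Proposition~\ref{prop:affine-slicing} applied with $m=n$ and $Z=X$. What remains is to compare $\norm a_2$ with $\norm L_\infty$ and with $\osc(L)$.

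\textbf{Oscillation.} By Lemma~\ref{lem:vertex}, the extrema of $L$ are attained at vertices. The maximum is taken at the vertex with $x_i=1$ exactly when $a_i>0$, and the minimum at the complementary vertex. Hence
$$
\osc(L)=\norm a_1\le\sqrt n\,\norm a_2 .
$$
Inserting $1/\norm a_2\le\sqrt n/\osc(L)$ into \eqref{eq:affine-Lp} gives
$$
\norm{f_L}_{L^p}\le\left(\frac{\sqrt{2n}}{\osc(L)}\right)^{1-1/p},
$$
which is \eqref{eq:affine-base-density}.

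\textbf{Small ball.} Since $\norm L_\infty\ge\osc(L)/2$ only holds when $L$ takes both signs, I will split into two cases according to the size of $\abs b$ relative to $\norm L_\infty$.

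In the first case, $\osc(L)\ge\norm L_\infty/2$. Then \eqref{eq:affine-smallball} with $u=0$ gives
$$
\mu_n\{\abs L\le\rho\}\le\min\left\{1,\frac{2\sqrt2\,\rho\sqrt n}{\osc(L)}\right\}\le\min\left\{1,\frac{4\sqrt{2n}\,\rho}{\norm L_\infty}\right\},
$$
and the right-hand side equals $\Phi_1(4\sqrt{2n}\,\rho/\norm L_\infty)$.

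In the second case, $\osc(L)<\norm L_\infty/2$. Then $L$ does not change sign: if it took both signs, then $\osc(L)\ge\norm L_\infty$. So $\min_{[0,1]^n}\abs L\ge\norm L_\infty-\osc(L)>\norm L_\infty/2$. Consequently, for $\rho<\norm L_\infty/2$ the set $\{\abs L\le\rho\}$ is empty. For $\rho\ge\norm L_\infty/2$ the argument $4\sqrt{2n}\,\rho/\norm L_\infty$ is at least $2\sqrt{2n}\ge1$, so $\Phi_1$ of it equals $1$. In either subcase the bound holds trivially.

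\textbf{Main obstacle.} The only delicate point is this normalization mismatch between $\norm L_\infty$ and $\osc(L)$. The case split above handles it, and the factor $4$ in \eqref{eq:affine-base-smallball} leaves enough room for the constant $2\sqrt2\cdot2$ arising in the first case.
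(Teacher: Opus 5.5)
Your proof is correct and is essentially the paper's argument. Both parts reduce to Proposition~\ref{prop:affine-slicing} via $\osc(L)=\norm a_1\le\sqrt n\,\norm a_2$. For the small-ball bound, the paper gets $\osc(L)\ge\norm L_\infty/2$ in the nontrivial regime as follows: it normalizes $\norm L_\infty=1$, assumes $\rho<1/2$ and a nonempty sublevel set, and notes that the range interval meets $[-\rho,\rho]$ while having an endpoint of absolute value one, so $\osc(L)\ge 1-\rho$ --- this is the contrapositive of your case split.

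One slip in your motivating remark: $\norm L_\infty\ge\osc(L)/2$ always holds. What can fail is the reverse comparison $\osc(L)\ge\norm L_\infty/2$, which is what your case analysis actually handles.
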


\begin{proof}
Write $L(x) = a\cdot x+b$. Since $L$ is nonconstant, $a\ne0$. We first prove \eqref{eq:affine-base-smallball} and normalize $\norm L_\infty = 1$. If $4\sqrt{2n}\rho\ge1$, then the right-hand side equals one and the estimate is trivial. Thus we may assume $\rho<1/(4\sqrt{2n})\le1/2$. If the sublevel set is empty, there is nothing to prove. Otherwise the interval $L([0,1]^n)$ meets $[-\rho,\rho]$. By Lemma~\ref{lem:vertex}, one endpoint of this interval has absolute value one. Therefore its length is at least $1-\rho$, and hence
$$
\osc(L)\ge1-\rho\ge\frac12.
$$
For an affine function on the cube, the range length is
$$
\osc(L) = \sum_{i = 1}^n\abs{a_i} = \norm a_1.
$$
Cauchy-Schwarz now gives $\norm a_2\ge\norm a_1/\sqrt n\ge1/(2\sqrt n)$. Proposition~\ref{prop:affine-slicing} yields
$$
\mu_n\{\abs L\le\rho\}\le\frac{2\sqrt2\rho}{\norm a_2}\le4\sqrt{2n}\rho = \Phi_1(4\sqrt{2n}\rho),
$$
which proves \eqref{eq:affine-base-smallball} under the normalization. Scaling gives the general case.

For the density estimate, no normalization is needed. The identity $\osc(L) = \norm a_1$ and Cauchy-Schwarz give $\norm a_2\ge\osc(L)/\sqrt n$. Substitution into \eqref{eq:affine-Lp} proves \eqref{eq:affine-base-density}.
\end{proof}

\subsection{Block contractions}

Let $P$ be a multi-affine polynomial in the variables indexed by $[n]$, let $\cB\in\Adm(P)$, and fix $B\in\cB$. Write $B^c = [n]\setminus B$. Since no monomial of $P$ contains two variables from $B$, there are unique multi-affine polynomials
$$
R_B:[0,1]^{B^c}\to\R,\quad Q_{B,i}:[0,1]^{B^c}\to\R\quad(i\in B)
$$
such that
\begin{align} \label{eq:block-representation}
P(x) = R_B(x_{B^c})+\sum_{i\in B}x_iQ_{B,i}(x_{B^c}).
\end{align}
If $y = x_{B^c}$ and $z = x_B$, then
$$
P(y,z) = R_B(y)+a_B(y)\cdot z,\quad a_B(y) = (Q_{B,i}(y))_{i\in B}\in\R^B.
$$
For $w = (w_i)_{i\in B}\in\R^B$, define the scalar $B$-contraction
\begin{align} \label{eq:contraction}
Q_{B,w}(y) = a_B(y)\cdot w = \sum_{i\in B}w_iQ_{B,i}(y),\quad y\in[0,1]^{B^c}.
\end{align}

We shall compare partitions that may live on different finite ground sets. For two such partitions $\cA$ and $\cC$, write $\cA\preccurlyeq\cC$ if there is an injection $\iota:\cA\to\cC$ such that $A\subseteq\iota(A)$ for every $A\in\cA$. Thus $\cA\preccurlyeq\cC$ means that $\cA$ is obtained from a subfamily of $\cC$ by deleting variables from its blocks and discarding the blocks that become empty.

\begin{lemma}[Inherited partition] \label{lem:inherited}
For every $i\in B$ and $w\in\R^B$, the polynomials $Q_{B,i}$ and $Q_{B,w}$ are multi-affine and have degree at most $\deg(P)-1$. Assume that $Q_{B,w}$ is nonconstant, and let $A_{B,w}\subseteq B^c$ be its active-variable set. Then
\begin{align} \label{eq:inherited-family}
\cB_{B,w} = \{C\cap A_{B,w}:C\in\cB\setminus\{B\}, C\cap A_{B,w}\ne\varnothing\}
\end{align}
is an admissible partition of $A_{B,w}$ for $Q_{B,w}$. Moreover,
\begin{align} \label{eq:inherited-order}
\cB_{B,w}\preccurlyeq\cB\setminus\{B\},\quad \sigma(\cB_{B,w})\le\sigma(\cB)-\sqrt{\abs B}.
\end{align}
\end{lemma}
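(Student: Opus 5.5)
The plan is to read everything off the monomial expansion \eqref{eq:expansion} combined with the block representation \eqref{eq:block-representation}. Since $\cB$ is admissible, each monomial $m_S$ with $a_S\ne0$ meets $B$ in at most one index. The monomials with $S\cap B=\varnothing$ go into $R_B$. Those with $S\cap B=\{i\}$ contribute $x_i\,a_S\,m_{S\setminus\{i\}}(x_{B^c})$ to the term $x_iQ_{B,i}$. Hence
$$
Q_{B,i}(y)=\sum_{S\ni i,\ S\cap B=\{i\}}a_S\,m_{S\setminus\{i\}}(y).
$$
This is a multi-affine polynomial on $[0,1]^{B^c}$. Each of its monomials is indexed by $S\setminus\{i\}$, of size $\abs S-1\le\deg(P)-1$. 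Uniqueness follows from uniqueness of the multi-affine expansion. The contraction $Q_{B,w}$ is a linear combination of the $Q_{B,i}$, so it is multi-affine of degree at most $\deg(P)-1$. This settles the first claim.

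For admissibility, I take a monomial $T$ appearing in $Q_{B,w}$ with nonzero coefficient. Its coefficient is $\sum_{i\in B}w_ia_{T\cup\{i\}}$, so some $a_{T\cup\{i\}}\ne0$ with $i\in B$ and $T\subseteq B^c$. For any $C\in\cB\setminus\{B\}$ we get $\abs{T\cap C}\le\abs{(T\cup\{i\})\cap C}\le1$ by admissibility of $\cB$. Every monomial of $Q_{B,w}$ uses only active variables, so $T\subseteq A_{B,w}$, and therefore $\abs{T\cap(C\cap A_{B,w})}\le1$.

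Next I check that $\cB_{B,w}$ is a partition of $A_{B,w}$. The blocks $C\in\cB\setminus\{B\}$ partition $B^c\supseteq A_{B,w}$. Their traces on $A_{B,w}$ are therefore disjoint and cover $A_{B,w}$, and discarding the empty traces leaves a partition. Since $Q_{B,w}$ is nonconstant, $A_{B,w}\neq\varnothing$, so the partition is nonempty.

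For \eqref{eq:inherited-order}, I use the injection $\iota(C\cap A_{B,w})=C$. It is well defined and injective because distinct $C$ have disjoint nonempty traces, and clearly $C\cap A_{B,w}\subseteq C$. This gives $\cB_{B,w}\preccurlyeq\cB\setminus\{B\}$. Monotonicity of $\sqrt{\cdot}$ and injectivity then give
$$
\sigma(\cB_{B,w})=\sum\sqrt{\abs{C\cap A_{B,w}}}\le\sum_{C\in\cB\setminus\{B\}}\sqrt{\abs C}=\sigma(\cB)-\sqrt{\abs B}.
$$

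There is no real obstacle here. The only point requiring care is the admissibility step: a cancellation in $\sum_iw_ia_{T\cup\{i\}}$ can only remove monomials, never create new ones, and the active-variable restriction is handled by noting that every surviving monomial lies inside $A_{B,w}$.
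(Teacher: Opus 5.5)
Your proposal is correct and follows the paper's proof essentially step for step. Both arguments group the monomials of the square-free expansion by their unique index in $B$. Both then observe that the linear combination $Q_{B,w}$ can cancel monomial supports but cannot create new ones, and both use the trace map $C\cap A_{B,w}\mapsto C$ for $\preccurlyeq$ and the $\sigma$ bound. You spell out the coefficient $\sum_{i\in B}w_ia_{T\cup\{i\}}$, the inclusion $T\subseteq A_{B,w}$, and the well-definedness of the injection a bit more explicitly than the paper does.
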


\begin{proof}
Write $P$ in the square-free expansion \eqref{eq:expansion}. By admissibility, the support of each nonzero monomial either avoids $B$ or meets $B$ in a unique index $i\in B$. Grouping the monomials according to these alternatives gives \eqref{eq:block-representation}; uniqueness follows from uniqueness of the square-free monomial expansion.

Every monomial contributing to $Q_{B,i}$ is obtained from a monomial of $P$ containing $x_i$ by removing the factor $x_i$. Thus $Q_{B,i}$ is multi-affine and $\deg(Q_{B,i})\le\deg(P)-1$. The same assertions hold for $Q_{B,w}$ because it is a linear combination of the $Q_{B,i}$.

A linear combination may cancel monomials but cannot create a new monomial support. Hence every support occurring in $Q_{B,w}$ meets each block $C\in\cB\setminus\{B\}$ in at most one variable. After the inactive variables of $Q_{B,w}$ are discarded, the nonempty intersections $C\cap A_{B,w}$ form the partition \eqref{eq:inherited-family}, which is admissible for $Q_{B,w}$. The inclusion $C\cap A_{B,w}\subseteq C$ gives \eqref{eq:inherited-order}. Finally,
$$
\sigma(\cB_{B,w}) = \sum_{{C\in\cB\setminus\{B\}, C\cap A_{B,w}\ne\varnothing}}\sqrt{\abs{C\cap A_{B,w}}}\le\sum_{C\in\cB\setminus\{B\}}\sqrt{\abs C} = \sigma(\cB)-\sqrt{\abs B}.
$$
\end{proof}

\begin{lemma}[Weighted block slope principle] \label{lem:block-slope}
Let $P$ be a nonconstant multi-affine polynomial, let $\cB\in\Adm(P)$, let $x,z\in[0,1]^n$, and let $(c_B)_{B\in\cB}$ be positive numbers. Then there are $B\in\cB$ and $w\in\{-1,1\}^B$ such that
\begin{align} \label{eq:block-slope}
\norm{Q_{B,w}}_\infty\ge\frac{c_B}{\sum_{C\in\cB}c_C}\abs{P(z)-P(x)}.
\end{align}
\end{lemma}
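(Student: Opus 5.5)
Order the blocks as $B_1,\ldots,B_k$ and move from $x$ to $z$ one block at a time. Put $y^{(0)}=x$, and let $y^{(j)}$ be obtained from $y^{(j-1)}$ by replacing the coordinates in $B_j$ by those of $z$; then $y^{(k)}=z$. The sum telescopes:
$$
P(z)-P(x)=\sum_{j=1}^k\bigl(P(y^{(j)})-P(y^{(j-1)})\bigr).
$$
The points $y^{(j)}$ and $y^{(j-1)}$ differ only in the block $B_j$ and agree off $B_j$. Write $v=y^{(j-1)}_{B_j^c}$. By the block representation \eqref{eq:block-representation},
$$
P(y^{(j)})-P(y^{(j-1)})=a_{B_j}(v)\cdot(z_{B_j}-x_{B_j}).
$$
Here $\delta=z_{B_j}-x_{B_j}\in[-1,1]^{B_j}$.

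Next we replace $\delta$ by a sign vector. The map $\delta\mapsto a_{B_j}(v)\cdot\delta$ is linear, so its maximum absolute value over the cube $[-1,1]^{B_j}$ is attained at a vertex $w\in\{-1,1\}^{B_j}$. Concretely, $w_i=\operatorname{sign}Q_{B_j,i}(v)$ gives $\abs{a_{B_j}(v)\cdot\delta}\le\norm{a_{B_j}(v)}_1=Q_{B_j,w}(v)$. Hence
$$
\abs{P(y^{(j)})-P(y^{(j-1)})}\le\max_{w\in\{-1,1\}^{B_j}}\norm{Q_{B_j,w}}_\infty.
$$
Write $M_B=\max_w\norm{Q_{B,w}}_\infty$. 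The triangle inequality then gives $\abs{P(z)-P(x)}\le\sum_{B\in\cB}M_B$.

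It remains to introduce the weights, and we argue by contradiction. Suppose that for every $B$ we had $M_B<c_B\abs{P(z)-P(x)}/\sum_Cc_C$. Summing over $B$ gives $\sum_BM_B<\abs{P(z)-P(x)}$ if $P(z)\ne P(x)$, which contradicts the previous bound. If instead $P(z)=P(x)$, the right-hand side of \eqref{eq:block-slope} is zero and any $(B,w)$ works.

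So some $B$ satisfies $M_B\ge c_B\abs{P(z)-P(x)}/\sum_Cc_C$. Choosing $w$ attaining $M_B$ gives \eqref{eq:block-slope}. The only delicate step is the sign-vector reduction, which requires the $\ell^1$-duality observation above. The cost of that step is absorbed by the fact that we take a supremum over the full cube $[0,1]^{B^c}$ rather than evaluating at the particular point $v$.
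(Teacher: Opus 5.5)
Your proof is correct and is essentially the paper's argument: you telescope block by block, bound each increment by $\norm{a_{B_j}(v)}_1=Q_{B_j,w}(v)$ for the sign vector matching the coefficients, and finish with a weighted pigeonhole. The only cosmetic difference is that you pigeonhole on $M_B=\max_w\norm{Q_{B,w}}_\infty$ rather than on the individual increments. Where a coefficient $Q_{B_j,i}(v)$ vanishes, you should take $w_i=\pm1$ arbitrarily instead of $\operatorname{sign}(0)=0$, so that $w\in\{-1,1\}^{B_j}$.
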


\begin{proof}
Order the blocks as $B_1,\ldots,B_q$. For $0\le j\le q$, let $z^{(j)}\in[0,1]^n$ be the point whose coordinates agree with $z$ on $B_1\cup\cdots\cup B_j$ and with $x$ on the remaining blocks. Thus $z^{(0)} = x$ and $z^{(q)} = z$. Put
$$
\Delta_j = P(z^{(j)})-P(z^{(j-1)}),\quad 1\le j\le q.
$$
During the passage from $z^{(j-1)}$ to $z^{(j)}$, only the coordinates in $B_j$ change. Let $\xi_j\in[0,1]^{B_j^c}$ be their common complementary coordinate vector. The block representation gives
$$
\Delta_j = a_{B_j}(\xi_j)\cdot(z_{B_j}-x_{B_j}).
$$
Since every coordinate of $z_{B_j}-x_{B_j}$ has absolute value at most one,
\begin{align} \label{eq:increment-l1}
\abs{\Delta_j}\le\norm{a_{B_j}(\xi_j)}_1.
\end{align}
The telescoping identity and the triangle inequality give
$$
\abs{P(z)-P(x)}\le\sum_{j = 1}^q\abs{\Delta_j}.
$$
If $P(z) = P(x)$, the conclusion is trivial. Otherwise, there is an index $j$ such that
$$
\abs{\Delta_j}\ge\frac{c_{B_j}}{\sum_{C\in\cB}c_C}\abs{P(z)-P(x)};
$$
indeed, the strict reverse inequality for every $j$ would contradict the preceding estimate. Fix such a block $B = B_j$. Choose $w\in\{-1,1\}^B$ so that $w_iQ_{B,i}(\xi_j) = \abs{Q_{B,i}(\xi_j)}$ for every $i\in B$, choosing either sign when the coefficient is zero. Then
$$
Q_{B,w}(\xi_j) = \sum_{i\in B}\abs{Q_{B,i}(\xi_j)} = \norm{a_B(\xi_j)}_1\ge\abs{\Delta_j},
$$
where the last inequality is \eqref{eq:increment-l1}. Taking the supremum of $\abs{Q_{B,w}}$ over $[0,1]^{B^c}$ proves \eqref{eq:block-slope}.
\end{proof}

The final observation connects the scalar sign contraction to the Euclidean slope required by cube slicing.

\begin{lemma}[Block fiber estimates] \label{lem:block-fiber}
Let $J$ be a finite coordinate set, let
$$
P(y,z) = R(y)+a(y)\cdot z,\quad y\in[0,1]^J,\quad z\in[0,1]^b,
$$
and let $Z\sim\mu_b$. Fix $w\in\{-1,1\}^b$ and put $Q(y) = a(y)\cdot w$. For fixed $y$, every $u\in\R$, and $\rho\ge0$,
\begin{align} \label{eq:block-fiber-smallball}
\Prob_Z\{\abs{P(y,Z)-u}\le\rho\}\le\min\left\{1,\frac{2\sqrt{2b}\rho}{\abs{Q(y)}}\right\},
\end{align}
where $\Prob_Z$ denotes probability with respect to $Z$ and the quotient is $+\infty$ if $Q(y) = 0$. If $a(y)\ne0$ and $h_y$ is the density of $R(y)+a(y)\cdot Z$, then
\begin{align} \label{eq:block-fiber-Lp}
\norm{h_y}_{L^p(\R)}\le\left(\frac{\sqrt{2b}}{\abs{Q(y)}}\right)^{1-1/p}.
\end{align}
In \eqref{eq:block-fiber-Lp}, the right-hand side is interpreted as $+\infty$ when $Q(y) = 0$.
\end{lemma}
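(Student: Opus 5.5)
The plan is to reduce both estimates to Proposition~\ref{prop:affine-slicing} applied to the affine function $z\mapsto R(y)+a(y)\cdot z$ on $[0,1]^b$ with $y$ fixed. The only extra input is a lower bound on $\norm{a(y)}_2$ in terms of $\abs{Q(y)}$. By Cauchy--Schwarz, since $w\in\{-1,1\}^b$ has $\norm w_2=\sqrt b$,
$$
\abs{Q(y)}=\abs{a(y)\cdot w}\le\norm{a(y)}_2\norm w_2=\sqrt b\,\norm{a(y)}_2,
$$
so $\norm{a(y)}_2\ge\abs{Q(y)}/\sqrt b$. Everything else is substitution into the slicing bounds.

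For \eqref{eq:block-fiber-smallball}, first dispose of the degenerate case. If $Q(y)=0$ the right-hand side is $\min\{1,+\infty\}=1$ and there is nothing to prove. If $Q(y)\ne0$, then $a(y)\ne0$, so \eqref{eq:affine-smallball} applies with $a=a(y)$, $b$-shift $R(y)$ and $Z\sim\mu_b$. It gives
$$
\Prob_Z\{\abs{P(y,Z)-u}\le\rho\}\le\min\left\{1,\frac{2\sqrt2\rho}{\norm{a(y)}_2}\right\}.
$$
The Cauchy--Schwarz bound then yields $2\sqrt2\rho/\norm{a(y)}_2\le 2\sqrt{2b}\rho/\abs{Q(y)}$. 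Since $\min\{1,\cdot\}$ is monotone, this is \eqref{eq:block-fiber-smallball}.

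For \eqref{eq:block-fiber-Lp}, assume $a(y)\ne0$. If $Q(y)=0$ the right-hand side is $+\infty$ by convention, so assume $Q(y)\ne0$. Then \eqref{eq:affine-Lp} gives $\norm{h_y}_{L^p}\le(\sqrt2/\norm{a(y)}_2)^{1-1/p}$. The exponent $1-1/p$ is positive, so $t\mapsto t^{1-1/p}$ is increasing. Combining this with $\sqrt2/\norm{a(y)}_2\le\sqrt{2b}/\abs{Q(y)}$ gives \eqref{eq:block-fiber-Lp}.

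There is no real obstacle here; the only point needing care is the handling of $Q(y)=0$ and $a(y)=0$ through the stated conventions. In applications $y$ is random, but the statement is pointwise in $y$, so no measurability issue arises at this stage; that is deferred to Lemma~\ref{lem:disintegration}.
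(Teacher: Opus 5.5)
Your proposal is correct and takes the same route as the paper. Cauchy--Schwarz with $\norm w_2=\sqrt b$ gives $\norm{a(y)}_2\ge\abs{Q(y)}/\sqrt b$, which you substitute into \eqref{eq:affine-smallball} and \eqref{eq:affine-Lp}, and you handle the case $Q(y)=0$ through the stated conventions.
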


\begin{proof}
Since $w\in\{-1,1\}^b$, one has $\norm w_2 = \sqrt b$. Cauchy-Schwarz therefore gives
$$
\abs{Q(y)} = \abs{a(y)\cdot w}\le\sqrt b\norm{a(y)}_2.
$$
If $Q(y)\ne0$, then $a(y)\ne0$ and hence $\norm{a(y)}_2\ge\abs{Q(y)}/\sqrt b$. Applying \eqref{eq:affine-smallball} to the affine map $z\mapsto R(y)+a(y)\cdot z$ gives
$$
\Prob_Z\{\abs{P(y,Z)-u}\le\rho\}\le\min\left\{1,\frac{2\sqrt2\rho}{\norm{a(y)}_2}\right\}\le\min\left\{1,\frac{2\sqrt{2b}\rho}{\abs{Q(y)}}\right\}.
$$
When $Q(y) = 0$, the right-hand side is interpreted as one, so the same estimate remains valid. If $a(y)\ne0$, \eqref{eq:affine-Lp} and the same lower bound for $\norm{a(y)}_2$ give \eqref{eq:block-fiber-Lp}.
\end{proof}

\subsection{The weighted recursive scale}

For a finite partition $\cC$ with $q = \#\cC$, and for $0\le j\le q-1$, put
\begin{align} \label{eq:Wj}
W_j(\cC) = \sum_{{\cI\subseteq\cC, \abs{\cI} = j}}\left(\prod_{B\in\cI}\sqrt{\abs B}\right)\sqrt{N(\cC\setminus\cI)}.
\end{align}
Thus $W_j(\cC) = w_{j+1}(\cC)$ and $W_0(\cC) = \sqrt{N(\cC)}$.

\begin{lemma}[Weighted block identities] \label{lem:weighted-identities}
Let $\cC$ be a partition into $q\ge1$ nonempty blocks.
\begin{enumerate}
\item For $0\le j\le q-1$, one has $W_j(\cC)\ge1$.
\item For $1\le j\le q-1$,
\begin{align} \label{eq:W-monotone}
W_{j-1}(\cC)\le jW_j(\cC).
\end{align}
\item For $1\le j\le q-1$,
\begin{align} \label{eq:W-deletion}
\sum_{B\in\cC}\sqrt{\abs B} W_{j-1}(\cC\setminus\{B\}) = jW_j(\cC).
\end{align}
\item For $0\le j\le q-1$,
\begin{align} \label{eq:W-coarse}
W_j(\cC)\le\sqrt{N(\cC)} \sigma(\cC)^j.
\end{align}
\end{enumerate}
\end{lemma}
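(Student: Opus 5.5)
We prove the four items of Lemma~\ref{lem:weighted-identities} separately; each is elementary and uses only the definition \eqref{eq:Wj}, with $s_B=\sqrt{\abs B}\ge1$.

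For (1), fix $0\le j\le q-1$ and choose any $j$-block family $\cI\subseteq\cC$. Each product $\prod_{B\in\cI}s_B$ is at least one. Since $j\le q-1$, the complement $\cC\setminus\cI$ is a nonempty family of nonempty blocks, so $N(\cC\setminus\cI)\ge1$. Hence every term of the sum is at least one, and there is at least one term.

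Item (3) is a double-counting identity. Expand the left-hand side of \eqref{eq:W-deletion} as a sum over pairs $(B,\cJ)$, where $B\in\cC$ and $\cJ\subseteq\cC\setminus\{B\}$ with $\abs{\cJ}=j-1$. Each such pair contributes
$$
s_B\prod_{C\in\cJ}s_C\sqrt{N((\cC\setminus\{B\})\setminus\cJ)}.
$$
Put $\cI=\cJ\cup\{B\}$. Then the contribution equals $\prod_{C\in\cI}s_C\sqrt{N(\cC\setminus\cI)}$. Each $j$-family $\cI$ arises from exactly $j$ pairs, one for each choice of $B\in\cI$. Summing over pairs therefore gives $jW_j(\cC)$.

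Item (2) follows from (3). For every $B$ with $q\ge2$, we claim $W_{j-1}(\cC\setminus\{B\})\ge$ the part of $W_{j-1}(\cC)$ corresponding to families avoiding $B$, up to the $\sqrt N$ factors. This direct route is messy, so I would instead prove (2) termwise. Map each $(j-1)$-family $\cJ$ to the $j$-families $\cJ\cup\{C\}$ with $C\notin\cJ$. The term of $\cJ\cup\{C\}$ is
$$
\prod_{\cJ}s\cdot s_C\sqrt{N(\cC\setminus\cJ)-\abs C}.
$$
We want
$$
\sqrt{N(\cC\setminus\cJ)}\le\sum_{C\notin\cJ,\ \cJ\cup\{C\}\ne\cC}s_C\sqrt{N(\cC\setminus\cJ)-\abs C}+\text{(slack)}.
$$
Summing over $\cJ$ gives multiplicity $j$, since each $j$-family arises from $j$ choices of $\cJ$; hence $\sum_{\cJ}(\cdots)=jW_j$. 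So it suffices to check, for the remaining blocks $\cR=\cC\setminus\cJ$ (with $\#\cR=q-j+1\ge2$), that
$$
\sqrt{N(\cR)}\le\sum_{C\in\cR}s_C\sqrt{N(\cR)-\abs C}.
$$
Write $N=N(\cR)$ and pick $C_0\in\cR$ with $\abs{C_0}$ maximal, so $\abs{C_0}\ge N/\#\cR$. If $N-\abs{C_0}\ge$ something comparable to $N/\abs{C_0}$, the $C_0$ term suffices. The cleaner route is to take any two distinct blocks $C_1,C_2$ and use the two terms
$$
s_{C_1}\sqrt{N-\abs{C_1}}+s_{C_2}\sqrt{N-\abs{C_2}}\ge\sqrt{\abs{C_1}\abs{C_2}}+\sqrt{\abs{C_2}\abs{C_1}},
$$
which only gives about $\sqrt{ab}$. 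Instead, use that for $a=\abs C$ with $1\le a\le N-1$, $a(N-a)\ge N-1$, and the sum over $C$ of $a_C(N-a_C)$ is at least $N^2-\max a\cdot N$. The concrete inequality to verify is $\sum_C\sqrt{a_C(N-a_C)}\ge\sqrt N$. If some $a_C\ge N/2$, that term alone gives $\sqrt{a_C(N-a_C)}\ge\sqrt{(N/2)\cdot1}$. This is not enough, so add the other terms: each other $C'$ has $N-a_{C'}\ge N/2$, giving $\sqrt{a_{C'}N/2}$. Together these give at least $\sqrt{N/2}(\sqrt{a_C/N'}+\dots)$. I expect this elementary inequality $\sum_C\sqrt{a_C(N-a_C)}\ge\sqrt N$ for at least two positive integers summing to $N$ to be the main obstacle. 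I would prove it by concavity: $\sqrt{a(N-a)}\ge\sqrt{N-1}\cdot\min$-type bounds, or more simply use $\sqrt{a(N-a)}\ge a\sqrt{(N-a)/a}$ and note that for $a\le N-1$ one has $a(N-a)\ge a\cdot\frac{N-a}{1}$. Summing the $\ge N-1$ bounds over at least two blocks gives $2\sqrt{N-1}\ge\sqrt N$ for $N\ge2$, which settles it.

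Item (4) is immediate. Bound $N(\cC\setminus\cI)\le N(\cC)$, and note that the sum of the $j$-fold products is the elementary symmetric polynomial $e_j(s)\le\sigma(\cC)^j$, as in Corollary~\ref{cor:smallball-consequences}.
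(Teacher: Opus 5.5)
Your proof is correct and is essentially the paper's. Items (1), (3) and (4) are argued the same way. For (2) you make the same termwise reduction, via the multiplicity-$j$ double count, to $\sqrt{N(\mathcal R)}\le\sum_{C\in\mathcal R}\sqrt{\abs C}\,\sqrt{N(\mathcal R)-\abs C}$ for $\mathcal R=\cC\setminus\cJ$ with $\#\mathcal R\ge2$. The only difference is how you check this scalar inequality: you use $a(N-a)\ge N-1$ together with $2\sqrt{N-1}\ge\sqrt N$, whereas the paper uses the one-line $\sqrt{N(\mathcal R)}\le\sum_{C\in\mathcal R}\sqrt{\abs C}$ combined with $N(\mathcal R)-\abs C\ge1$.

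In a final write-up, delete the abandoned attempts in (2): the claim that it follows from (3), and vacuous steps such as $a(N-a)\ge a\cdot\frac{N-a}{1}$. Also justify $a(N-a)\ge N-1$, which you only assert, by the identity $a(N-a)-(N-1)=(a-1)(N-1-a)\ge0$ for $1\le a\le N-1$.
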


\begin{proof}
Write $x_B = \sqrt{\abs B}$ for $B\in\cC$. If $\abs{\cI} = j\le q-1$, then $\cC\setminus\cI$ is nonempty. Since every block has at least one element,
$$
\left(\prod_{B\in\cI}x_B\right)\sqrt{N(\cC\setminus\cI)}\ge1.
$$
At least one such family $\cI$ exists, proving the first assertion.

Fix $1\le j\le q-1$. For $\cI\subseteq\cC$ with $\abs{\cI} = j-1$, put $R_{\cI} = \sqrt{N(\cC\setminus\cI)}$. The complement $\cC\setminus\cI$ contains at least two blocks. Hence, for every $C\in\cC\setminus\cI$, the partition $\cC\setminus(\cI\cup\{C\})$ is nonempty and
$$
\sqrt{N(\cC\setminus(\cI\cup\{C\}))}\ge1.
$$
The inequality between the Euclidean and $\ell^1$ norms gives
$$
R_{\cI}\le\sum_{C\in\cC\setminus\cI}x_C\le\sum_{C\in\cC\setminus\cI}x_C\sqrt{N(\cC\setminus(\cI\cup\{C\}))}.
$$
Multiply by $\prod_{B\in\cI}x_B$ and sum over all $\cI$ of size $j-1$. Every family $\cJ\subseteq\cC$ of size $j$ occurs exactly $j$ times, once for each choice of $C\in\cJ$ with $\cI = \cJ\setminus\{C\}$. This proves \eqref{eq:W-monotone}.

For \eqref{eq:W-deletion}, expand the left-hand side. A term is indexed by a pair $(B,\cI)$ with $B\in\cC$ and $\cI\subseteq\cC\setminus\{B\}$ of size $j-1$. Put $\cJ = \cI\cup\{B\}$. The corresponding summand equals
$$
\left(\prod_{C\in\cJ}\sqrt{\abs C}\right)\sqrt{N(\cC\setminus\cJ)}.
$$
Every $\cJ$ of size $j$ has exactly $j$ representations of this form, proving the identity.

Finally, for every $\cI$ one has $N(\cC\setminus\cI)\le N(\cC)$. Therefore
$$
W_j(\cC)\le\sqrt{N(\cC)}\sum_{{\cI\subseteq\cC, \abs{\cI} = j}}\prod_{B\in\cI}x_B.
$$
The last sum is the $j$th elementary symmetric polynomial in the nonnegative numbers $(x_B)_{B\in\cC}$, and it is at most $(\sum_Bx_B)^j/j!\le\sigma(\cC)^j$. This proves \eqref{eq:W-coarse}.
\end{proof}

We now define the weighted recursive scale used in the induction. For a finite partition $\cC$, set
\begin{align}
 a_1(\cC)& = \max\{1,4\sqrt{2N(\cC)}\}, \label{eq:recursive-a1} \\
 a_k(\cC)& = \max\left\{ a_{k-1}(\cC),1+2\sqrt2\sum_{B\in\cC}\sqrt{\abs B} a_{k-1}(\cC\setminus\{B\})\right\},\quad k\ge2. \label{eq:recursive-ak}
\end{align}
The sum over the empty partition is zero, so $ a_k(\varnothing) = 1$. The first term makes the scale nondecreasing in the degree parameter and covers contractions whose degree drops by more than one. The weighted sum records the scale of each possible residual partition separately.

\begin{lemma}[Monotonicity and explicit partition scale] \label{lem:recursive-scale}
The scales in \eqref{eq:recursive-a1}-\eqref{eq:recursive-ak} have the following properties.
\begin{enumerate}
\item $ a_k(\cC)$ is nondecreasing in $k$.
\item If $\cA\preccurlyeq\cC$, then
\begin{align} \label{eq:a-restriction}
 a_k(\cA)\le a_k(\cC),\quad k\ge1.
\end{align}
\item If $\#\cC\ge k$, then
\begin{align} \label{eq:a-weighted}
 a_k(\cC)\le C_kW_{k-1}(\cC) = C_k w_k(\cC).
\end{align}
Consequently, if $N(\cC) = n\ge1$ and $\#\cC\ge k$, then
\begin{align} \label{eq:a-coarse}
 a_k(\cC)\le C_k\sqrt n\sigma(\cC)^{k-1}.
\end{align}
\end{enumerate}
\end{lemma}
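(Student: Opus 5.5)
Property (1) is immediate from \eqref{eq:recursive-ak}: for $k\ge2$, $a_k(\cC)$ is defined as a maximum one of whose entries is $a_{k-1}(\cC)$. Property (2) will be proved by induction on $k$, for all pairs $\cA\preccurlyeq\cC$ simultaneously.

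\emph{Property (2).} For $k=1$, an injection $\iota$ with $A\subseteq\iota(A)$ gives $N(\cA)\le N(\cC)$, and $a_1$ is monotone in $N$. For $k\ge2$, assume the claim at level $k-1$. Fix $B\in\cA$. Then $\cA\setminus\{B\}\preccurlyeq\cC\setminus\{\iota(B)\}$, and $\sqrt{\abs B}\le\sqrt{\abs{\iota(B)}}$. Hence each summand $\sqrt{\abs B}\,a_{k-1}(\cA\setminus\{B\})$ is bounded by the summand of $\cC$ indexed by $\iota(B)$. Because $\iota$ is injective and the remaining summands for $\cC$ are nonnegative, the weighted sum for $\cA$ is at most that for $\cC$. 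The other entry of the maximum is handled by $a_{k-1}(\cA)\le a_{k-1}(\cC)$.

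\emph{Property (3).} I will prove the stronger statement that $a_k(\cC)\le C_kW_{k-1}(\cC)$ whenever $\#\cC\ge k$, by induction on $k$.
\begin{itemize}
\item For $k=1$: $a_1(\cC)\le 4\sqrt2\sqrt{N(\cC)}=4\sqrt2\,W_0(\cC)$, using $N(\cC)\ge1$.
\item For $k\ge2$ with $\#\cC\ge k$: the first entry satisfies $a_{k-1}(\cC)\le C_{k-1}W_{k-2}(\cC)\le C_{k-1}(k-1)W_{k-1}(\cC)$, by the induction hypothesis and \eqref{eq:W-monotone}. For the second entry, each $\cC\setminus\{B\}$ has at least $k-1$ blocks, so the induction hypothesis gives $a_{k-1}(\cC\setminus\{B\})\le C_{k-1}W_{k-2}(\cC\setminus\{B\})$. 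The deletion identity \eqref{eq:W-deletion} with $j=k-1$ then yields
\[
\sum_{B}\sqrt{\abs B}\,a_{k-1}(\cC\setminus\{B\})\le C_{k-1}(k-1)W_{k-1}(\cC).
\]
The additive $1$ is absorbed using $W_{k-1}(\cC)\ge1$. Choosing $C_k=1+2\sqrt2\,C_{k-1}(k-1)+C_{k-1}(k-1)$ closes the induction.
\end{itemize}
The coarse bound \eqref{eq:a-coarse} then follows from \eqref{eq:W-coarse}.

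\emph{Main obstacle.} The delicate point is the hypothesis $\#\cC\ge k$. The induction must stay within partitions having enough blocks, and the deletion step is exactly where this is checked: removing one block leaves $\#\cC-1\ge k-1$ blocks. Apart from this, the argument only uses the identities of Lemma~\ref{lem:weighted-identities}.
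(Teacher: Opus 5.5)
Your proposal is correct and follows essentially the same route as the paper. For (2) you use induction on $k$ with the restricted injection witnessing $\cA\setminus\{A\}\preccurlyeq\cC\setminus\{\iota(A)\}$. For (3) you use induction on $k$ via \eqref{eq:W-monotone}, the deletion identity \eqref{eq:W-deletion}, and $W_{k-1}(\cC)\ge1$.

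The only differences are cosmetic. Your constant $C_k=1+2\sqrt2(k-1)C_{k-1}+(k-1)C_{k-1}$ is a bit larger than the paper's $C_k=1+2\sqrt2(k-1)C_{k-1}$. The paper's constant already suffices, because the first entry of the maximum is dominated by the bound on the second. Also, what you call the "stronger statement" in (3) is just the statement itself.
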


\begin{proof}
The first assertion follows from the first term in the maximum in \eqref{eq:recursive-ak}.

We prove the second assertion by induction on $k$. Let $\iota:\cA\to\cC$ witness $\cA\preccurlyeq\cC$. Since the blocks of each partition are disjoint and $A\subseteq\iota(A)$,
$$
N(\cA)\le N(\cC).
$$
Thus \eqref{eq:a-restriction} holds for $k = 1$. Assume it holds for $k-1$. For every $A\in\cA$, the restriction of $\iota$ to $\cA\setminus\{A\}$ witnesses
$$
\cA\setminus\{A\}\preccurlyeq\cC\setminus\{\iota(A)\}.
$$
The induction hypothesis and $\abs A\le\abs{\iota(A)}$ give
$$
\sqrt{\abs A} a_{k-1}(\cA\setminus\{A\})\le\sqrt{\abs{\iota(A)}} a_{k-1}(\cC\setminus\{\iota(A)\}).
$$
Summing over $A\in\cA$ and using injectivity of $\iota$ bounds the weighted sum for $\cA$ by a sub-sum of the weighted sum for $\cC$. The first term in \eqref{eq:recursive-ak} is controlled by the induction hypothesis. Hence \eqref{eq:a-restriction} holds at level $k$.

We prove \eqref{eq:a-weighted} by induction on $k$, simultaneously for all partitions with at least $k$ blocks. For $k = 1$, $W_0(\cC) = \sqrt{N(\cC)}\ge1$, and therefore
$$
 a_1(\cC)\le4\sqrt2W_0(\cC).
$$
Assume the assertion at level $k-1$, and let $\#\cC\ge k$. Every partition $\cC\setminus\{B\}$ has at least $k-1$ blocks, so the induction hypothesis and \eqref{eq:W-deletion} give
$$
\sum_{B\in\cC}\sqrt{\abs B} a_{k-1}(\cC\setminus\{B\})\le C_{k-1}\sum_{B\in\cC}\sqrt{\abs B} W_{k-2}(\cC\setminus\{B\}) = C_{k-1}(k-1)W_{k-1}(\cC).
$$
The induction hypothesis and \eqref{eq:W-monotone} also give
$$
 a_{k-1}(\cC)\le C_{k-1}W_{k-2}(\cC)\le C_{k-1}(k-1)W_{k-1}(\cC).
$$
By Lemma~\ref{lem:weighted-identities}(1), $W_{k-1}(\cC)\ge1$. Hence \eqref{eq:recursive-ak} yields
$$
 a_k(\cC)\le\left(1+2\sqrt2(k-1)C_{k-1}\right)W_{k-1}(\cC).
$$
Starting with $C_1 = 4\sqrt2$ and defining $C_k = 1+2\sqrt2(k-1)C_{k-1}$ proves \eqref{eq:a-weighted}. The coarse estimate \eqref{eq:a-coarse} follows from \eqref{eq:W-coarse} with $j = k-1$.
\end{proof}

\section{Proof of the small-ball theorem} \label{sec:smallball}

We now prove the zero-centered estimate. After normalization, a point of a nonempty sublevel set and a vertex extremizer determine one fixed block and one fixed sign contraction. The weights in Lemma~\ref{lem:block-slope} are chosen from the residual recursive scales. Affine slicing bounds every selected-block fiber in terms of the same scalar contraction. If the contraction is nonconstant, inactive variables are deleted and the induction hypothesis is applied with the inherited partition. The residual scale appearing in the induction hypothesis then cancels with the same scale in the weighted slope lower bound. The exact recursion \eqref{eq:Phi-recursion} performs the final integration. A constant contraction is handled directly.

\begin{proposition}[Zero-centered profile] \label{prop:zero-centered}
Let $P:[0,1]^n\to\R$ be a nonzero multi-affine polynomial of exact degree $r\ge1$, assume that all $n$ variables are active, and let $\cB\in\Adm(P)$. Then
\begin{align} \label{eq:zero-centered}
\mu_n\{\abs P\le\rho\}\le\Phi_r\left( a_r(\cB)\frac{\rho}{\norm P_\infty}\right),\quad \rho\ge0.
\end{align}
\end{proposition}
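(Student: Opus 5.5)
We argue by induction on the degree $r$, simultaneously for all multi-affine polynomials, all coordinate sets and all admissible partitions. The base case $r=1$ is Proposition~\ref{prop:affine-base}, since $4\sqrt{2n}\le a_1(\cB)$ and $N(\cB)=n$. For $r\ge2$ we normalize $\norm P_\infty=1$ and assume $\rho< 1/a_r(\cB)$, since otherwise the bound is trivial. We also assume the sublevel set $\{\abs P\le\rho\}$ is nonempty and pick a point $x$ in it. By Lemma~\ref{lem:vertex} there is a vertex $z$ with $\abs{P(z)}=1$, so $\abs{P(z)-P(x)}\ge 1-\rho$.

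We apply Lemma~\ref{lem:block-slope} with weights $c_B=\sqrt{\abs B}\,a_{r-1}(\cB\setminus\{B\})$. Set $S=\sum_C c_C$; then $1+2\sqrt2 S\le a_r(\cB)$ by \eqref{eq:recursive-ak}. The lemma gives a block $B$ and signs $w$ with
$$\norm{Q_{B,w}}_\infty\ge \frac{c_B}{S}(1-\rho).$$
These $B$ and $w$ are now fixed independently of everything else.

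We condition on $y=x_{B^c}$ and use Lemma~\ref{lem:block-fiber} with $b=\abs B$. This gives
$$\mu_n\{\abs P\le\rho\}\le \int \min\Bigl\{1,\frac{2\sqrt{2\abs B}\rho}{\abs{Q_{B,w}(y)}}\Bigr\}\,d\mu_{B^c}(y),$$
and by the layer-cake Lemma~\ref{lem:layer-cake} this equals $\int_0^1\mu\{\abs{Q_{B,w}}<2\sqrt{2\abs B}\rho/t\}\,dt$.

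There are two cases for the contraction.

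\emph{$Q_{B,w}$ is a nonzero constant.} Its absolute value equals $\norm{Q_{B,w}}_\infty$, so the integral is at most $2\sqrt{2\abs B}\rho S/(c_B(1-\rho))$. Using $a_{r-1}\ge1$ and absorbing the factor $(1-\rho)^{-1}$, this is $\lesssim a_r(\cB)\rho$, and then $\Phi_1\le\Phi_r$. The factor $1-\rho$ has to be handled carefully; we would use the margin between $2\sqrt2 S$ and $a_r(\cB)$, or an exact choice of $\rho$ scale.

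\emph{$Q_{B,w}$ is nonconstant.} Let its exact degree be $r'\le r-1$. We delete inactive variables and apply the induction hypothesis with the inherited partition $\cB_{B,w}$ of Lemma~\ref{lem:inherited}. Monotonicity of $a$ in $k$ and \eqref{eq:a-restriction} bound the resulting scale by $a_{r-1}(\cB\setminus\{B\})$, and $\Phi_{r'}\le\Phi_{r-1}$. The integrand is therefore at most
$$\Phi_{r-1}\Bigl(a_{r-1}(\cB\setminus\{B\})\,\frac{2\sqrt{2\abs B}\rho}{t\,\norm{Q_{B,w}}_\infty}\Bigr).$$
Inserting the slope lower bound, the factor $a_{r-1}(\cB\setminus\{B\})\sqrt{\abs B}=c_B$ cancels, leaving $\Phi_{r-1}(s/t)$ with $s=2\sqrt2 S\rho/(1-\rho)$. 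Then \eqref{eq:Phi-recursion} integrates this to $\Phi_r(s)$.

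The main obstacle is the loss factor $(1-\rho)^{-1}$: we need $2\sqrt2 S\rho/(1-\rho)\le a_r(\cB)\rho$ whenever $a_r(\cB)\rho<1$. Writing $A=2\sqrt2S$ and $a_r\ge1+A$, we have $\rho<1/(1+A)$, so $1-\rho>A/(1+A)$. Hence $A/(1-\rho)<1+A\le a_r(\cB)$, which closes the argument exactly. This explains the "$1+$" in the definition \eqref{eq:recursive-ak}. The same computation handles the constant case. Finally, $\Phi_r$ is increasing, and one should check measurability and the case $Q_{B,w}(y)=0$ (a null set by Lemma~\ref{lem:zero-set}).
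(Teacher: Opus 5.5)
Your proposal is correct and follows the paper's proof essentially step for step. It uses the same weights $c_C=\sqrt{\abs C}\,a_{r-1}(\cB\setminus\{C\})$ in Lemma~\ref{lem:block-slope}, fiber integration via Lemma~\ref{lem:block-fiber} and the layer-cake formula, and the same split into constant and nonconstant contractions with the inherited partition, followed by cancellation of the child scale and the recursion \eqref{eq:Phi-recursion}. Your handling of the $(1-\rho)^{-1}$ loss through the ``$1+$'' margin in \eqref{eq:recursive-ak} is exactly the paper's inequality $(\lambda-1)\rho/(1-\rho)\le\lambda\rho$ under $\lambda\rho<1$, and keeping the strict sublevel set from the layer-cake identity is harmless, since it only decreases the measure.
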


\begin{proof}
We induct on $r$ and normalize $\norm P_\infty = 1$. The case $\rho = 0$ follows from Lemma~\ref{lem:zero-set}, and the case $r = 1$ is Proposition~\ref{prop:affine-base}.

Assume $r\ge2$ and the claim in every smaller positive degree. Since $P$ has exact degree $r$ and $\cB$ is admissible, $\#\cB\ge r$.

\emph{The nontrivial range.} Put $\lambda = a_r(\cB)$. If $\lambda\rho\ge1$, the right-hand side of \eqref{eq:zero-centered} is one. We may therefore assume
\begin{align} \label{eq:nontrivial}
\lambda\rho<1.
\end{align}
If the sublevel set is empty, there is nothing to prove.

\emph{Weighted selection of a fixed contraction.} Choose one point $x$ with $\abs{P(x)}\le\rho$. By Lemma~\ref{lem:vertex}, there is a vertex $z$ with $\abs{P(z)} = 1$, and therefore
\begin{align} \label{eq:oscillation-gap-smallball}
\abs{P(z)-P(x)}\ge1-\rho.
\end{align}
For $C\in\cB$, define the positive weight
$$
c_C = \sqrt{\abs C} a_{r-1}(\cB\setminus\{C\}),
$$
and put
\begin{align} \label{eq:S-recursive}
S_{r-1}(\cB) = \sum_{C\in\cB}\sqrt{\abs C} a_{r-1}(\cB\setminus\{C\}).
\end{align}
Lemma~\ref{lem:block-slope}, applied with these weights, gives a block $B\in\cB$ and $w\in\{-1,1\}^B$ such that, with $b = \abs B$ and $Q = Q_{B,w}$,
\begin{align} \label{eq:Q-lower-smallball}
\norm Q_\infty\ge\frac{\sqrt b a_{r-1}(\cB\setminus\{B\})}{S_{r-1}(\cB)}(1-\rho)>0.
\end{align}
The point $x$ is used only to select $B$ and $w$. From now on, $B,w,Q$ remain fixed on every complementary fiber.

\emph{Fiber integration.} Let $J = [n]\setminus B$. From \eqref{eq:block-representation},
$$
P(y,z) = R_B(y)+a_B(y)\cdot z,\quad y\in[0,1]^J,\quad z\in[0,1]^B,
$$
and $Q(y) = a_B(y)\cdot w$. Fubini and Lemma~\ref{lem:block-fiber} give
\begin{align} \label{eq:fiber-integral}
\mu_n\{\abs P\le\rho\}\le\int_{[0,1]^J}\min\left\{1,\frac{2\sqrt{2b}\rho}{\abs{Q(y)}}\right\}d\mu_J(y).
\end{align}
By Lemma~\ref{lem:layer-cake}, the right-hand side is at most
\begin{align} \label{eq:layer-Q}
\int_0^1\mu_J\left\{\abs Q\le\frac{2\sqrt{2b}\rho}{t}\right\}dt.
\end{align}
The layer-cake identity first produces a strict inequality inside the sublevel event. If $Q$ is nonconstant, Lemma~\ref{lem:zero-set} shows that the strict and non-strict events have the same $\mu_J$-measure. If $Q$ is constant, then $Q\ne0$ by \eqref{eq:Q-lower-smallball}, and the two events can differ for at most one value of $t$. Thus the replacement does not change the $dt$-integral in \eqref{eq:layer-Q}.

\emph{The constant-contraction case.} Suppose first that $Q$ is a nonzero constant. Then \eqref{eq:fiber-integral} is at most
$$
\Phi_1\left(\frac{2\sqrt{2b}\rho}{\norm Q_\infty}\right).
$$
Since $ a_{r-1}(\cB\setminus\{B\})\ge1$, \eqref{eq:Q-lower-smallball} gives
$$
\frac{2\sqrt{2b}\rho}{\norm Q_\infty}\le\frac{2\sqrt2S_{r-1}(\cB)\rho}{1-\rho}.
$$
By \eqref{eq:recursive-ak},
$$
\lambda-1\ge2\sqrt2S_{r-1}(\cB).
$$
Therefore
$$
\frac{2\sqrt{2b}\rho}{\norm Q_\infty}\le\frac{(\lambda-1)\rho}{1-\rho}\le\lambda\rho,
$$
where the last inequality is equivalent to \eqref{eq:nontrivial}. Since $\Phi_1\le\Phi_r$, the result follows in this case.

\emph{The nonconstant-contraction case.} Now suppose that $Q$ is nonconstant. Let $s = \deg Q\in\{1,\ldots,r-1\}$, let $A = A_{B,w}\subseteq J$ be its active-variable set, and let $\cC = \cB_{B,w}$ be the inherited admissible partition of $A$. Since $Q$ depends only on $y_A$ and $\mu_J = \mu_A\otimes\mu_{J\setminus A}$,
$$
\mu_J\{\abs Q\le u\} = \mu_A\{\abs Q\le u\},\quad u\ge0.
$$
Thus integrating the inactive coordinates contributes the factor one. We replace the ambient cube by $[0,1]^A$, the product measure by $\mu_A$, and the remaining blocks by $\cC$. The supremum norm of $Q$ is unchanged. Since $Q$ has exact degree $s$ and $\cC\in\Adm(Q)$, we have $\#\cC\ge s$, so the induction hypothesis applies to $Q$ on $[0,1]^A$. Lemma~\ref{lem:inherited} and Lemma~\ref{lem:recursive-scale} give
\begin{align} \label{eq:Q-scale-monotone}
 a_s(\cC)\le a_{r-1}(\cC)\le a_{r-1}(\cB\setminus\{B\}).
\end{align}
The induction hypothesis and $\Phi_s\le\Phi_{r-1}$ therefore imply, for $t\in(0,1)$,
$$
\mu_A\left\{\abs Q\le\frac{2\sqrt{2b}\rho}{t}\right\}\le\Phi_{r-1}\left( a_{r-1}(\cB\setminus\{B\})\frac{2\sqrt{2b}\rho}{t\norm Q_\infty}\right).
$$
Insert this estimate into \eqref{eq:layer-Q} and use \eqref{eq:Phi-recursion}:
\begin{align} \label{eq:profile-after-integration}
\mu_n\{\abs P\le\rho\}\le\Phi_r\left( a_{r-1}(\cB\setminus\{B\})\frac{2\sqrt{2b}\rho}{\norm Q_\infty}\right).
\end{align}
The weighted lower bound \eqref{eq:Q-lower-smallball} cancels the child scale and gives
$$
 a_{r-1}(\cB\setminus\{B\})\frac{2\sqrt{2b}\rho}{\norm Q_\infty}\le\frac{2\sqrt2S_{r-1}(\cB)\rho}{1-\rho}\le\frac{(\lambda-1)\rho}{1-\rho}\le\lambda\rho.
$$
This closes the induction.
\end{proof}

\begin{corollary}[All-center recursive estimate] \label{cor:recursive-allcenter}
Let $P:[0,1]^n\to\R$ be a nonconstant multi-affine polynomial of exact degree $r\ge1$, assume that all variables are active, and let $\cB\in\Adm(P)$. Then
\begin{align} \label{eq:recursive-allcenter}
\cQ_P(\rho)\le\Phi_r\left(2 a_r(\cB)\frac{\rho}{\osc(P)}\right),\quad \rho\ge0.
\end{align}
\end{corollary}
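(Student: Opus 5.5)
The plan is to reduce the all-center estimate to the zero-centered Proposition~\ref{prop:zero-centered}, applied to the shifted polynomial $P-u$, and then compare $\norm{P-u}_\infty$ with $\osc(P)$. Fix $u\in\R$ and $\rho\ge0$. Since $P$ is nonconstant, $P-u$ is a nonzero multi-affine polynomial. Shifting only changes the constant coefficient $a_\varnothing$, so three properties carry over. First, $P-u$ has the same exact degree $r$. (When $r\ge1$ the top-degree coefficients are untouched.) Second, $P-u$ has the same set of active variables, namely all of them. Third, the same partition $\cB$ is admissible for $P-u$, because admissibility only constrains supports $S$ with $a_S\ne0$ and $\abs{S\cap B}\le1$ holds trivially for $S=\varnothing$. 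Hence Proposition~\ref{prop:zero-centered} gives
$$
\Prob\{\abs{P(X)-u}\le\rho\}=\mu_n\{\abs{P-u}\le\rho\}\le\Phi_r\left(a_r(\cB)\frac{\rho}{\norm{P-u}_\infty}\right).
$$

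The key elementary step is the lower bound $\norm{P-u}_\infty\ge\osc(P)/2$. Write $m=\min P$ and $M=\max P$. Then
$$
\norm{P-u}_\infty=\max\{\abs{M-u},\abs{m-u}\}\ge\tfrac12(\abs{M-u}+\abs{u-m})\ge\tfrac12(M-m).
$$
Since $\Phi_r$ is nondecreasing on $[0,\infty)$ by Lemma~\ref{lem:Phi-calculus} (it is increasing on $[0,1]$ and identically one beyond), it follows that
$$
\Phi_r\left(a_r(\cB)\frac{\rho}{\norm{P-u}_\infty}\right)\le\Phi_r\left(2a_r(\cB)\frac{\rho}{\osc(P)}\right).
$$
Taking the supremum over $u$ gives \eqref{eq:recursive-allcenter}.

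No step here is a real obstacle. The only point needing care is checking that the hypotheses of Proposition~\ref{prop:zero-centered} really hold for $P-u$: nonzero, exact degree $r$, all variables active, and $\cB\in\Adm(P-u)$. Each of these is invariant under adding a constant. The edge case $\rho=0$ is also covered directly by Lemma~\ref{lem:zero-set}.
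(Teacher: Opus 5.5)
Your proposal is correct and matches the paper's proof essentially verbatim. You apply Proposition~\ref{prop:zero-centered} to $P-u$, whose exact degree, active variables and admissible partitions are unchanged, then use $\norm{P-u}_\infty\ge\osc(P)/2$ with the monotonicity of $\Phi_r$, and take the supremum over $u$.
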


\begin{proof}
Fix $u\in\R$. Subtracting a constant preserves the active variables, the exact degree, and every admissible partition. Proposition~\ref{prop:zero-centered} gives
$$
\mu_n\{\abs{P-u}\le\rho\}\le\Phi_r\left( a_r(\cB)\frac{\rho}{\norm{P-u}_\infty}\right).
$$
If the range of $P$ is $[m,M]$, then
$$
\norm{P-u}_\infty = \max\{\abs{M-u},\abs{m-u}\}\ge\frac{M-m}{2} = \frac{\osc(P)}2.
$$
Hence
$$
\mu_n\{\abs{P-u}\le\rho\}\le\Phi_r\left(2 a_r(\cB)\frac{\rho}{\osc(P)}\right).
$$
Taking the supremum over $u$ proves \eqref{eq:recursive-allcenter}.
\end{proof}

\begin{proof}[Proof of Theorem~\ref{thm:main-smallball}]
Corollary~\ref{cor:recursive-allcenter} and Lemma~\ref{lem:recursive-scale}(3) give
$$
\cQ_P(\rho)\le\Phi_d\left(C_dw_d(\cB)\frac{\rho}{\osc(P)}\right),
$$
which is \eqref{eq:main-weighted}.
\end{proof}

\section{Proof of the block-product theorem} \label{sec:models}

The monomial $M_d(x) = x_1\cdots x_d$ is the exact prototype: Lemma~\ref{lem:Phi-product} gives its distribution function $\Phi_d$ and its density \eqref{eq:uniform-product-density}. The purpose of this section is to show that the same profile persists uniformly for products of high-dimensional centered block averages and to identify the resulting dimensional obstruction.

\subsection{Multiplicative tensorization}

\begin{lemma}[Multiplicative tensorization] \label{lem:tensorization}
Let $Y_1,\ldots,Y_r$ be independent real random variables. Suppose that for $d_j\ge1$, $M_j>0$, and $a_j\ge1$,
\begin{align} \label{eq:tensor-assumption}
\Prob\{\abs{Y_j}\le tM_j\}\le\Phi_{d_j}(a_jt), \quad t\ge0.
\end{align}
Put $D = d_1+\cdots+d_r$ and $A = a_1\cdots a_r$. Then
\begin{align} \label{eq:tensor-conclusion}
\Prob\left\{\left|\prod_{j = 1}^rY_j\right| \le t\prod_{j = 1}^rM_j\right\} \le\Phi_D(At), \quad t\ge0.
\end{align}
\end{lemma}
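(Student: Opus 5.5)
The plan is to induct on $r$, reducing everything to the case $r=2$. The case $r=1$ is the hypothesis itself. For the inductive step I group $Y_1\cdots Y_{r-1}$ into a single variable, which by induction satisfies the assumption with degree $d_1+\cdots+d_{r-1}$ and constant $a_1\cdots a_{r-1}$. It is then combined with $Y_r$ by the two-variable case. The base case therefore carries all the content.

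For two variables, first normalize by replacing $Y_j$ with $a_jY_j/M_j$. The hypothesis becomes $\Prob\{\abs{Y_j}\le t\}\le\Phi_{d_j}(t)$, and the conclusion becomes $\Prob\{\abs{Y_1Y_2}\le t\}\le\Phi_{d_1+d_2}(t)$. This rescaling uses only $M_j>0$; the assumption $a_j\ge1$ is not even needed here. Let $G_j$ be the distribution function of $\abs{Y_j}$. We have $G_j\le\Phi_{d_j}=F_{U^{(j)}}$, where $U^{(j)}$ is a product of $d_j$ independent uniforms, by Lemma~\ref{lem:Phi-product}. As in the proof of Lemma~\ref{lem:inverse-moments}, generalized inverses give a coupling with $\abs{Y_j}\ge U^{(j)}$ almost surely, that is, $\abs{Y_j}$ stochastically dominates $U^{(j)}$.

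Now take independent copies of these couplings for $j=1,2$. This is legitimate because $Y_1$ and $Y_2$ are independent, so the joint law of $(\abs{Y_1},\abs{Y_2})$ is preserved. The coupling gives $\abs{Y_1Y_2}\ge U^{(1)}U^{(2)}$ almost surely. The right-hand side is a product of $d_1+d_2$ independent uniforms. Hence
$$
\Prob\{\abs{Y_1Y_2}\le t\}\le\Prob\{U^{(1)}U^{(2)}\le t\}=\Phi_{d_1+d_2}(t)
$$
by Lemma~\ref{lem:Phi-product}.

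In fact no induction is needed: coupling all $r$ variables at once gives the general statement directly. Alternatively, there is a Fubini route through the identity $\int_0^1\Phi_{d-1}(s/t)\,dt=\Phi_d(s)$ from Lemma~\ref{lem:Phi-calculus}, generalized to $\int\Phi_{d_1}(s/v)\,d\Phi_{d_2}(v)=\Phi_{d_1+d_2}(s)$. However, that route needs an extra monotonicity argument to replace the law of $\abs{Y_2}$ by $\Phi_{d_2}$.

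The main obstacle is making the coupling rigorous when $Y_j$ has atoms, possibly including an atom at $0$. When $G_j\le\Phi_{d_j}$ holds, the generalized inverse satisfies $G_j^{-1}(v)\ge\Phi_{d_j}^{-1}(v)$ for $v\in(0,1)$; this comparison holds for arbitrary distribution functions. An atom at zero is excluded automatically, since $\Prob\{\abs{Y_j}=0\}\le\Phi_{d_j}(0)=0$. Since $\Phi_{d_j}$ is continuous and strictly increasing on $[0,1]$ by Lemma~\ref{lem:Phi-calculus}, $\Phi_{d_j}^{-1}(V)$ has exactly the law of a product of $d_j$ uniforms. This is the step I would write out carefully; the rest is bookkeeping of the constants $A$ and $\prod_jM_j$.
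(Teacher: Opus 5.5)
Your proof is correct and is essentially the paper's argument. The paper also couples each factor with an independent uniform through generalized inverses and combines the independent dominating laws, but it works in logarithmic coordinates: it dominates $-\log(\abs{Y_j}/M_j)$ by $\log a_j+G_{d_j}$ with $G_{d_j}$ gamma, then sums the gammas, which is exactly your product-of-uniforms comparison after taking $-\log$. Your preliminary rescaling by $a_j/M_j$ is a harmless simplification, and you are right that $a_j\ge1$ is not needed. As you note yourself, the induction you sketch first is superfluous.
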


\begin{proof}
The assumption at $t = 0$ gives $\Prob\{Y_j = 0\} = 0$, so the logarithms below are well defined almost surely. Put
$$
V_j = -\log\frac{\abs{Y_j}}{M_j}.
$$
For every $u\in\R$,
$$
\Prob\{V_j\ge u\} = \Prob\{\abs{Y_j}\le e^{-u}M_j\}\le\Phi_{d_j}(a_je^{-u}).
$$
Let $G_{d_j}^{(j)}$ be independent gamma random variables with shape $d_j$ and rate one, and set $W_j = \log a_j+G_{d_j}^{(j)}$. By Lemma~\ref{lem:Phi-product},
$$
\Prob\{W_j\ge u\} = \Prob\{G_{d_j}^{(j)}\ge u-\log a_j\} = \Phi_{d_j}(a_je^{-u}),
$$
where both sides equal one when $u\le\log a_j$. Thus $\Prob\{V_j\ge u\}\le\Prob\{W_j\ge u\}$ for every $u$. Since $W_j$ has a continuous distribution,
$$
\Prob\{V_j>u\}\le\Prob\{V_j\ge u\}\le\Prob\{W_j\ge u\} = \Prob\{W_j>u\}.
$$
Hence $F_{V_j}(u)\ge F_{W_j}(u)$ for every $u$, where $F_Z(u) = \Prob\{Z\le u\}$, so $V_j$ is stochastically dominated by $W_j$.

For a distribution function $F$, write $F^{-1}(v) = \inf\{t\in\R:F(t)\ge v\}$ for $0<v<1$. For each $j$, let $U_j$ be an independent uniform random variable on $(0,1)$ and define $V_j' = F_{V_j}^{-1}(U_j)$ and $W_j' = F_{W_j}^{-1}(U_j)$. The stochastic domination implies $V_j'\le W_j'$ almost surely. Because the $U_j$ are independent, the pairs $(V_j',W_j')$ are independent across $j$, and the vectors $(V_1',\ldots,V_r')$ and $(W_1',\ldots,W_r')$ have the same laws as $(V_1,\ldots,V_r)$ and $(W_1,\ldots,W_r)$, respectively.

The sum of independent gamma variables with common rate one is gamma with the sum of the shapes. Hence
$$
\sum_{j = 1}^rW_j'\stackrel{\mathrm{law}}{ = }\log A+G_D,
$$
where $G_D$ has gamma distribution with shape $D = d_1+\cdots+d_r$ and rate one. For $t>0$,
$$
\left\{\left|\prod_{j = 1}^rY_j\right|\le t\prod_{j = 1}^rM_j\right\} = \left\{\sum_{j = 1}^rV_j\ge\log(1/t)\right\}.
$$
The coupling therefore gives
$$
\Prob\left\{\sum_{j = 1}^rV_j\ge\log(1/t)\right\}\le\Prob\{\log A+G_D\ge\log(1/t)\} = \Phi_D(At).
$$
The case $t = 0$ follows from $Y_j\ne0$ almost surely. This proves \eqref{eq:tensor-conclusion}.
\end{proof}

\subsection{Centered block averages}

For $m\ge1$, define
\begin{align} \label{eq:A-m}
A_m(x) = \frac2m\sum_{i = 1}^m x_i-1, \quad x\in[0,1]^m.
\end{align}
Its coefficient vector has Euclidean norm $2/\sqrt m$. If $X^{(m)}\sim\mu_m$, Proposition~\ref{prop:affine-slicing} gives
\begin{align} \label{eq:A-m-smallball}
\Prob\{\abs{A_m(X^{(m)})}\le t\}\le\Phi_1(\sqrt{2m}t),\quad t\ge0.
\end{align}

We also use the following elementary one-dimensional fact.

\begin{lemma}[Central lower bound] \label{lem:central-density}
There are universal constants $a,b>0$ such that every even log-concave probability density $f$ on $\R$ with variance $1/3$ satisfies
\begin{align} \label{eq:central-density}
f(x)\ge b, \quad \abs x\le a.
\end{align}
\end{lemma}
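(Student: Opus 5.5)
We need to prove: every even log-concave density $f$ on $\R$ with variance $1/3$ satisfies $f(x)\ge b$ for $\abs x\le a$, with universal constants $a,b>0$.

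The plan is to use three standard facts about an even log-concave density $f=e^{-V}$ with $V$ convex and even. First, $f$ is maximized at the origin and is nonincreasing on $[0,\infty)$. Second, $f(0)$ is comparable to the inverse standard deviation. Third, log-concavity gives a lower bound on $f(x)/f(0)$ for $x$ in a range proportional to the standard deviation.

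\emph{Step 1: $f(0)$ is bounded below.} Since $V$ is even and convex, $V(0)\le V(x)$, so $f\le f(0)$ everywhere. We want to rule out $f(0)$ being tiny. Monotonicity on $[0,\infty)$ alone does not prevent mass from escaping to infinity, so the variance has to be used. By Chebyshev, $\Prob\{\abs X\le 1\}\ge 1-\tfrac13=\tfrac23$. Since $f\le f(0)$, this gives
$$
\tfrac23\le\int_{-1}^1 f\le 2f(0),
$$
hence $f(0)\ge \tfrac13$.

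\emph{Step 2: propagation away from the origin.} Choose $x_0=2$. Chebyshev gives $\Prob\{\abs X\ge 2\}\le \tfrac1{12}$. This alone does not bound $f$ from below at small points, so log-concavity is needed, specifically a comparison between $f(x)$ and $f(0)$ for $0\le x\le a$.

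By convexity of $V$ on the segment $[0,y]$ with $y>x>0$,
$$
V(x)\le (1-x/y)V(0)+(x/y)V(y),
$$
that is, $f(x)\ge f(0)^{1-x/y}f(y)^{x/y}$. This requires a lower bound on $f(y)$ at some $y$ of order one.

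A cleaner route argues by contradiction. Suppose $f(a)<\epsilon f(0)$. By log-concavity and monotonicity, $f(x)\le f(0)\,\epsilon^{x/a}$ for $x\ge a$. The tail decays geometrically on scale $a/\log(1/\epsilon)$, while the central part satisfies $f\le f(0)$ on $[-a,a]$. Therefore
$$
1=\int f\le 2af(0)+2f(0)\frac{a}{\log(1/\epsilon)}.
$$
We also need $f(0)$ bounded above. For $f(0)$, use the standard fact that an even log-concave density satisfies $f(0)^2\operatorname{Var}\le 1/2$; this follows since the maximal value of $f(0)^2\operatorname{Var}$ is attained by the uniform density, giving $f(0)^2\operatorname{Var}\le \tfrac1{12}$. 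Hence $f(0)\le 1/2$.

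Then $1\le a\bigl(1+1/\log(1/\epsilon)\bigr)$, which fails when $a=1/4$ and $\epsilon=e^{-1}$. So $f(1/4)\ge e^{-1}f(0)\ge 1/(3e)$. Monotonicity on $[0,1/4]$ and evenness then give \eqref{eq:central-density} with $a=1/4$ and $b=1/(3e)$.

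\emph{Main obstacle.} The main obstacle is the upper bound on $f(0)$. If I prefer not to cite it, I would prove $f(0)\le C$ directly. By monotonicity, $f\ge f(0)/2$ on $[-\delta,\delta]$, where $\delta$ is the half-level point. The log-concave tail beyond $\delta$ decays at least like $2^{-x/\delta}$, so the variance is $O(\delta^2)$. Since $\int f=1$, we have $\delta\gtrsim 1/f(0)$. Combined, the variance is at least of order $\delta^2\gtrsim f(0)^{-2}$, so $\operatorname{Var}=1/3$ forces $f(0)\lesssim 1$. The constants need not be explicit, since the statement only asks for universal $a,b$.
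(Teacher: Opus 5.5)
Your architecture matches the paper's. You use Chebyshev for $f(0)\ge 1/3$, then an upper bound on $f(0)$, then a contradiction from the secant-slope tail bound $f(x)\le f(0)\epsilon^{x/a}$ for $x\ge a$ (the paper uses the discrete version $f(ka)\le M2^{-k}$). The Chebyshev step and the mass computation $1\le 2af(0)\bigl(1+1/\log(1/\epsilon)\bigr)$ are correct. The gap is at the step you flag yourself, the upper bound on $f(0)$.

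Among even log-concave densities, the uniform density \emph{minimizes} $f(0)^2\operatorname{Var}$, with value $1/12$. The maximizer is the two-sided exponential $\frac12e^{-\abs x}$, with value $1/2$. For variance $1/3$ the correct consequence is therefore $1/2\le f(0)\le\sqrt{3/2}$. Your bound $f(0)\le 1/2$ fails for every non-uniform density; for instance, the Gaussian of variance $1/3$ has $f(0)=\sqrt{3/(2\pi)}\approx 0.69$. With the true bound, your inequality at $a=1/4$, $\epsilon=e^{-1}$ only gives $1\le\sqrt{3/2}$, which is no contradiction, so $a=1/4$ is not established.

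Your fallback sketch has the same orientation problem. Combining ``$\delta\gtrsim 1/f(0)$'' with ``variance at least of order $\delta^2$'' yields $1/3\gtrsim f(0)^{-2}$, which is a \emph{lower} bound on $f(0)$.

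The repair is short, and there are two ways to do it.
\begin{itemize}
\item Use the correct fact $f(0)^2\operatorname{Var}\le 1/2$, i.e. $f(0)\le\sqrt{3/2}$, and take $a=1/8$. Then $4af(0)\le\sqrt{3/2}/2<1$ gives the contradiction, and $f(x)\ge f(1/8)\ge f(0)/e\ge 1/(3e)$ for $\abs x\le 1/8$.
\item Prove the bound directly with your ingredients oriented correctly. From $f>f(0)/2$ on $[0,\delta)$ and $\int f=1$ you get $\delta\le 1/f(0)$. From $f\le f(0)$ and $f(x)\le f(0)2^{-x/(\delta+\varepsilon)}$ for $x\ge\delta+\varepsilon$ you get $1/3=\operatorname{Var}\le Cf(0)\delta^3\le Cf(0)^{-2}$.
\end{itemize}

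The second version is exactly the paper's argument. It uses the $1/e$-level point $r$, obtains $r\le e/(2M)$ from the mass and $\frac13\le 5Mr^3$ from the tail, and concludes $M=f(0)<8$. It then runs your contradiction with $\epsilon=1/2$ and $a=1/64$.

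As in the paper, you should also pass to the upper semicontinuous representative of $f$, so that $f(0)$ is meaningful and $0<f(0)=\max f$.
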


\begin{proof}[Proof of Lemma~\ref{lem:central-density}]
Let $Z$ be a random variable with density $f$. We prove that the universal choices $a = 1/64$ and $b = 1/6$ are admissible. Replacing $f$ at the endpoints of its support by its upper semicontinuous log-concave representative does not change the probability measure. Since $f$ is an integrable nonzero log-concave density with positive finite variance, its upper semicontinuous representative satisfies $0<f(0)<\infty$. Since $f$ is even and log-concave, it is nonincreasing on $[0,\infty)$. Put $M = f(0)$, and use the extended-real convention $\log0 = -\infty$.

Chebyshev's inequality and $\E Z^2 = 1/3$ give
$$
\Prob\{\abs Z>1\}\le\frac13,\quad \Prob\{\abs Z\le1\}\ge\frac23.
$$
Since $f(x)\le M$ for every $x$,
$$
\frac23\le\int_{-1}^1f(x)dx\le2M,
$$
and hence
\begin{align} \label{eq:M-lower-app}
M\ge\frac13.
\end{align}

Define
$$
r = \sup\{x\ge0:f(x)>M/e\}.
$$
The set is nonempty because $f(0) = M$, and $r<\infty$ because $f$ is integrable. Since $f(x)>M/e$ for $0\le x<r$,
$$
\frac12 = \int_0^\infty f(x)dx\ge\int_0^r f(x)dx\ge\frac{Mr}{e}.
$$
Thus $r\le e/(2M)<3/(2M)$.

Fix $\varepsilon>0$. By the definition of $r$, $f(r+\varepsilon)\le M/e$. Concavity of $\log f$ on the support implies that for $x\ge r+\varepsilon$,
$$
f(x)\le M\exp\left(-\frac{x}{r+\varepsilon}\right).
$$
If $f$ vanishes before $x$, the same estimate is automatic. Using $f\le M$ on $[0,r+\varepsilon]$ and the change of variables $u = x/(r+\varepsilon)$ on the tail,
$$
\begin{aligned}
\frac13 = \E Z^2& = 2\int_0^\infty x^2f(x)dx \\
&\le2M\int_0^{r+\varepsilon}x^2dx+2M\int_{r+\varepsilon}^\infty x^2e^{-x/(r+\varepsilon)}dx \\
& = 2M(r+\varepsilon)^3\left(\frac13+\int_1^\infty u^2e^{-u}du\right)<5M(r+\varepsilon)^3.
\end{aligned}
$$
Letting $\varepsilon\downarrow0$ and using $r<3/(2M)$ gives
$$
\frac13<5Mr^3<\frac{135}{8M^2}.
$$
Hence $M^2<405/8<64$, so $M<8$. Set $a = 1/64$. Suppose, toward a contradiction, that $f(a)<M/2$. Since $\log f$ is concave and $f(0) = M$, the slopes of its secant lines from the origin are nonincreasing. Therefore, for every integer $k\ge1$,
$$
\log f(ka)-\log M\le k(\log f(a)-\log M)<-k\log2,
$$
and hence $f(ka)\le M2^{-k}$. By monotonicity on $[0,\infty)$,
$$
\frac12 = \int_0^\infty f(x)dx\le Ma+\sum_{k = 1}^\infty\int_{ka}^{(k+1)a}f(x)dx\le Ma+aM\sum_{k = 1}^\infty2^{-k} = 2Ma<\frac14,
$$
a contradiction. Thus $f(a)\ge M/2$. Evenness and monotonicity now imply, for $\abs x\le a$,
$$
f(x)\ge f(a)\ge\frac M2\ge\frac16.
$$
This proves \eqref{eq:central-density} with $a = 1/64$ and $b = 1/6$.
\end{proof}

\subsection{Proof of the block-product theorem}

\begin{proof}[Proof of Theorem~\ref{thm:block-products}]
Let $X\sim\mu_{N_{\mathbf m}}$. For each $k$, the factor
$$
A_{m_k}(x_{B_k}) = \frac2{m_k}\sum_{i\in B_k}x_i-1
$$
takes values in $[-1,1]$ and attains both endpoints. Hence $\norm{P_{\mathbf m,d}}_\infty = 1$. By choosing the signs of the factors so that their product is $1$ or $-1$, we also obtain $\osc(P_{\mathbf m,d}) = 2$. Expanding the product shows that the coefficient of every monomial obtained by choosing one variable from each block is nonzero, so the polynomial has exact degree $d$. Every variable occurs in such a monomial and is therefore active. Since each monomial uses at most one variable from each block, the natural partition $\{B_1,\ldots,B_d\}$ is admissible.

For the upper bound, let $Y_k = A_{m_k}(X_{B_k})$ and $M_k = m_k^{-1/2}$. The random vectors $X_{B_1},\ldots,X_{B_d}$ are independent, so the variables $Y_1,\ldots,Y_d$ are independent. Equation \eqref{eq:A-m-smallball} gives
$$
\Prob\{\abs{Y_k}\le tM_k\}\le\Phi_1(\sqrt2t),\quad t\ge0.
$$
Applying Lemma~\ref{lem:tensorization} with $d_k = 1$ and $a_k = \sqrt2$ yields
$$
\Prob\left\{\left|\prod_{k = 1}^dY_k\right|\le\frac{s}{M_{\mathbf m}}\right\}\le\Phi_d(2^{d/2}s),\quad s\ge0.
$$
This proves the asserted upper estimate.

For the lower bound, define
$$
Z_{m_k,k} = \sqrt{m_k}Y_k = \frac1{\sqrt{m_k}}\sum_{i\in B_k}(2X_i-1).
$$
Each summand $2X_i-1$ is uniform on $[-1,1]$, has mean zero, and has variance $1/3$. Therefore $Z_{m_k,k}$ is centered and has variance
$$
\frac1{m_k}\sum_{i\in B_k}\frac13 = \frac13.
$$
Its density is even. It is also log-concave: for $m_k = 1$ this is the uniform density on $[-1,1]$, and for $m_k\ge2$ it follows from preservation of log-concavity under convolution and dilation \cite{Prekopa73}. Lemma~\ref{lem:central-density} therefore gives universal constants $a,b>0$ such that every $Z_{m_k,k}$ has density at least $b$ on $[-a,a]$.

The variables $Z_{m_1,1},\ldots,Z_{m_d,d}$ are independent. Hence, for $0\le s\le a^d$,
$$
\Prob\left\{\left|\prod_{k = 1}^dZ_{m_k,k}\right|\le s\right\}\ge b^d\vol_d\left\{z\in[-a,a]^d:\left|\prod_{k = 1}^dz_k\right|\le s\right\}.
$$
The set in braces is invariant under changes of signs. Its intersection with each orthant has the same volume, and the scaling $z_k = au_k$ maps its positive-orthant part onto
$$
\left\{u\in[0,1]^d:\prod_{k = 1}^du_k\le s/a^d\right\}.
$$
By Lemma~\ref{lem:Phi-product}, this set has volume $\Phi_d(s/a^d)$. Consequently,
$$
\vol_d\left\{z\in[-a,a]^d:\left|\prod_{k = 1}^dz_k\right|\le s\right\} = (2a)^d\Phi_d(s/a^d).
$$
Since
$$
P_{\mathbf m,d}(X) = M_{\mathbf m}^{-1}\prod_{k = 1}^dZ_{m_k,k},
$$
we obtain
$$
\mu_{N_{\mathbf m}}\left\{\abs{P_{\mathbf m,d}}\le\frac{s}{M_{\mathbf m}}\right\}\ge(2ab)^d\Phi_d(s/a^d),\quad 0\le s\le a^d.
$$
By the ratio limit \eqref{eq:Phi-ratio}, after decreasing the upper range of $s$ there is $c_d>0$ such that
$$
\Phi_d(c_ds)\le(2ab)^d\Phi_d(s/a^d),\quad 0\le s\le s_d.
$$
This proves the lower estimate in \eqref{eq:block-products-two-sided}.

We next prove the optimality of the full block-size vector. Let $\Gamma:\N^d\to(0,\infty)$, and suppose that for some $K_d>0$ every exact degree-$d$ multi-affine polynomial with all variables active and with a prescribed admissible $d$-block partition of sizes $m_1,\ldots,m_d$ satisfies
\begin{align} \label{eq:hypothetical-vector}
\cQ_P(\rho)\le\Phi_d\left(K_d\Gamma(\mathbf m)\frac{\rho}{\osc(P)}\right),\quad \rho\ge0.
\end{align}
Apply this estimate to $P_{\mathbf m,d}$ at radius $\rho = s/M_{\mathbf m}$. The lower estimate in \eqref{eq:block-products-two-sided} and \eqref{eq:block-product-osc} give
$$
\Phi_d(c_ds)\le\Phi_d\left(\frac{K_d\Gamma(\mathbf m)}{2M_{\mathbf m}}s\right),\quad 0<s\le s_d.
$$
Divide by $\Phi_d(s)$ and let $s\downarrow0$. Equation \eqref{eq:Phi-ratio} gives
$$
c_d\le\frac{K_d\Gamma(\mathbf m)}{2M_{\mathbf m}}.
$$
Thus $\Gamma(\mathbf m)\ge2c_dK_d^{-1}M_{\mathbf m}$, proving the stated optimality among uniform estimates with profile $\Phi_d$.

It remains to prove the dimensional obstruction. Take equal blocks $m_1 = \cdots = m_d = m$, write $P_{m,d} = P_{(m,\ldots,m),d}$, and note that $n = dm$. Fix $m$ and put $\rho = m^{-d/2}s$. The centered probability is bounded by the concentration function:
$$
\mu_{dm}\{\abs{P_{m,d}}\le\rho\}\le\cQ_{P_{m,d}}(\rho).
$$
Apply the hypothetical estimate \eqref{eq:hypothetical-alpha}. Since $\osc(P_{m,d}) = 2$, the lower estimate gives, for all sufficiently small $s>0$,
$$
\Phi_d(c_ds)\le\Phi_d\left(\frac{K_{d,q}}2(dm)^\alpha m^{-d/2}s\right).
$$
Divide by $\Phi_d(s)$ and let $s\downarrow0$. The ratio limit \eqref{eq:Phi-ratio} gives
$$
c_d\le\frac{K_{d,q}}2d^\alpha m^{\alpha-d/2}.
$$
This inequality holds for every positive integer $m$. Letting $m\to\infty$ is impossible when $\alpha<d/2$, and therefore $\alpha\ge d/2$.
\end{proof}

\section{Proof of the density theorem} \label{sec:density}

\subsection{Absolute continuity and negative moments}

\begin{proposition}[Absolute continuity] \label{prop:absolute-continuity}
The image of $\mu_n$ under every nonconstant multi-affine polynomial is absolutely continuous with respect to Lebesgue measure on $\R$.
\end{proposition}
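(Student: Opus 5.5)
We must show that for a nonconstant multi-affine $P$ and a Borel set $E\subseteq\R$ with $\vol_1(E)=0$, one has $\mu_n\{P\in E\}=0$. The plan is an induction on the number $n$ of variables, peeling off one active coordinate and applying Fubini.

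After discarding inactive variables, which does not change the image measure, we may assume all variables are active. The case $n=1$ is immediate: $P(x)=ax+b$ with $a\ne0$, so $P(X)$ is uniform on an interval of length $\abs a$.

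For the inductive step, fix the coordinate $x_n$, on which $P$ depends, and write
$$
P(x',x_n)=x_nQ(x')+R(x'),
$$
with $Q$ not identically zero. By Lemma~\ref{lem:zero-set}, the set $\{x':Q(x')=0\}$ is $\mu_{n-1}$-null. For every $x'$ with $Q(x')\ne0$, the map $x_n\mapsto x_nQ(x')+R(x')$ is a nondegenerate affine map of $[0,1]$. The image of Lebesgue measure on $[0,1]$ under it is therefore uniform on an interval, and it gives $E$ measure zero.

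Fubini then yields
$$
\mu_n\{P\in E\}=\int_{[0,1]^{n-1}}\mu_1\{x_n:x_nQ(x')+R(x')\in E\}\,d\mu_{n-1}(x')=0.
$$
Indeed, the integrand vanishes off the null set $\{Q=0\}$. Joint measurability of $\{(x',x_n):P(x',x_n)\in E\}$ is automatic, because $P$ is continuous and $E$ is Borel.

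This argument does not actually need the induction hypothesis: Lemma~\ref{lem:zero-set} already supplies the key fact that $\{Q=0\}$ is null. One could also phrase it through Lemma~\ref{lem:disintegration} with a single coordinate as the block $B=\{n\}$, which gives an explicit density $f_P(t)=\int h_y(t)\,d\nu(y)$. The only point needing care is that a nonconstant $P$ has an active variable whose coefficient polynomial $Q$ is nonzero, and then that $\{Q=0\}$ is null, both of which are immediate. There is no real obstacle; Radon--Nikodym converts null-set preservation into the existence of $f_P$.
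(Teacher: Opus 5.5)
Your proof is correct and is essentially the paper's own argument. You pick an active coordinate, write $P=R(x')+x_nQ(x')$ with $Q\ne0$, use Lemma~\ref{lem:zero-set} to see that $\{Q=0\}$ is null, and conclude by Fubini because every remaining fiber is uniform on a nondegenerate interval. As you observe yourself, the induction framing is unnecessary, and the paper does without it.
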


\begin{proof}
Choose an active coordinate and relabel it as $x_n$. Write
$$
P(x',x_n) = R(x')+x_nQ(x'),
$$
where $Q$ is a nonzero multi-affine polynomial. By Lemma~\ref{lem:zero-set}, $Q(x')\ne0$ for almost every $x'$. Conditional on such an $x'$, the image of the uniform $x_n$ is uniform on the interval with endpoints $R(x')$ and $R(x')+Q(x')$. Its density is $1/\abs{Q(x')}$ on that interval and zero elsewhere, so it assigns zero mass to every Lebesgue-null set. Fubini proves the claim; the exceptional set $\{Q = 0\}$ is null.
\end{proof}

\begin{lemma} \label{lem:negative-moment-Q}
Let $A$ be a finite coordinate set, let $Q:[0,1]^A\to\R$ be a nonzero multi-affine polynomial of exact degree $r\ge1$ with all variables active, and let $\cC\in\Adm(Q)$. For $0<\eta<1$,
\begin{align} \label{eq:negative-moment-Q}
\int_{[0,1]^A}\abs{Q(y)}^{-\eta}d\mu_A(y)\le\left(\frac{ a_r(\cC)}{\norm Q_\infty}\right)^\eta(1-\eta)^{-r}.
\end{align}
\end{lemma}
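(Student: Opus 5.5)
This is a direct consequence of Proposition~\ref{prop:zero-centered} combined with Lemma~\ref{lem:inverse-moments}. Put
$$
Z = \frac{a_r(\cC)\abs{Q(Y)}}{\norm Q_\infty},\quad Y\sim\mu_A.
$$
First, $Z>0$ almost surely. By Lemma~\ref{lem:zero-set} applied to the nonzero multi-affine polynomial $Q$, the zero set $\{Q=0\}$ is $\mu_A$-null.

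Second, $Z$ obeys the product-profile distribution bound. For $t\ge0$, set $\rho = t\norm Q_\infty/a_r(\cC)$. The hypotheses of Proposition~\ref{prop:zero-centered} hold verbatim with the coordinate set $A$ in place of $[n]$: $Q$ is nonzero, has exact degree $r$, all its variables are active, and $\cC\in\Adm(Q)$. The proposition gives
$$
\Prob\{Z\le t\} = \mu_A\{\abs Q\le\rho\}\le\Phi_r\left(a_r(\cC)\frac{\rho}{\norm Q_\infty}\right) = \Phi_r(t).
$$

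Third, Lemma~\ref{lem:inverse-moments} with $d=r$ now yields $\E Z^{-\eta}\le(1-\eta)^{-r}$ for $0<\eta<1$. Unwinding the definition of $Z$,
$$
\int_{[0,1]^A}\abs{Q}^{-\eta}d\mu_A = \left(\frac{a_r(\cC)}{\norm Q_\infty}\right)^{\eta}\E Z^{-\eta},
$$
which is \eqref{eq:negative-moment-Q}.

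No step is genuinely difficult. The points to check are that $\norm Q_\infty>0$, which holds because $Q$ is nonzero, and that the proposition, stated on $[0,1]^n$, transfers to an arbitrary finite coordinate set $A$. The latter is covered by the paper's convention that active variables and admissibility are used verbatim for arbitrary finite index sets.
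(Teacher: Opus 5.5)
Your proof is correct and matches the paper's argument step for step. It uses the same normalized variable $Z$, Lemma~\ref{lem:zero-set} for almost-sure positivity, Proposition~\ref{prop:zero-centered} for $\Prob\{Z\le t\}\le\Phi_r(t)$, and Lemma~\ref{lem:inverse-moments} for the inverse moment bound, and your side checks ($\norm Q_\infty>0$ and the transfer to a general coordinate set $A$) are exactly the points that need noting.
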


\begin{proof}
Lemma~\ref{lem:zero-set} gives $Q(y)\ne0$ for $\mu_A$-almost every $y$. Define the positive random variable
$$
Z = a_r(\cC)\frac{\abs{Q(Y)}}{\norm Q_\infty},\quad Y\sim\mu_A.
$$
For $t\ge0$, Proposition~\ref{prop:zero-centered} gives
$$
\Prob\{Z\le t\} = \mu_A\left\{\abs Q\le\frac{t\norm Q_\infty}{ a_r(\cC)}\right\}\le\Phi_r(t).
$$
Lemma~\ref{lem:inverse-moments} therefore yields $\E Z^{-\eta}\le(1-\eta)^{-r}$. Multiplying by $( a_r(\cC)/\norm Q_\infty)^\eta$ proves \eqref{eq:negative-moment-Q}.
\end{proof}

\subsection{The quantitative estimate}

\begin{proposition}[Recursive density estimate] \label{prop:recursive-density}
Let $P:[0,1]^n\to\R$ be a nonconstant multi-affine polynomial of exact degree $r\ge1$, assume that all variables are active, and let $\cB\in\Adm(P)$. If $X\sim\mu_n$ and $f_P$ is the density of $P(X)$, then, for $1<p<\infty$,
\begin{align} \label{eq:recursive-density}
\norm{f_P}_{L^p(\R)}\le p^{r-1}\left(\frac{ a_r(\cB)}{\osc(P)}\right)^{1-1/p}.
\end{align}
\end{proposition}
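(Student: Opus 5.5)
Let $\eta=1-1/p\in(0,1)$. The plan is to induct on $r$ with the same weighted selection of a fixed contraction used in Proposition~\ref{prop:zero-centered}, and to replace the small-ball fiber bound by the fiber $L^p$ bound \eqref{eq:block-fiber-Lp}. Averaging is done through Lemma~\ref{lem:disintegration}, and the resulting negative moment is controlled by Lemma~\ref{lem:negative-moment-Q}. Absolute continuity is Proposition~\ref{prop:absolute-continuity}. Since subtracting a constant changes neither $f_P$ up to translation nor the quantities in \eqref{eq:recursive-density}, we may shift so that the range of $P$ is $[-\osc(P)/2,\osc(P)/2]$.

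The base case $r=1$ is \eqref{eq:affine-base-density}. Indeed $a_1(\cB)\ge4\sqrt{2N(\cB)}\ge\sqrt{2n}$ and $p^0=1$.

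For $r\ge2$ there is no small-ball point to use. Instead, take $x,z$ to be vertex minimizer and maximizer, so $\abs{P(z)-P(x)}=\osc(P)$. Apply Lemma~\ref{lem:block-slope} with weights $c_C=\sqrt{\abs C}\,a_{r-1}(\cB\setminus\{C\})$. This yields $B,w$ and $Q=Q_{B,w}$ with
$$
\norm Q_\infty\ge \sqrt b\, a_{r-1}(\cB\setminus\{B\})\,\osc(P)/S_{r-1}(\cB).
$$
Write $P(y,Z)=R_B(y)+a_B(y)\cdot Z$. Lemma~\ref{lem:disintegration} and \eqref{eq:block-fiber-Lp} give
$$
\norm{f_P}_{L^p}\le (2b)^{\eta/2}\int_{[0,1]^J}\abs{Q(y)}^{-\eta}\,d\mu_J(y).
$$
This requires $a_B(y)\ne0$ a.e., which holds because $Q\ne0$ and $Q$ is multi-affine (Lemma~\ref{lem:zero-set}), or because $Q$ is a nonzero constant. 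Two cases follow.

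If $Q$ is a nonzero constant, the integral equals $\norm Q_\infty^{-\eta}$. Then $(2b)^{1/2}/\norm Q_\infty\le \sqrt2 S_{r-1}(\cB)/\osc(P)\le a_r(\cB)/\osc(P)$, since $a_{r-1}\ge1$, and the bound follows because $p^{r-1}\ge1$.

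If $Q$ is nonconstant of degree $s\le r-1$, pass to its active set $A$ with the inherited partition $\cC$ from Lemma~\ref{lem:inherited}. Lemma~\ref{lem:negative-moment-Q} together with \eqref{eq:Q-scale-monotone} gives
$$
\int\abs Q^{-\eta}\le \bigl(a_{r-1}(\cB\setminus\{B\})/\norm Q_\infty\bigr)^\eta(1-\eta)^{-s}.
$$
The child scale cancels against the selection lower bound exactly as before:
$$
(2b)^{1/2}a_{r-1}(\cB\setminus\{B\})/\norm Q_\infty\le \sqrt2S_{r-1}(\cB)/\osc(P)\le a_r(\cB)/\osc(P).
$$
Finally $(1-\eta)^{-s}=p^{s}\le p^{r-1}$, which closes the argument. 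This version is not even inductive in the density itself, since the degree loss is absorbed by the negative moment lemma.

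The step I expect to need the most care is justifying the disintegration hypotheses: the integrability of $\norm{h_y}_{L^p}$, which is exactly the finite negative moment above, and the a.e. nonvanishing of $a_B(y)$. The constants are already sharp enough as written, since the factor $1$ in $a_r$ is not needed here.
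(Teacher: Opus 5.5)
Your proposal is correct and follows the paper's proof essentially step for step. It uses the same min/max selection via Lemma~\ref{lem:block-slope} with weights $\sqrt{\abs C}\,a_{r-1}(\cB\setminus\{C\})$, the disintegration bound $\norm{f_P}_{L^p(\R)}\le(\sqrt{2b})^{\eta}\int\abs{Q}^{-\eta}\,d\mu_J$, the constant/nonconstant split with Lemma~\ref{lem:negative-moment-Q}, and the same cancellation of the child scale; as you noticed, neither the induction on $r$ nor the centering shift is needed, since the degree reduction is handled entirely by Lemma~\ref{lem:negative-moment-Q} (which rests on Proposition~\ref{prop:zero-centered}).
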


\begin{proof}
Put $\eta = 1-1/p$. We first treat the affine case. If $r = 1$, Proposition~\ref{prop:affine-base} gives
$$
\norm{f_P}_{L^p(\R)}\le\left(\frac{\sqrt{2n}}{\osc(P)}\right)^\eta.
$$
Since $ a_1(\cB)\ge4\sqrt{2n}$, this is stronger than \eqref{eq:recursive-density}.

Assume $r\ge2$. Since $P$ has exact degree $r$ and $\cB$ is admissible, $\#\cB\ge r$. Define $S_{r-1}(\cB)$ by \eqref{eq:S-recursive}. Choose points $x,z\in[0,1]^n$ at which $P$ attains its minimum and maximum. Apply Lemma~\ref{lem:block-slope} with the weights
$$
c_C = \sqrt{\abs C} a_{r-1}(\cB\setminus\{C\}),\quad C\in\cB.
$$
There are a block $B\in\cB$ and a sign vector $w\in\{-1,1\}^B$ such that, with $b = \abs B$ and $Q = Q_{B,w}$,
\begin{align} \label{eq:Q-lower-density}
\norm Q_\infty\ge\frac{\sqrt b a_{r-1}(\cB\setminus\{B\})}{S_{r-1}(\cB)}\osc(P)>0.
\end{align}
The block $B$, the sign vector $w$, and the contraction $Q$ remain fixed below.

Let $J = [n]\setminus B$. In the notation of \eqref{eq:block-representation},
$$
P(y,z) = R_B(y)+a_B(y)\cdot z,\quad Q(y) = a_B(y)\cdot w,\quad y\in[0,1]^J,\quad z\in[0,1]^B.
$$
Because $Q$ is a nonzero multi-affine polynomial, Lemma~\ref{lem:zero-set} gives $Q(y)\ne0$ for $\mu_J$-almost every $y$. The implication $a_B(y) = 0\Rightarrow Q(y) = 0$ shows that the affine fiber $z\mapsto P(y,z)$ is nonconstant for almost every $y$.

Set
$$
I_Q = \int_{[0,1]^J}\abs{Q(y)}^{-\eta}d\mu_J(y).
$$
If $Q$ is a nonzero constant, then $I_Q = \abs Q^{-\eta}<\infty$. Suppose that $Q$ is nonconstant. Let $A = A_{B,w}\subseteq J$ be its active-variable set, let $s = \deg Q\le r-1$, and let $\cC = \cB_{B,w}$ be the inherited admissible partition of $A$. Since $Q$ depends only on $y_A$ and $\mu_J = \mu_A\otimes\mu_{J\setminus A}$,
$$
I_Q = \int_{[0,1]^A}\abs{Q(y_A)}^{-\eta}d\mu_A(y_A).
$$
Lemma~\ref{lem:negative-moment-Q} gives
\begin{align} \label{eq:density-negative-moment}
I_Q\le\left(\frac{ a_s(\cC)}{\norm Q_\infty}\right)^\eta(1-\eta)^{-s}<\infty.
\end{align}
Thus $I_Q<\infty$ in both cases.

Apply Lemma~\ref{lem:disintegration} with $\mathsf Y = [0,1]^J$, $\nu = \mu_J$, $R = R_B$, and $a = a_B$, and let $h_y$ be the resulting conditional density. By \eqref{eq:block-fiber-Lp},
$$
\int_{[0,1]^J}\norm{h_y}_{L^p(\R)}d\mu_J(y)\le(\sqrt{2b})^\eta I_Q<\infty.
$$
The finiteness condition in Lemma~\ref{lem:disintegration} is therefore satisfied, and Minkowski's integral inequality gives
\begin{align} \label{eq:density-Minkowski}
\norm{f_P}_{L^p(\R)}\le\int_{[0,1]^J}\norm{h_y}_{L^p(\R)}d\mu_J(y)\le(\sqrt{2b})^\eta I_Q.
\end{align}

Suppose first that $Q$ is a nonzero constant. By \eqref{eq:Q-lower-density} and $ a_{r-1}(\cB\setminus\{B\})\ge1$,
$$
\frac{\sqrt{2b}}{\norm Q_\infty}\le\frac{\sqrt2S_{r-1}(\cB)}{\osc(P)}.
$$
Equation \eqref{eq:recursive-ak} gives $ a_r(\cB)\ge1+2\sqrt2S_{r-1}(\cB)\ge\sqrt2S_{r-1}(\cB)$. Hence \eqref{eq:density-Minkowski} proves \eqref{eq:recursive-density} in the constant case.

Now assume that $Q$ is nonconstant. By Lemma~\ref{lem:recursive-scale} and $s\le r-1$,
$$
 a_s(\cC)\le a_{r-1}(\cC)\le a_{r-1}(\cB\setminus\{B\}).
$$
Moreover, $1-\eta = 1/p$, so $(1-\eta)^{-s} = p^s\le p^{r-1}$. Combining \eqref{eq:density-Minkowski}, \eqref{eq:density-negative-moment}, and \eqref{eq:Q-lower-density}, the child scale cancels and gives
$$
\norm{f_P}_{L^p(\R)}\le p^{r-1}\left(\frac{\sqrt2S_{r-1}(\cB)}{\osc(P)}\right)^\eta.
$$
Since $ a_r(\cB)\ge1+2\sqrt2S_{r-1}(\cB)\ge\sqrt2S_{r-1}(\cB)$, this proves \eqref{eq:recursive-density}.
\end{proof}

\begin{proof}[Proof of Theorem~\ref{thm:main-density}]
Absolute continuity is Proposition~\ref{prop:absolute-continuity}. The estimate \eqref{eq:main-density} follows from Proposition~\ref{prop:recursive-density} and Lemma~\ref{lem:recursive-scale}(3).
\end{proof}

\subsection{Optimal \texorpdfstring{$p$}{p}-growth and the rearrangement consequence}

For $M_d(x) = x_1\cdots x_d$, Lemma~\ref{lem:Phi-product} gives \eqref{eq:monomial-density-intro}. Therefore, for $p>1$,
\begin{align} \label{eq:monomial-Lp}
\norm{f_{M_d}}_{L^p(\R)}^p = \frac{\Gamma(p(d-1)+1)}{((d-1)!)^p}.
\end{align}
Indeed, the change of variables $u = \log(1/t)$ converts the $p$th power of the density into the gamma integral. Stirling's formula gives
\begin{align} \label{eq:monomial-Lp-asymp}
\norm{f_{M_d}}_{L^p(\R)}\asymp_dp^{d-1}, \quad p\ge2.
\end{align}
For $d\ge2$, the density is not bounded.

\begin{proof}[Proof of Proposition~\ref{prop:block-density}]
The natural admissible partition of $P_{\mathbf m,d}$ has exactly $d$ blocks. Hence
$$
 w_d(\{B_1,\ldots,B_d\}) = dM_{\mathbf m}.
$$
The upper estimate in \eqref{eq:block-density} follows from Theorem~\ref{thm:main-density} and $\osc(P_{\mathbf m,d}) = 2$.

For the lower estimate, let $p\ge2$ and put $\eta = 1-1/p$. By H\"older's inequality, for every $\rho>0$,
\begin{align} \label{eq:Holder-density-lower}
\Prob\{\abs{P_{\mathbf m,d}(X)}\le\rho\} = \int_{-\rho}^{\rho}f_{\mathbf m,d}(t)dt\le(2\rho)^\eta\norm{f_{\mathbf m,d}}_{L^p(\R)}.
\end{align}
By \eqref{eq:Phi-asymp-intro}, after decreasing $s_d$ there is $b_d>0$ such that
$$
\Phi_d(c_ds)\ge b_ds\log^{d-1}(1/s),\quad 0<s\le s_d,
$$
with the convention $\log^0(1/s) = 1$ when $d = 1$. Choose $A_d>0$ so large that $e^{-2A_d}\le s_d$, and set $s = e^{-A_dp}$. Then $s\le s_d$, and Theorem~\ref{thm:block-products} with $\rho = s/M_{\mathbf m}$ gives
$$
\Prob\left\{\abs{P_{\mathbf m,d}(X)}\le\frac{s}{M_{\mathbf m}}\right\}\ge b_ds\log^{d-1}(1/s).
$$
Using this in \eqref{eq:Holder-density-lower} yields
$$
\norm{f_{\mathbf m,d}}_{L^p(\R)}\ge c_dM_{\mathbf m}^\eta s^{1/p}\log^{d-1}(1/s) = c_de^{-A_d}A_d^{d-1}p^{d-1}M_{\mathbf m}^\eta.
$$
Absorbing the fixed degree-dependent factor proves the lower estimate.
\end{proof}

\begin{proof}[Proof of Corollary~\ref{cor:rearrangement}]
A change of variables gives
$$
\norm g_{L^p(\R)} = K^{1/p-1}\norm{f_P}_{L^p(\R)}\le C_dp^{d-1},\quad p>1.
$$
For a nonnegative function, Chebyshev's inequality gives $g^{*}(s)\le s^{-1/p}\norm g_{L^p(\R)}$. If $0<s<1$, choose $p = \log(e/s)>1$. Then $s^{-1/p}\le e$, which proves \eqref{eq:main-rearrangement}. For $s = 1$, use $g^{*}(1)\le\norm g_{L^1(\R)} = 1$. The monomial density \eqref{eq:monomial-density-intro} shows that the logarithmic power cannot be reduced.
\end{proof}

\section{Proof of the Remez application} \label{sec:remez}

\subsection{The quotient-form inequality}

\begin{proof}[Proof of Application~\ref{thm:main-remez}]
Let
$$
\alpha = \essinf_\Omega P,\quad \beta = \esssup_\Omega P.
$$
For every $c\in\R$,
$$
\esssup_\Omega\abs{P-c}\ge\max\{\beta-c,c-\alpha\}\ge\frac{\beta-\alpha}{2}.
$$
For the midpoint $c = (\alpha+\beta)/2$, the inequalities $\alpha\le P\le\beta$ hold almost everywhere on $\Omega$, so $\abs{P-c}\le(\beta-\alpha)/2$ almost everywhere. Thus equality is attained and
\begin{align} \label{eq:rOmega-midpoint}
r_\Omega(P) = \frac12\left(\esssup_\Omega P-\essinf_\Omega P\right).
\end{align}
Since $\theta>0$, Lemma~\ref{lem:zero-set} implies $r_\Omega(P)>0$: otherwise $P = c$ almost everywhere on a set of positive measure. Up to a null set,
$$
\Omega\subseteq\{\abs{P-c}\le r_\Omega(P)\}.
$$
Theorem~\ref{thm:main-smallball} therefore gives
$$
\theta\le\Phi_d\left(C_d w_d(\cB)\frac{r_\Omega(P)}{\osc(P)}\right).
$$
Inverting $\Phi_d$ proves \eqref{eq:main-remez}. Lemma~\ref{lem:inverse-Phi} gives \eqref{eq:main-remez-log}. Equation \eqref{eq:w-coarse-intro} gives the $\sqrt n\sigma(\cB)^{d-1}$ form, and \eqref{eq:sigma-cs} gives the fixed-block form. If $\cB = \{B_1,\ldots,B_d\}$, the identity $ w_d(\cB) = d\prod_{j = 1}^d\sqrt{\abs{B_j}}$ gives the estimate for exactly $d$ blocks.
\end{proof}

The quotient formulation is invariant under adding constants to $P$, which is the natural symmetry of an all-center concentration inequality.

\begin{proof}[Proof of Corollary~\ref{cor:remez-sup}]
Put $M_\Omega = \norm P_{L^\infty(\Omega,\mu_n)}$. Choose $x_0\in\Omega$ outside the null set on which the essential bound $\abs{P(x)}\le M_\Omega$ may fail. For every $y\in[0,1]^n$,
$$
\abs{P(y)}\le\abs{P(y)-P(x_0)}+\abs{P(x_0)}\le\osc(P)+M_\Omega.
$$
Also, taking $c = 0$ in \eqref{eq:rOmega} gives $r_\Omega(P)\le M_\Omega$. Let
$$
F = C_d w_d(\cB)\frac{\log^{d-1}(e/\theta)}{\theta}.
$$
Application~\ref{thm:main-remez} gives $\osc(P)\le FM_\Omega$. Since $ w_d(\cB)\ge1$ by Lemma~\ref{lem:weighted-identities}(1) and $\log(e/\theta)/\theta\ge1$, we may enlarge $C_d$ so that $F\ge1$. Hence
$$
\norm P_{L^\infty([0,1]^n)}\le(F+1)M_\Omega\le2FM_\Omega,
$$
which is \eqref{eq:main-remez-sup} after changing the degree-dependent constant.
\end{proof}

\subsection{The lower model}

\begin{proof}[Proof of Proposition~\ref{prop:remez-model}]
Equation \eqref{eq:theta-two-sided} follows from Theorem~\ref{thm:block-products}, after renaming the degree-dependent constants. Choosing the center zero in \eqref{eq:rOmega} gives
$$
r_{\Omega_{\mathbf m,s}}(P_{\mathbf m,d})\le\frac{s}{M_{\mathbf m}},
$$
while $\osc(P_{\mathbf m,d}) = 2$. The lower inequality in \eqref{eq:theta-two-sided} implies $\Phi_d^{-1}(\theta_{\mathbf m,s})\ge a_ds$, and hence
$$
\frac{\osc(P_{\mathbf m,d})}{r_{\Omega_{\mathbf m,s}}(P_{\mathbf m,d})}\ge\frac{2M_{\mathbf m}}s\ge2a_d\frac{M_{\mathbf m}}{\Phi_d^{-1}(\theta_{\mathbf m,s})}.
$$
Thus \eqref{eq:remez-model} holds with $\kappa_d = 2a_d$.
\end{proof}

\section*{Acknowledgments}

The authors are grateful to Sasha Volberg for reading earlier drafts and for valuable remarks and comments.

AI-assisted tools, including ChatGPT, were used for literature searches, consistency and editorial checks, and in developing a lower-bound example. The authors are responsible for all mathematical arguments, references, and the final text.

\end{document}